\numberwithin{equation}{section}
\newtheorem{lemma}{Lemma}[section]
\newtheorem{defn}[lemma]{Definition}
\newtheorem{thm}[lemma]{Theorem}
\newtheorem{cor}[lemma]{Corollary}
\theoremstyle{remark}
\newtheorem{rmk}[lemma]{\bf Remark}
\theoremstyle{definition}
\newcommand{\N}{\mathbb{N}}
\newcommand{\Z}{\mathbb{Z}}
\newcommand{\R}{\mathbb{R}}
\newcommand{\C}{\mathbb{C}}
\newcommand{\ra}{\rightarrow}
\newcommand{\da}{\downarrow}
\newcommand{\HD}{\mathcal{H}}
\newcommand{\dist}{\text{dist}}
\newcommand{\ntlim}{\text{n.t.} \lim}
\newcommand{\sphere}{\mathbb{S}}
\title{Non-local distance functions and geometric regularity}
\author{Max Engelstein, Cole Jeznach, and Svitlana Mayboroda}
\thanks{M. Engelstein was partially supported by NSF DMS 2000288. C. Jeznach was partially supported by the Simons Collaborations in MPS grant 563916, SM and M.E.'s NSF grant 2000288. S. Mayboroda was partially supported by Simons Collaborations in MPS grant 563916, SM, and NSF DMS 1839077. Part of the writing of this paper took place while the second and third authors were in residence at the Hausdorff Research Institute for Mathematics (HIM) in Bonn, Germany, during the winter trimester program of 2022. They thank the Institute for its hospitality. Finally, the authors would like to thank Dmitriy Bilyk for pointing out Theorem 4.1 from \cite{HELGASON} which enabled us to prove Corollary \ref{rmk:dist_exact_codim_2}.
}
\subjclass[2010]{Primary: 28A75, 35J70. Secondary: 42B20}
\keywords{Regularized distance, uniform rectifiability, square function estimates} 
\address{School of Mathematics, University of Minnesota,University of Minnesota, Minneapolis, MN, 55455, USA.}
\email{mengelst@umn.edu} \email{jezna001@umn.edu} \email{svitlana@math.umn.edu}
\date{\today}
\begin{document}

\begin{abstract}
We establish the equivalence between the regularity (rectifiability) of sets and suitable estimates on the oscillation of the gradient for smooth non-local distance functions. A prototypical example of such a distance was introduced, as part of a larger PDE theory, by David, Feneuil, and Mayboroda in \cite{DFM18}. The results apply to all dimensions and co-dimensions, require no underlying topological assumptions, and provide a surprisingly rich class of analytic characterizations of rectifiability. 
\end{abstract}

\maketitle

\tableofcontents


\section{Introduction}
\label{sec:introduction}

Some of the major efforts at the interface between geometric measure theory, analysis, and partial differential equations in the past 20-30 years have been devoted to the characterization of regularity of sets in terms of their PDE and analytic properties. This program, starting with the work of F. and M. Riesz in the plane \cite{RR} in 1916, has recently achieved the characterization of uniform rectifiability in terms of the absolute continuity of harmonic measure \cite{AHMMT20} and the solution of the David-Semmes conjecture, a characterization of uniform rectifiability in terms of the Riesz transform \cite{NTVRIESZ1, MMV96}. One of the key achievements on this path was undoubtedly the introduction of the ``correct" notion of regularity, uniform rectifiability, amenable to such scale-invariant characterizations, by David and Semmes in the early 90s, along with their first characterization, the so-called Uniform Square Function Estimate (USFE) in \cite{DSUR}. All in all such characterizations are still scarce, notoriously challenging, and many are compelling open problems. 

Aside from the Riesz transform, very few singular integral operators are known to characterize uniform rectifiability of $(n-1)$-dimensional sets  (see, e.g. \cite{Merchan} for a non-perturbative example in the plane and \cite{PPT} for some perturbative results).  The counterexamples are rare as well, but it is known that for some Calder\'on-Zygmund kernels the existence of principle values or the $L^2$-boundedness of the associated operator does not characterize uniform rectifiability (cf. \cite{HUOVINEN01}, \cite{DCANTOR01}, and \cite{MSURVEY97}).  No characterization of uniform rectifiability using square functions  pertaining to different kernels are available to date, though there are some results on the boundedness of square functions for other kernels assuming uniform rectifiability (e.g. \cite{HOFMANNMEMOIRS} and \cite{MMPAMS}).

Moreover, in the context of lower dimensional sets, these questions are of great interest and almost completely open.  On the singular integral side,  Jaye and Nazarov have introduced a beautiful approach via reflectionless measures, aiming to extend the singular integral characterizations  to any dimension and co-dimension  (see, e.g. \cite{JN1, JN2}), but unfortunately, checking the initial condition of their theory even for the Riesz transform is still unattainable.  As a result, the Riesz transform characterization of $d$-rectifiability, $1 < d<n-1$, remains one of the outstanding open problems, and the straightforward analogue of the USFE is known to fail \cite{DSUR}. The analogue of the harmonic measure characterization for $d<n-1$ fails too, and in fact, the only known PDE characterization of uniform rectifiability of lower dimensional sets is the recently obtained estimate in terms of the appropriate Green functions in \cite{DMGREEN}. 

The goal of the present paper is a new characterization of uniform rectifiability in terms of the generalized regularized distance function. On one hand, it could be seen as parallel to the Riesz transform characterization in \cite{NTVRIESZ1} or even in \cite{TPV08}, since a non-tangential limit of the regularized distance function formally, for some values of parameters, looks like the Riesz transform, although in the full generality it is non-linear, non-local, and not a traditional singular integral operator.  On the other hand, the theory developed here is parallel to the USFE, although once again, the resemblance is formal. This theory also, quite magically, connects to PDEs: the regularized distance function is the Green function with a pole at infinity for a certain special degenerate differential operator \cite{DEMMAGIC}, not to mention its major role in the newly emerged elliptic theory for domains with lower dimensional boundaries (see, e.g., \cite{DFM18, DMURAINFTY, DMGREEN}). One could also say that it is an alternative characterization altogether: the oscillations of the gradient of the regularized distance are more reminiscent of curvature than any of the above. Most importantly, it applies to all dimensions and co-dimensions. 

The first step in this direction was taken by the first and the third author together with Guy David in 2018 \cite{DEMMAGIC}, where the appropriate characterizations in terms of  
\begin{equation}
D_{\mu, \alpha}(x)  \equiv \left( \int \dfrac{1}{|x-y|^{d+\alpha}} \; d\mu(y) \right)^{-1/\alpha}, \; \; x \in \R^n, \label{def_reg_dist}
\end{equation}
were achieved. The notion of $D_{\mu, \alpha}$ itself was first introduced in \cite{DFM18} by David, Feneuil, and the third author. If $\mu$ is $d$-Ahlfors regular (cf. \eqref{e:AR}), then $D_{\mu, \alpha}$ is smooth away from $E$ but also acts like a distance to the support of $\mu$ in the sense that $D_{\mu,\alpha}(x) \simeq \mathrm{dist}(x, \mathrm{spt }\, \mu)$. As we alluded to above, in \cite{DEMMAGIC}  it was also shown that for a special ``magic" value of $\alpha$, the distance given by \eqref{def_reg_dist} is in fact the Green function with pole at infinity for $- \text{div} \left(D_{\mu, \alpha}^{-n+d+1} \nabla \; \cdot \right)$, outside of {\it any} Ahlfors regular set. The consequences of this fact are numerous and powerful and still being explored (see, e.g. \cite{F21AC, Polina}). Relatedly, applications of these regularized distances to free boundary problems have recently been discovered in \cite{DESVGT}.

The present paper has started with the natural question for which other kernels $K$ one can characterize rectifiability of a $d$-dimensional set, $d\leq n-1$, using a generalized distance 
\begin{equation}\label{e:regdistk}D_{K, \mu,\alpha}(x) \equiv R_{K, \mu, \alpha}(x)^{1/\alpha}  \equiv \left(\int \dfrac{K(x-y)}{|x-y|^{d+\alpha}} \; d\mu(y)\right)^{-1/\alpha}. 
\end{equation}
However, as we hopefully described above, such questions are far from innocent. The exact properties of the kernel needed for a {\it characterization} are extremely delicate, which is why very few substitutes for the classical Riesz transform are known (e.g. \cite{Merchan, PPT}), and none of them applies to lower dimensional sets. This is also why it is so hard to check that a given operator satisfies the reflectionless condition of Jaye and Nazarov. Moreover, we cannot even draw an analogy with these few ``good" singular integral operators as the cancellations of the kernel responsible for quantifying the geometry of the sets by singular integrals are very different from the behavior our kernels in our distance functions, which are necessarily non-degenerate, hence, emphatically avoiding cancellations. Yet, the present paper achieves a rich and comprehensive theory.

In this paper, to our surprise, and in contrast to the situation for Calder\'on-Zygmund kernels, in every dimension and co-dimension we produce examples of kernels $K$, which are not perturbations of constants, such that the oscillation of $|\nabla D_{K,\mu,\alpha}|$ characterizes uniform rectifiability. More precisely, using a novel functional-analytic argument, we are able to give examples of distance functions that are equal to the regularized distance $D_{\mu, \alpha}$ whenever $\mu$ is flat, but which a priori may act very differently outside of general measures, see Theorem~\ref{t:main} below. In addition, we establish a perturbative theory, showing that if $K$ is close to $K'$, a kernel with good behavior outside of flat sets, then the oscillation of $|\nabla D_K|$ also characterizes geometric regularity. A complete description of the kernels for which the oscillation of $|\nabla D_K|$ characterizes geometric regularity  is complicated by the aforementioned construction of a large family of ``good" kernels $K'$ (see, e.g. Theorem \ref{t:nonradialnt}).  Nonetheless, in the radial setting we are indeed able to establish a description of {\it all} kernels that characterize rectifiability -- once again, note the difference with the singular integral operator results where only a few examples and counterexamples are available. 

To more precisely discuss our work, we now introduce some definitions.  

In what follows, we always take $\mu$ to be a $d$-Ahlfors regular measure on $\R^n$ with $0 < d < n$, not necessarily an integer. That is, $\mu$ is a measure for which there is a uniform constant $C>0$ such that
\begin{align}\label{e:AR}
C^{-1} R^d & \le \mu(B(Q,R)) \le C R^d,
\end{align}
for each $Q \in \text{spt }\mu$ and every $R >0$. Given such a $\mu$, a number $\alpha > 0$ and a function $K \in C(\mathbb R^n \setminus \{0\})$ we define the regularized distance to $\mathrm{spt}\, \mu =: E$ according to formula \eqref{e:regdistk} above. In particular,  
\begin{equation}\label{e:rkdef}
R_K(x) = R_{K, \mu, \alpha}(x) = \int \dfrac{K(x-y)}{|x-y|^{d+\alpha}} \; d\mu(y),\end{equation} so that $D_{K,\mu,\alpha} = R_{K,\mu,\alpha}^{-1/\alpha}$. When $K \equiv 1$ these are the regularized distances \eqref{def_reg_dist} introduced in \cite{DFM18} and further studied in \cite{DEMMAGIC}. In the latter work it was important that $D_{1, \mu,\alpha}(x) \simeq \mathrm{dist}(x, E)$ and that $D$ was smooth on $\mathbb R^n \backslash E$ with appropriate estimates. To guarantee that those properties also hold for $D_{K, \mu,\alpha}$ we impose the following conditions on the kernel $K$.

\begin{defn}
We say that a positive function $K \in C^2(\R^{n} \setminus \{0\})$ is a distance-standard kernel if 
$$
    \|  \nabla^m K(x) |x|^m  \|_\infty < \infty, \quad \mbox{for} \quad m = 0, 1, 2,
$$
and 
$$\inf_{x \in \R^n \setminus\{0\} } K(x) > 0.$$
For such functions, we say that the distance-standard constant associated to $K$ is 
\begin{align*}
    \max \{ \|K\|_\infty, \|\nabla K(x) |x| \|_\infty, \|\nabla^2 K(x) |x|^2 \|_\infty, \|1/K\|_\infty \}.
\end{align*}
\end{defn}
Using a dyadic shells argument one can see that $D_{K, \mu, \alpha}(x) \simeq \mathrm{dist}(x, E)$ with constants depending on $n, d, \alpha$, the Ahlfors regularity constant of $\mu$ and the distance-standard constant of $K$. Furthermore, one can differentiate under the integral to show that $D_{K, \alpha, \mu} \in C^2(\mathbb R^n \backslash E)$. To simplify notation we write $\Omega = \R^n \setminus E$ and denote by $\delta_E(x) = \dist(x ,E)$ the Euclidean distance to the set $E$. Also, we often drop the dependence of $D_{K, \mu, \alpha}$ on $\mu$ and $\alpha$ when clear from context, and instead write $D_K$ or $D_{K, \mu}$. Respectively, we often denote the original regularized distance $D_{\mu, \alpha}$ by $D_1$ or $D_{1, \mu}$. Whenever $E \subset \R^n$ is a $d$-plane, we take $\mu = \HD^d|_E$ unless otherwise specified.

As mentioned above, we hope to characterize geometric regularity by the oscillation of $|\nabla D_{K,\mu, \alpha}|$. Following \cite{DEMMAGIC} we measure this oscillation in two ways. The first one pertains to the existence of non-tangential limits.
\begin{defn}\label{ntlimits}
For $Q \in E, R >0$ and $\eta \in (0, 1)$ we let 
\begin{align*}
    \Gamma_{R, \eta}(Q) := \{ x \in \Omega \cap B(Q,R) \; ; \; \text{dist}(x, E) \ge \eta|x-Q| \}.
\end{align*}
We say that $f$ has a non-tangential limit, $L$, at $Q \in E$ if there is some $\eta \in (0,1)$ such that 
\begin{align*}
    \lim_{R \da 0} \sup_{x \in \Gamma_{R, \eta}(Q)} |f(x) - L| = 0.
\end{align*}
In this setting, we write $\ntlim_{x \ra Q}^\eta f(x) = L$.
\end{defn}

To address the second one, we introduce 
\begin{align}
F_K(x) \equiv F_{K, \mu, \alpha}(x) & := \delta_E(x) \left | \nabla \left| \nabla D_K(x) \right|^2 \right|, \; x \in \Omega.
\end{align}
The quantity $F_K$ measures the oscillation of $|\nabla D_K|$ in a scale-invariant way. For a general $d$-Ahlfors regular measure $\mu$, $F_K$ is merely a bounded continuous function. One of the main results of this paper is that regularity (uniform rectifiability) of the boundary of $\Omega$ is equivalent to an enhanced estimate controlling oscillations of $|\nabla D_K|$ through the following Carleson measure condition on $F_K$:
 \begin{equation}\label{e:CMC}
 \sup_{Q\in E} \sup_{r > 0} \frac{1}{r^d}\int_{B(Q,r)\cap \Omega} F_K(x)^2 \delta_E(x)^{-n+d}\, dx < \infty.
 \end{equation}
 
Following \cite{DEMMAGIC} and inspired by \cite{DSUR} we refer to condition \eqref{e:CMC} as the USFE (usual square function estimate). 

Let us now carefully define rectifiability and uniform rectifiability.
\begin{defn}
A set $E \subset \R^n$ is said to be $d$-rectifiable for $d \in \N$ if there exist countably many Lipschitz maps $f_j: \R^d \ra \R^n$ such that 
\begin{align*}
\HD^d\Bigl( E \setminus \bigcup_j f_j(\R^d)\Bigr) & = 0.
\end{align*}
If $\mu$ is a Radon measure on $\R^n$, then we say that $\mu$ is $d$-rectifiable if $\mu \ll \HD^d$ and there is a $d$-rectifiable Borel subset $E \subset \R^n$ with $\mu(\R^n \setminus E) =0$. 
\end{defn}
\begin{defn}
A $d$-Ahlfors regular set $E \subset \R^n$ is said to be $d$-uniformly rectifiable for $d \in \N$ if there exist uniform constants $M, \theta >0$ such that for each $Q \in E$ and each $R >0$ there is a Lipschitz map $f:B(0,R) \subset \R^d \ra \R^n$ with Lipschitz norm $\le M$ such that
\begin{align*}
\HD^d( E \cap B(Q, R) \cap f(B(0,R))) \ge \theta R^d.
\end{align*}
If $\mu$ is $d$-Ahlfors regular measure on $\R^n$, then we say that $\mu$ is $d$-uniformly rectifiable if its support is $d$-uniformly rectifiable.
\end{defn}

Theorem 1.5 in \cite{DEMMAGIC} says that a set $E$ is rectifiable if and only if $|\nabla D_{1, \mathcal H^d|_E, \alpha}|$ has non-tangential limits almost-everywhere on $E$. One could draw the aforementioned parallels between this result and the Riesz transform characterization, in particular, \cite{TPV08}, as 
$$ \nabla D_{1, \mathcal H^d|_E, \alpha}= -\frac 1\alpha \left( \int \dfrac{1}{|x-y|^{d+\alpha}} \; d\mu(y) \right)^{-1/\alpha-1} \int \dfrac{x-y}{|x-y|^{d+\alpha+2}} \; d\mu(y). $$
Setting formally $\alpha=-1$ above and properly re-interpreting the integrals would transform the latter term into the classical Riesz transform. However, our $\alpha$ is always a positive number, so that the resultant expression, while analogous, is actually a quite surprising extension of the concept of the Riesz transform  (note that the expression for $\alpha >0$ is nonlinear, and does not represent a Calder\'on-Zygmund singular integral). In a similar vein, inspired by work of David and Semmes  \cite{DSUR} on square functions, \cite[Theorem 1.4]{DEMMAGIC} states that a set $E$ is uniformly rectifiable if and only if $F_{1,\mathcal H^d|_E, \alpha}$ satisfies a Carleson measure estimate outside of $E$. 

\vskip 0.08in

Here, we ask for which $K$ do the following hold:
\begin{align}
& \mu \text{ is } d\text{-rectifiable} \text{ if and only if the non-tangential limits of } |\nabla D_K| \text{ exist $\mu$ a.e. in }E, \label{intro:q1} \\
&\mu \text{ is } d\text{-uniformly rectifiable} \text{ if and only if } D_K \text{ satisfies the USFE on } \Omega \label{intro:q2}. 
\end{align}
We first show in Section \ref{sec:class} that the answers to \eqref{intro:q1} and \eqref{intro:q2} are both yes whenever \begin{equation}\label{e:distexact} D_{K, \mathcal H^d|_E, \alpha}(x) = c \delta_E(x), \qquad \forall x\in \mathbb R^n\backslash E, \qquad \forall E \in G(n,d).\end{equation} 
Here, and throughout the paper, we use $G(n,d)$ to denote the Grassmannian of $d$-dimensional planes through the origin in $\R^n$. We use $A(n,d)$ to denote $d$-dimensional affine sets in $\R^n$. 

Our main result is that there is a large family of distance standard kernels which satisfy the above relation:

\begin{thm}[Main Theorem]\label{t:main}
For each pair of integers $d < n$, and every $\alpha > 0$ there exists a non-constant smooth distance-standard kernel, $K$, which satisfies \eqref{e:distexact}. In particular, the  characterizations of rectifiability by \eqref{intro:q1} and the characterization of uniform rectifiability by \eqref{intro:q2} are both valid for such $K$.

Furthermore, $K$ may be chosen to be far from being constant in the sense that $K_\lambda(x) := K(\lambda x)$ converges to a non-constant kernel in $C^1_{\mathrm{loc}}(\mathbb R^n \backslash 0)$ as $\lambda \downarrow 0$. 

On the other hand if $K$ is invariant under rotations then $K$ satisfies \eqref{e:distexact} if and only if $K$ is a constant.
\end{thm}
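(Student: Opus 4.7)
I would tackle the rotation-invariant case first, since it reduces to a concrete Abel equation. Assume $K(x)=k(|x|)$ is radial, fix $E\in G(n,d)$ and $x\in \R^n\setminus E$ with $t=\delta_E(x)$. By the $O(n-d)$-symmetry of the integral within $E^\perp$, $R_K^{(E)}(x)$ depends only on $t$. Writing the defining integral in polar coordinates on $E$, \eqref{e:distexact} becomes
\[
\omega_{d-1}\int_0^\infty \frac{k\bigl(\sqrt{r^2+t^2}\bigr)\,r^{d-1}}{(r^2+t^2)^{(d+\alpha)/2}}\,dr \;=\; c^{-\alpha}\,t^{-\alpha},\qquad t>0,
\]
and the substitutions $v=r^2+t^2$, $u=t^2$, $\Psi(v):=k(\sqrt v)/v^{(d+\alpha)/2}$ convert this into the Abel/Riesz fractional equation
\[
\int_u^\infty \Psi(v)\,(v-u)^{(d-2)/2}\,dv \;=\; C\,u^{-\alpha/2},\qquad u>0.
\]
Mellin-transforming in $u$ diagonalizes the operator with nonvanishing symbol $B(s,d/2)$, so $\widetilde\Psi$ is uniquely determined; the unique solution is the power $\Psi(v)=c\,v^{-(d+\alpha)/2}$, equivalently $k\equiv\mathrm{const}$. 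This proves the last sentence of the theorem.

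For the main existence claim I would linearize around $K\equiv 1$. By linearity of $K\mapsto R_K$, the set of kernels satisfying \eqref{e:distexact} is the affine subspace $\{1\}+\mathcal V$, where
\[
\mathcal V := \bigl\{K_0\in C^2(\R^n\setminus\{0\}) : R_{K_0}^{(E)}(x)\,\delta_E(x)^\alpha \text{ is constant in } (E,x)\bigr\};
\]
any nonzero bounded smooth $K_0\in\mathcal V$ yields a distance-standard kernel $K=1+\varepsilon K_0$ (small $\varepsilon>0$) satisfying \eqref{e:distexact}, whence \eqref{intro:q1}-\eqref{intro:q2} follow from Section~\ref{sec:class}. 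A direct change of variables shows $\mathcal V$ is invariant under $K_0\mapsto K_0\circ R$ for every $R\in O(n)$, so it decomposes under the spherical-harmonic projections indexed by degrees $m\geq 0$. The radial rigidity just proved says that the radial ($m=0$) elements of $\mathcal V$ are exactly the constants, so the task reduces to producing a nonzero element of $\mathcal V$ in some $m\geq 1$ component.

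To produce such an element I would use the ansatz $K_0(x)=\phi(|x|)\,Y_m(x/|x|)$ with $Y_m$ a nonzero spherical harmonic of degree $m\geq 1$. Expanding the harmonic-polynomial extension of $Y_m(u-y)$ in the bi-grading along $E^\perp\oplus E$, odd-in-$y$ terms vanish on integration over $E$ by parity, and $O(E)$-averaging the remainder collapses $R_{K_0}^{(E)}(x)\,\delta_E(x)^\alpha$ into a finite sum of products $(\text{polynomial in }(u,E))\times(\text{Abel-type integral in }\phi)$. The condition $K_0\in\mathcal V$ thus reduces to a finite system of Abel/Riesz integral equations on the single profile $\phi$, one equation per irreducible $O(n)$-piece of the $(u,E)$-polynomials. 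The crux of the proof -- and where the ``novel functional-analytic argument'' promised in the introduction must enter -- is to show that this system is simultaneously solvable by a nonzero smooth $\phi$ for every $\alpha>0$, every $d<n$, and some choice of $m$ and $Y_m$. Mellin-transforming each Abel equation reduces its compatibility to the vanishing of $\widetilde\phi$ at a discrete set of complex parameters, and a Fredholm-type dimension-counting on the resulting Mellin multipliers should yield an infinite-dimensional space of admissible $\phi$. Within this space I would select a $\phi$ for which the limit $\phi(0^+)=a\neq 0$ exists; then
\[
K_\lambda(x)=1+\varepsilon\,\phi(\lambda|x|)\,Y_m(x/|x|)\ \xrightarrow[\lambda\downarrow 0]{C^1_{\mathrm{loc}}(\R^n\setminus\{0\})}\ 1+\varepsilon\,a\,Y_m(x/|x|),
\]
a non-constant smooth kernel, completing the ``far from constant'' property. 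The distance-standard bounds on $K$ follow for small $\varepsilon$ from uniform $C^2$-estimates on $\phi$ and $Y_m$.
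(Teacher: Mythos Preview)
Your treatment of the radial rigidity is reasonable and in the same spirit as the paper's, though the mechanism differs: the paper reduces the Abel-type identity \eqref{rdocond} by differentiation (even $d$) and by Wiener--Tauberian theory on the multiplicative group (odd $d$), whereas you propose a single Mellin-transform argument. That is a legitimate alternative, but you should be careful that $\Psi\in L^1$-type spaces needed for the Mellin transform are not given a priori --- one only knows $k\in L^\infty$ --- so the step ``Mellin-transforming in $u$ diagonalizes the operator'' requires justification (this is precisely why the paper invokes Wiener's theorem on a closed translation-invariant subspace of $L^1$ rather than a bare Fourier/Mellin inversion).

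The existence half, however, has a genuine gap. Your spherical-harmonic ansatz $K_0(x)=\phi(|x|)Y_m(x/|x|)$ is a natural first attempt, but the passage ``the crux of the proof \ldots\ is to show that this system is simultaneously solvable \ldots\ a Fredholm-type dimension-counting on the resulting Mellin multipliers should yield an infinite-dimensional space of admissible $\phi$'' is an assertion, not an argument. You neither write down the system of Abel equations nor verify that its Mellin symbols have common zeros; in fact your own radial computation shows the $m=0$ symbol is \emph{nonvanishing}, and there is no reason offered why higher-$m$ symbols behave differently. The paper avoids this entirely: it never attempts to solve the integral equations. Instead it shows by a Hahn--Banach separation argument (Theorem~\ref{dist_orth_existence}) that the weak-$*$ closed span in $(C_0(\R^n\setminus\{0\}))^*$ of the measures $|x|^{-d-\alpha}\,d\HD^d|_E$ cannot contain any Dirac mass $\delta_{x_0}$, hence its pre-annihilator in $C_0$ is nontrivial. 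This soft argument produces a continuous distance-orthogonal $K$ with no structural ansatz at all; smoothness and the decay needed for the ``far from constant'' claim are then obtained a posteriori by averaging over dilations and rotations (Lemmas~\ref{rad_smooth}, \ref{tang_smooth}) and a dyadic superposition (Theorem~\ref{lem:dist_exact_nv}). Your plan for the ``far from constant'' conclusion --- choosing $\phi(0^+)\neq 0$ --- is contingent on the unproven solvability step, so it inherits the same gap.
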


We are then able to show that the answers to questions \eqref{intro:q1}, \eqref{intro:q2} are yes if and only if $D_K$ is ``close to" a $D_{K'}$ which satisfies \eqref{e:distexact}. In the radial setting, where the only kernels which satisfy \eqref{e:distexact} are constants, measuring ``closeness" is relatively straightfoward:

\begin{thm}
Suppose that $K$ is radial and distance-standard. Then for any $d$-Ahlfors regular $\mu$ which is $d$-rectifiable in $\mathbb R^n$ and $\alpha > 0$ the non-tangential limits of $|\nabla D_{K, \mu,\alpha}|$ exist for $\mu$ a.e. in $E = \text{spt } \mu$ (for cones of every aperture) if and only if $K_\lambda  = K(\lambda \, \cdot) \ra c$ in $C^1_{loc}(\R^n \setminus \{0\})$ as $\lambda \da 0$ for some constant $c >0$.

Conversely, if for any $d$-Ahlfors regular measure $\mu$, the non-tangential limits of $|\nabla D_{K, \mu,\alpha}|$ exist for $\mu$ a.e. in $E = \text{spt } \mu$ (for every aperture), then $\mu$ must be $d$-rectifiable and $K$ must satisfy  $K_\lambda  = K(\lambda \, \cdot) \ra c$ in $C^1_{loc}(\R^n \setminus \{0\})$ as $\lambda \da 0$ for some constant $c >0$.\label{thm:rad_ntlintro}
\end{thm}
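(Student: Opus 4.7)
The plan is to handle the two directions separately using the scaling identity
\[
|\nabla D_{K,\mu,\alpha}(x)| \;=\; |\nabla D_{K_r,\mu_{Q,r},\alpha}(z_x)|,\qquad z_x = (x-Q)/r,\quad r=\delta_E(x),
\]
with $\mu_{Q,r}=r^{-d}(T_{Q,r})_\#\mu$ for $T_{Q,r}(y)=(y-Q)/r$, and $K_r(w)=K(rw)$. This identity, verified by a routine change of variables in the defining integrals, converts the non-tangential behavior of $|\nabla D_K|$ near $Q\in E$ into joint rescaling behavior of $(K_r,\mu_{Q,r})$ as $r\downarrow 0$.

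For the sufficiency direction, I would fix $Q$ at which $\mu$ admits an approximate tangent plane $P_Q$, which holds $\mu$-a.e.\ by rectifiability, so that $\mu_{Q,r}\rightharpoonup \mathcal H^d|_{P_Q}$. A near-field/far-field split of the defining integrals, applied to the differences $R_{K_r}-R_c$ and $\nabla R_{K_r}-\nabla R_c$, uses the hypothesis $K_r\to c$ in $C^1_{loc}$ on the near-field ($|z_x-z|\le M$) and the Ahlfors regularity of $\mu_{Q,r}$ combined with the uniform distance-standard bounds on the far-field ($|z_x-z|>M$) to conclude
\[
\bigl|\nabla D_{K_r,\mu_{Q,r},\alpha}(z_x)\bigr|-\bigl|\nabla D_{c,\mu_{Q,r},\alpha}(z_x)\bigr|\longrightarrow 0
\]
uniformly over $z_x$ in the non-tangential annulus $\{1\le |z|\le 1/\eta\}$. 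Since $D_c=c^{-1/\alpha}D_1$, the existence of the non-tangential limit of $|\nabla D_K|$ at $Q$ then reduces to that of $|\nabla D_{1,\mu}|$, which is guaranteed $\mu$-a.e.\ by the rectifiability characterization in \cite{DEMMAGIC}; the uniformity in $z_x$ yields aperture-independence.

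For the converse, I first apply the hypothesis to $\mu=\mathcal H^d|_P$ for $P\in G(n,d)$. The family $\{K_r\}_{r>0}$ is precompact in $C^1_{loc}(\R^n\setminus\{0\})$ by Arzel\`a--Ascoli, since the distance-standard bounds (which control the second derivatives via $|\nabla^2 K_r(w)||w|^2$) transfer uniformly to the rescalings. Let $K_*$ be any subsequential $C^1_{loc}$-limit along $r_n\downarrow 0$. Applying the scaling identity along the perturbed sequence $r_n/t$ for arbitrary $t>0$ yields $K_{r_n/t}(w)=K_{r_n}(w/t)\to K_*(w/t)$ in $C^1_{loc}$; the corresponding non-tangential limit value, computed on the flat plane via the scaling identity, equals $|\nabla D_{K_*,\mathcal H^d|_P,\alpha}|(0,1/t)$. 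Equating these values as $t$ varies forces $|\nabla D_{K_*,\mathcal H^d|_P,\alpha}|(0,s)$ to be independent of $s>0$. Integrating in $s$ then gives $D_{K_*,\mathcal H^d|_P,\alpha}(x)=c'\delta_E(x)$ on $\R^n\setminus P$; varying $P\in G(n,d)$, which is legitimate because the radial $K_*$ is rotationally equivariant, we deduce that $K_*$ satisfies \eqref{e:distexact}, so the radial rigidity clause of Theorem \ref{t:main} forces $K_*$ to be a positive constant $c$. Uniqueness of $c$ across subsequences follows from the strict monotonicity of $c\mapsto c^{-1/\alpha}|\nabla D_{1,\mathcal H^d|_P,\alpha}|$, so in fact $K_r\to c$ in $C^1_{loc}$. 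Rectifiability of the original $\mu$ then follows from the same near/far-field comparison as in the sufficiency direction: $|\nabla D_K|-|\nabla D_c|\to 0$ non-tangentially, so the non-tangential limits of $|\nabla D_{1,\mu}|$ exist $\mu$-a.e., and \cite{DEMMAGIC} gives rectifiability.

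The main obstacle is the middle step of the converse, namely upgrading each subsequential limit $K_*$ of $\{K_r\}$ to a constant. The key idea is that the scaling identity allows one to freely rescale the subsequence via $r_n\mapsto r_n/t$, so the bare existence of a non-tangential limit at a single point of $P$ enforces the \emph{exact} identity $D_{K_*,\mathcal H^d|_P,\alpha}=c'\delta_E$ on $\R^n\setminus P$; only then does the radial rigidity clause of Theorem \ref{t:main} come in to conclude that $K_*$ must be a positive constant.
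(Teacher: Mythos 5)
Your sufficiency direction is sound and takes a genuinely different route from the paper: instead of passing to a joint blow-up limit $(K_\infty,\mu_\infty)$ via Lemma \ref{kernellemma} and identifying the non-tangential limit with $|\nabla D_{K_\infty,\mu_\infty}(X_\infty)|$, you compare $D_{K_r}$ directly to $D_c$ against the same rescaled measure and reduce to the constant-kernel case of \cite{DEMMAGIC}. The near/far-field split works (the far field is $O(M^{-\alpha})$ by Ahlfors regularity; the near field uses $K_\lambda\to c$ in $C^1_{loc}$ together with $|x|\,|\nabla K(x)|\to 0$ as $|x|\to 0$), and the uniformity over the non-tangential annulus gives aperture-independence cleanly. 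Likewise, your scale-perturbation trick $r_n\mapsto r_n/t$, combined with radial symmetry of $K_*$, is a legitimate substitute for the paper's device of varying the aperture and the approach points to reach every $X_\infty\in\mathbb R^n\setminus V$; both force $|\nabla D_{K_*,\mathcal H^d|_P}|$ to be constant off $P$.

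The genuine gap is in the converse, and it is a circularity. The hypothesis of the second paragraph concerns one given $d$-Ahlfors regular measure $\mu$; you are not entitled to ``apply the hypothesis to $\mu=\mathcal H^d|_P$'' for planes $P$, since nothing is assumed about flat measures. Your only other route to flat blow-up measures --- taking tangent measures of $\mu$ itself --- requires already knowing $\mu$ is rectifiable, but your proof of rectifiability requires first knowing $K_r\to c$. The paper breaks this circle in Theorem \ref{thm:ntl_rect}: blowing up at a point where non-tangential limits exist for every aperture, one finds that $|\nabla D_{K_\infty,\mu_\infty}|$ is a nonzero constant on $\Omega_\infty$ for an arbitrary subsequential limit $K_\infty$ (merely distance-standard, not yet identified), and \cite[Corollary 3.2]{DEMMAGIC} then forces $E_\infty$ to be a $d$-plane with no information about $K_\infty$ required. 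Rectifiability comes first; only afterwards are the tangent measures known to be flat, at which point your identification of the kernel limits as constants (or Theorem \ref{thm:rect_ntl}) applies. You should restructure the converse in this order. The same issue, in milder form, affects your ``only if'' direction for rectifiable $\mu$: blow up the given $\mu$ at a point of flat approximate tangency (which exists $\mu$-a.e.\ by rectifiability) rather than invoking the hypothesis for $\mathcal H^d|_P$.
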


For kernels which are not rotationally invariant the analogue of Theorem \ref{thm:rad_ntlintro} is less clean due to the fact that $K(\lambda_j\, \cdot)$ could approach different kernels $K'$ satisfying \eqref{e:distexact} along different sequences $\lambda_j \downarrow 0$. In fact we construct such an example in Theorem \ref{t:nonradialnt}.

\vskip 0.08 in

Pertaining to question \eqref{intro:q2}, we work with a Dini-type condition:
 \begin{thm}
\sloppy Let $ 0 < d < n$ not necessarily an integer, and let $\alpha >0$. Suppose that $K \in C^3(\R^n \setminus \{0\})$ is radial, distance-standard, and $\nabla^3 K(x) |x|^3 \in L^\infty(\R^n)$. If $D_K$ satisfies the USFE, then $d$ is an integer and $\mu$ is $d$-uniformly rectifiable.

Conversely, if we assume in addition that $K$ is such that 
\begin{align}
\int_0^1 \left(t^{m}  \dfrac{d^m}{dt^m} \left(K(t) - K_0 \right) \right)^2  \; \dfrac{dt}{t} + \int_1^\infty \left(t^{m}  \dfrac{d^m}{dt^m} \left(K(t) - K_\infty \right) \right)^2  \; \dfrac{dt}{t} < \infty \label{cond:rad_usfe-intro}
\end{align}
for some constants $K_0, K_\infty >0$ and for $m=0,1,2$, then $D_K$ satisfies the USFE in $\Omega = \R^n \setminus \text{spt } \mu$ for any $d$-uniformly rectifiable measure $\mu$. \label{thm:rad_usfe}
\end{thm}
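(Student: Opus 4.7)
\emph{Necessity} (USFE $\Rightarrow d \in \mathbb{N}$ and $\mu$ is $d$-UR). My plan follows the paradigm of \cite{DEMMAGIC}: argue via tangent measures. Lebesgue-differentiating the Carleson condition at $\mu$-a.e.\ $Q_0 \in E$ along a sequence of scales $r_j \downarrow 0$, the blow-ups $\mu_{Q_0, r_j}$ converge (subsequentially) to a tangent measure $\nu$, and the rescaled USFE integrand vanishes in the limit. Under the scaling $x \mapsto (x - Q_0)/r_j$, the kernel $K$ effectively transforms to $K^{r_j}(s) := K(s/r_j)$, and using $\nabla^m K(x)|x|^m \in L^\infty$ for $m \le 3$ together with the radial form of $K$ one sees that $K^{r_j} \to K_\infty := \lim_{s \to \infty} K(s)$ in $C^2_{\mathrm{loc}}(\mathbb R^n \setminus \{0\})$. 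Since $K_\infty$ is a positive constant, the vanishing of the USFE integrand for $\mu_{Q_0, r_j}$ with kernel $K^{r_j}$ transfers to $F_{1, \nu, \alpha} \equiv 0$ on $\mathbb R^n \setminus \mathrm{spt}\,\nu$, which by the rigidity underlying Theorem~1.4 of \cite{DEMMAGIC} forces $\nu = c\,\mathcal H^d|_P$ for a $d$-plane $P$; in particular $d \in \mathbb N$. The quantitative version of this blow-up argument, driven by the uniform Carleson bound, yields flat approximations of $E$ at most scales (the David--Semmes BAUP condition), hence $d$-UR of $\mu$.

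\emph{Sufficiency for flat measures.} For $\mu = \mathcal H^d|_P$ with $P$ a $d$-plane, the radial symmetry of $K$ gives $D_K(x) = g(\delta_E(x))$ with
\[
g(t) = t\left(\int_{\mathbb R^d} K\!\left(t\sqrt{|u|^2+1}\right)(|u|^2+1)^{-(d+\alpha)/2}\, du\right)^{-1/\alpha},
\]
so that $F_K(x) = 2\,\delta_E(x)\,|g'(\delta_E(x))\,g''(\delta_E(x))|$. A Fubini computation then reduces the flat-measure USFE to the one-dimensional estimate $\int_0^\infty t\, g'(t)^2\, g''(t)^2\, dt < \infty$. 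Writing $K = K_0 + (K - K_0)$ near $t = 0$ and $K = K_\infty + (K - K_\infty)$ near $t = \infty$ inside the defining integral for $g$, the derivatives $g', g''$ split into a constant piece (yielding zero, since $g$ is linear when $K$ is constant) plus error pieces that are weighted averages of $t^m\,\frac{d^m}{dt^m}(K - K_{0/\infty})$ for $m \le 2$; the $L^2$-Dini hypothesis \eqref{cond:rad_usfe-intro} then controls these via Minkowski's inequality.

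\emph{Sufficiency for $d$-UR $\mu$, and the main obstacle.} To extend to general $d$-UR $\mu$ I would invoke the David--Semmes corona decomposition of $E = \mathrm{spt}\,\mu$ into coherent stopping-time regions, each well approximated at each scale by a $d$-plane, together with a Carleson family of top cubes. On the Whitney region of a cube $Q$ in a coherent region, $E$ is $\varepsilon_Q$-close to a plane $P_Q$ at scale $\ell(Q)$; writing $F_{K,\mu} \le F_{K,\nu_Q} + |F_{K,\mu} - F_{K, \nu_Q}|$ with $\nu_Q = \mathcal H^d|_{P_Q}$, the first term is controlled by the flat case above, while the second is handled by pointwise comparison of the integrals defining $R_{K, \mu}$ and $R_{K, \nu_Q}$. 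The core technical difficulty is showing that $|F_{K,\mu} - F_{K, \nu_Q}|$ is Carleson-summable: splitting $R_{K,\mu} - R_{K, \nu_Q}$ into near ($|x-y| \lesssim \ell(Q)$, controlled by the flatness parameter) and far ($|x-y| \gtrsim \ell(Q)$, controlled by the distance-standard bounds together with the Dini decay of $K - K_\infty$) contributions produces, at each dyadic scale $2^k\ell(Q)$, a bound involving the $L^2$-Dini norms of $K - K_\infty$ at that scale; ensuring the resulting double sum over cubes and scales is bounded---so that the finite-order ($m \le 2$) hypothesis \eqref{cond:rad_usfe-intro} suffices rather than producing a logarithmic divergence---is where the bulk of the work lies.
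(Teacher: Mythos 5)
There is a genuine gap, concentrated in the sufficiency direction, which is the substantive content of this theorem. Your plan for passing from flat measures to general $d$-UR measures stops exactly at the step you yourself flag as ``where the bulk of the work lies,'' and the quantity you propose to sum is not the right one. Controlling $\int_0^\infty t\,g'(t)^2 g''(t)^2\,dt$ only bounds $F_K$ outside planes; to make the comparison $|F_{K,\mu}-F_{K,\nu_Q}|$ summable over cubes and scales one needs uniform-over-planes $L^2$ control on Whitney shells of $\delta_V^{\alpha+m}\nabla^m\bigl(R_{K,V,\alpha}-c\,\delta_V^{-\alpha}\bigr)$ for all $m=0,1,2$ (the paper's $\gamma$-functional, Definition~\ref{defn:gamma}), since the difference of the $F$'s is estimated through products of first and second derivatives of $R_K$. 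The paper's route is: Lemma~\ref{lemma:pert_char} shows the Dini hypothesis \eqref{cond:rad_usfe-intro} implies $\int_0^\infty\gamma_{K,2,\alpha}(r)^2\,dr/r<\infty$; Theorem~\ref{T1} then runs the comparison not through a corona decomposition but through Tolsa's $\alpha$-numbers, proving the pointwise bound \eqref{thm:usfe_gen_e_ineq} with flat measures $\nu(x)$ chosen constant on Whitney balls, so that the ``error'' term is Carleson by \eqref{alpha_cm} and the ``flat'' term sums to $\sum_j\gamma^2(2^{-j}R)\lesssim\int_0^\infty\gamma^2\,dr/r$. Without identifying the $\gamma$-functional (or an equivalent), your double sum over cubes and scales has no mechanism to close, which is precisely the logarithmic-divergence danger you mention.

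There is also a concrete error in your necessity sketch. Blowing up at $Q_0$ with $r_j\downarrow 0$ rescales the kernel to $K(r_j\,\cdot)$, so the relevant behavior is that of $K$ near the origin, not at infinity; moreover, a distance-standard radial kernel need not have a limit at either end (e.g.\ $2+\sin(\log|x|)$), so the assertion that $K^{r_j}$ converges to the constant $\lim_{s\to\infty}K(s)$ is unjustified. This is repairable: the paper takes only subsequential Arzela--Ascoli limits $K_{r_j}\to K_\infty$ in $C^2_{\mathrm{loc}}$, and the rigidity statement (\cite[Corollary 3.2]{DEMMAGIC}) needs only that $|\nabla D_{K_\infty,\mu_\infty}|$ is a nonzero constant with $D_{K_\infty,\mu_\infty}\simeq\delta_{E_\infty}$ --- no constancy of $K_\infty$ is required --- to conclude $E_\infty$ is a $d$-plane. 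Finally, your a.e.\ tangent-measure argument yields rectifiability but not uniform rectifiability; the paper's actual necessity proof (Theorem~\ref{thm:weakusfe_ur}) goes through the weak USFE, the Carleson bad sets $\mathcal B_M(3\epsilon)$ of Lemma~\ref{whitneylemma}, and the BWGL, and you would need to carry out that quantitative step rather than appeal to it in one sentence.
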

In the setting of uniform rectifiability, we are unable to prove that the integral condition \eqref{cond:rad_usfe-intro} on $K$ is sharp. This is due to our inability to quantify the non-existence of other radial kernels satisfying \eqref{e:distexact} in Theorem \ref{t:main}. However, we are able to show that $K$ must converge to appropriate constants near zero and infinity (see Theorem \ref{thm:necforusfe}). We also have results in the perturbative regime for non-radial $K$ (e.g. Lemma \ref{lemma:general_suff}). 

Let us conclude by outlining the remaining sections of the paper. In Section \ref{sec:class}, we study properties of distance-exact kernels (i.e. kernels satisfying \eqref{e:distexact}). In particular we show that for these kernels the answers to \eqref{intro:q1} and \eqref{intro:q2} are yes and we prove Theorem \ref{t:main}. The existence portion of Theorem \ref{t:main} is proven using a functional analytic approach followed by a smoothing argument. The construction of distance exact kernels which are far from being constant is proven using a scaling construction. Finally, we show that distance-exact radial kernels must be constants using Wiener-Tauberian theory.  

In Section \ref{sec:perturbative} we attempt to answer \eqref{intro:q1} and \eqref{intro:q2} using a perturbative analysis. In terms of non-tangential limits, \eqref{intro:q1}, we obtain a complete answer in Theorems \ref{thm:rect_ntl} and \ref{t:nonradialnt}. This is done using blowup arguments, specifically tangent measures. We also show by construction that our results are sharp without radial symmetry, see Theorem \ref{t:nonradialnt}. 

We then continue on to \eqref{intro:q2}, and develop sufficient conditions on $K$ to guarantee that $D_K$ satisfies the USFE for all planes, and from there, for all $d$-uniformly rectifiable measures. Our main result is Theorem \ref{T1}, which identifies a uniform condition on the growth of $D_K$ for the distance to satisfy the USFE outside of uniformly-rectifiable sets. This condition seems painful to check in practice but we give other simpler conditions on $K$ which guarantee that $D_K$ satisfies the uniform growth condition, for example, Lemma \ref{lemma:pert_char}, which shows that radial kernels satisfying the Dini-type condition \eqref{cond:rad_usfe} satisfy the uniform growth condition of Theorem \ref{T1}. The main difficulty in proving Theorem \ref{T1} (in contrast with  \cite{DEMMAGIC}) is that for general $K$, $F_{K, \HD^d|_E, \alpha}$ is not necessarily zero for $d$-planes $E$. 

Finally, we conclude Section \ref{sec:perturbative} by showing that under rather weak assumptions on $K$, the USFE with $D_{K, \mu, \alpha}$ implies that $\mu$ is uniformly rectifiable in Theorem \ref{thm:weakusfe_ur}. We also include a short appendix, proving that if $K$ is distance-exact (i.e. satisfies \eqref{e:distexact}), then the oscillations of $|\nabla D_K|$ characterize uniform rectifiability. 

\section{Distance-exact kernels}
\label{sec:class} 
Recall from \cite{DEMMAGIC} that a fundamental property of the regularized distance functions functions, $D_{1, \mathcal H^{d}|_E, \alpha}$, is that they are equal to (a multiple of) Euclidean distance when $E$ is affine. Let us generalize this notion for general kernels $K$:

\begin{defn}
If $K \in C(\R^n \setminus \{0\}) \cap L^\infty(\R^n)$ is a function such that for each $d$-plane $E$, there is a constant $c_E \in \R $ so that 
\begin{align}
D_{K, \HD^d|_ E, \alpha}(x) \equiv c_E \delta_E(x), \label{cond:distance_exact}
\end{align}
then we say that $K$ is $(d,\alpha)$-distance-exact.  If the constants $c_E \equiv 0$ for each $E$, we say that $K$ is $(d,\alpha)$-distance-orthogonal. Finally, if (\ref{cond:distance_exact}) holds for a single $d$-plane $E$, then we say that $K$ is $(d,\alpha)$-distance-exact for $E$.
\end{defn}

Analogously to \cite{DEMMAGIC} we start by showing that if a smooth enough $K$ is distance exact, then the oscillation of $|\nabla D_K|$ characterizes the regularity of $E$. In particular the following theorems hold: 

\begin{thm}\label{thm:dist_exact_ur_usfe}
Let $n,d \in \N$ with $1 \le d < n$, $\mu$ be a $d$-Ahlfors regular measure and let $\alpha >0$. If $K$ is distance-standard and $(d,\alpha$)-distance-exact, then $\mu$ is $d$-rectifiable if and only if for each $\eta \in (0,1)$, $\text{n.t.} \lim_{x \ra Q}^\eta |\nabla D_{K, \mu, \alpha}(x)|$ exists for $\mu$ almost all $Q\in \text{spt } \mu$  (See Definition~\ref{ntlimits} for the precise definition of non-tangential limits).
\end{thm}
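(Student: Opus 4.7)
Both directions can be handled by blowup / tangent-measure arguments, mirroring the special case $K\equiv 1$ from Theorem~1.5 of \cite{DEMMAGIC}, now leveraging the distance-exact hypothesis $D_{K,\HD^d|_P,\alpha}=c_P\delta_P$ for $d$-planes $P$. The core technical input is a continuity statement: if $\mu_j$ is a sequence of $d$-Ahlfors regular measures whose AR constants are uniformly bounded and $\mu_j\rightharpoonup\nu$ weakly, then $D_{K,\mu_j,\alpha}\to D_{K,\nu,\alpha}$ and $\nabla D_{K,\mu_j,\alpha}\to\nabla D_{K,\nu,\alpha}$ in $C^0_{\mathrm{loc}}(\R^n\setminus\mathrm{spt}\,\nu)$. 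This follows by dominated convergence, using the distance-standard pointwise bounds $|\nabla^m K(x)|\lesssim|x|^{-m}$ and the uniform dyadic-shells comparison $D_{K,\mu_j,\alpha}(x)\simeq\delta_{\mathrm{spt}\,\mu_j}(x)$. Combined with the scaling identity $|\nabla D_{K,\mu,\alpha}(Q+rx)|=|\nabla D_{K,\mu_{Q,r},\alpha}(x)|$ for $\mu_{Q,r}:=r^{-d}(T_{Q,r})_*\mu$ and $T_{Q,r}(y)=(y-Q)/r$, this lets us transfer information between $\mu$ at small scales around $Q\in E$ and tangent measures $\nu\in\mathrm{Tan}(\mu,Q)$.

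\textbf{Rectifiable $\Rightarrow$ n.t.\ limits exist.} At $\mu$-a.e.\ $Q\in E$ the rescaled family $\{\mu_{Q,r}\}_{r>0}$ converges weakly, as $r\da 0$, to a unique tangent $\theta_Q\HD^d|_{P_Q}$ with $P_Q$ a $d$-plane through the origin and $\theta_Q>0$ the $d$-density of $\mu$ at $Q$; this is classical for Ahlfors regular rectifiable measures. Fix $\eta\in(0,1)$ and any $x_j\in\Gamma_{r_j,\eta}(Q)$ with $r_j\da 0$; set $y_j:=(x_j-Q)/r_j$, which stays bounded and bounded away from $P_Q$. By the continuity statement and the scaling identity,
\begin{equation*}
|\nabla D_{K,\mu,\alpha}(x_j)| \;=\; |\nabla D_{K,\mu_{Q,r_j},\alpha}(y_j)| \;\longrightarrow\; |\nabla D_{K,\theta_Q\HD^d|_{P_Q},\alpha}(y_\infty)|
\end{equation*}
for any subsequential limit $y_\infty$ of $(y_j)$. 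By the trivial scaling $D_{K,\theta\HD^d|_P,\alpha}=\theta^{-1/\alpha}D_{K,\HD^d|_P,\alpha}$ and the distance-exact hypothesis, the right-hand side equals the constant $\theta_Q^{-1/\alpha}c_{P_Q}$, independent of the subsequence. This gives the non-tangential limit at $Q$ for every aperture $\eta$.

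\textbf{N.t.\ limits $\Rightarrow$ rectifiability; main obstacle.} Conversely, suppose that for every aperture $\eta$ the n.t.\ limit exists at $\mu$-a.e.\ $Q$; by nesting of cones this common value $L(Q)$ is independent of $\eta$. Fix such a $Q$ and any $\nu\in\mathrm{Tan}(\mu,Q)$. Given any $y\in\R^n\setminus\mathrm{spt}\,\nu$, one can choose $\eta$ small enough that $y$ arises (after rescaling) as the limit of a non-tangentially approaching sequence $x_j\to Q$ for $\mu$; by the continuity and scaling identity this forces $|\nabla D_{K,\nu,\alpha}(y)|=L(Q)$. Hence $|\nabla D_{K,\nu,\alpha}|$ is identically the constant $L(Q)$ on $\R^n\setminus\mathrm{spt}\,\nu$. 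The proof then reduces to the rigidity claim: any $d$-AR $\nu$ for which $|\nabla D_{K,\nu,\alpha}|$ is constant off $\mathrm{spt}\,\nu$ must be flat, i.e.\ $\nu=c\HD^d|_P$ for some $d$-plane $P$. Once this is established, the standard characterization of $d$-AR rectifiable measures via flat tangents $\mu$-a.e.\ yields the $d$-rectifiability of $\mu$. This rigidity step is the principal obstacle: for a general distance-exact $K$ one cannot read the planar structure of $\mathrm{spt}\,\nu$ off $|\nabla D_{K,\nu,\alpha}|$ directly, so the argument must exploit conical invariance of tangent measures, iterate by passing to tangents of $\nu$ when necessary, and use distance-exactness as the equality case of a suitable comparison. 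It is precisely the analogue of the rigidity step underlying the $K\equiv 1$ result in \cite{DEMMAGIC}, and is where the substantive work of the theorem will concentrate.
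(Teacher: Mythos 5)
Your architecture (blowups along tangent measures, a continuity lemma for $D_{K,\mu_j}$ under weak convergence, and reduction of the converse to a rigidity statement for the blowup limit) is the same as the paper's, which runs the converse through Theorem \ref{thm:ntl_rect} and the forward direction through the argument of Theorem \ref{thm:rect_ntl}. However, there are two genuine problems. First, your scaling identity $|\nabla D_{K,\mu,\alpha}(Q+rx)|=|\nabla D_{K,\mu_{Q,r},\alpha}(x)|$ is false for a general distance-standard $K$: rescaling the measure forces you to rescale the kernel as well, and the correct identity is $|\nabla D_{K,\mu,\alpha}(Q+rx)|=|\nabla D_{K_r,\mu_{Q,r},\alpha}(x)|$ with $K_r(\cdot)=K(r\,\cdot)$ (this is \eqref{dkscale}; your version would only hold for $0$-homogeneous $K$). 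Consequently your limit computation in the forward direction does not go through as written: one must extract a subsequential $C^1_{\mathrm{loc}}$ limit $K_\infty$ of $K(r_j\,\cdot)$ via Arzel\`a--Ascoli (using the distance-standard bounds), pass to the limit with Lemma \ref{kernellemma}, and then check that $K_\infty$ is still distance-exact \emph{with the same constants} $c_P$ as $K$ — which holds because $R_{K_r,P,\alpha}(x)=r^\alpha R_{K,P,\alpha}(rx)$ shows each $K_r$ is distance-exact with constant $c_P$, and this is preserved in the limit. Only then is the limiting value $\theta_Q^{-1/\alpha}c_{P_Q}$ independent of the subsequence. The conclusion survives, but the step you wrote is not the step that proves it.

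Second, in the converse direction you reduce correctly to the rigidity claim ($|\nabla D_{K_\infty,\nu}|$ constant and $D_{K_\infty,\nu}\simeq\delta_{\mathrm{spt}\,\nu}$ imply $\mathrm{spt}\,\nu$ is a $d$-plane and $\nu$ is flat), but you then declare it "the principal obstacle" and leave it unproved, speculating that one must "iterate by passing to tangents" and "use distance-exactness as the equality case of a comparison." That is not where the work lies: the rigidity is a purely function-theoretic statement about any $C^1$ function comparable to the distance whose gradient has constant modulus, and it is exactly \cite[Corollary 3.2]{DEMMAGIC}; it makes no reference to the kernel and requires no distance-exactness at all (indeed this direction of the theorem, Theorem \ref{thm:ntl_rect}, holds for every distance-standard kernel). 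As submitted, your proof of the converse is therefore incomplete, and the missing ingredient is a citation rather than new substantive work.
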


\begin{thm}\label{thm:dist_exact_ntl_rect}
Let $n,d \in \N$ with $1 \le d < n$, $\mu$ be a $d$-Ahlfors regular measure and let $\alpha >0$.  If $K \in C^3(\R^n \setminus \{0\})$ is $(d, \alpha)$-distance-exact and $\nabla^3K(x) |x|^3 \in L^\infty(\R^n)$, then $D_{K, \mu, \alpha}$ satisfies the USFE if and only if $\mu$ is $d$-uniformly rectifiable.
\end{thm}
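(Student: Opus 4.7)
The plan is to adapt the proof of Theorem 1.4 in \cite{DEMMAGIC} (for the case $K\equiv 1$), isolating the steps that rely on the special structure of $D_{1,\mu,\alpha}$ and checking that the distance-exact hypothesis on $K$ provides a substitute. The starting observation is that \eqref{cond:distance_exact} forces $|\nabla D_K|^2 \equiv c_E^2$ on $\R^n \setminus E$ for every $d$-plane $E$, so $F_{K,\HD^d|_E, \alpha} \equiv 0$ and the USFE \eqref{e:CMC} is trivially satisfied by $\mu = \HD^d|_E$. Both directions then reduce to controlling the error committed when one replaces $\mu$ at scale $r$ around $x$ by the best approximating $d$-plane $P_x$.

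For the sufficiency direction ($\mu$ uniformly rectifiable $\Rightarrow$ USFE), I would run a corona/$\beta$-number decomposition in the style of David--Semmes. For $x \in \Omega$ with $\delta_E(x) = r$, choose a best approximating $d$-plane $P_x$ on scale $Cr$. Using $\nabla^m K(\cdot)\,|\cdot|^m \in L^\infty$ for $m \le 3$ (which lets me differentiate under the integral twice and estimate the resulting kernels with the extra derivative via dyadic shells), I can bound $|\nabla D_K(x,\mu) - \nabla D_K(x,\HD^d|_{P_x})|$ and $|\nabla|\nabla D_K|^2(x,\mu) - \nabla|\nabla D_K|^2(x,\HD^d|_{P_x})|$ pointwise by a Jones-type $\beta_\mu$-coefficient on $B(x, Cr)$. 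Since the plane quantities vanish by distance-exactness, this yields a pointwise estimate $F_K(x,\mu) \lesssim \beta_\mu(B(x, Cr))$, and Carleson summation of $\beta_\mu^2$ under uniform rectifiability (the David--Semmes USFE \cite{DSUR}) gives \eqref{e:CMC} for $F_K^2 \delta_E^{-n+d}$.

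For the necessity direction (USFE $\Rightarrow$ $\mu$ UR), I would argue via a tangent measure/blowup analysis. Scale-invariance of the Carleson integral implies that any weak tangent measure $\nu$ of $\mu$ inherits the same USFE bound for $F_{K,\nu,\alpha}$; passing to tangents-of-tangents produces a $d$-Ahlfors regular $\nu$ on which $F_{K,\nu,\alpha} \equiv 0$, i.e.\ $|\nabla D_{K,\nu,\alpha}|$ is constant on $\R^n \setminus \text{spt}\,\nu$. To conclude that $\nu$ is flat (and hence, via standard Preiss/Mattila/Tolsa-type tangent-measure machinery, that $\mu$ is uniformly rectifiable) I need a rigidity statement: the only $d$-Ahlfors regular measures $\nu$ for which $|\nabla D_{K,\nu,\alpha}|$ is constant on the complement of the support are those supported on a $d$-plane. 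Here I would exploit the distance-standard and $C^3$ bounds to blow down the kernel $K$ as well, reducing to a limiting constant kernel for which the rigidity is exactly the content of the $K\equiv 1$ case proved in \cite{DEMMAGIC}.

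The main obstacle is precisely this rigidity step. For $K\equiv 1$ one has an explicit Riesz-type formula for $\nabla D_1$ that makes the constraint $|\nabla D_1|^2 \equiv \text{const}$ amenable to direct analysis; for a general distance-exact $K$ we have only the qualitative hypotheses in the definition, and the rigidity has to be extracted from the distance-standard bounds together with $C^3$-regularity. I expect the cleanest route is the simultaneous blowdown of $\nu$ and $K$ described above, but one must carefully verify that the Carleson estimate on $F_K$ and the distance-exact identity both survive the limit, which is where the $C^3$ and $\nabla^3 K(x)|x|^3 \in L^\infty$ hypotheses are essential.
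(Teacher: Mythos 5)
Your overall architecture is the same as the paper's: distance-exactness kills the contribution from flat measures, the sufficiency direction becomes a perturbation off planes controlled by Carleson coefficients, and the necessity direction is a blowup/rigidity argument. (The paper simply quotes its general machinery: sufficiency is Theorem \ref{T1} applied with $\gamma_{K,\lambda,\alpha}\equiv 0$, and necessity is Theorem \ref{thm:weakusfe_ur}.) However, two of your concrete steps would fail as written. In the sufficiency direction, the pointwise bound $F_K(x,\mu)\lesssim \beta_\mu(B(x,Cr))$ by a single-scale Jones coefficient cannot hold: $R_{K,\mu}(x)=\int K(x-y)|x-y|^{-d-\alpha}\,d\mu(y)$ is non-local, so the error committed by replacing $\mu$ with a flat measure receives contributions from \emph{every} scale $\ge \delta_E(x)$, with weights decaying only geometrically. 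The correct estimate (the heart of Theorem \ref{T1}) is $F_K(x)\lesssim \delta_E(x)^{-1}\sum_{l\ge 0}2^{-(\alpha+1)l}\alpha(y,2^{l}\delta_E(x))$, where the coefficients are Tolsa's $\alpha$-numbers (Wasserstein distances of $\mu$ to flat \emph{measures}), not $\beta$-numbers measuring only the geometry of the support: what must be controlled is $\int f\,(d\mu-d\nu)$ for Lipschitz $f$, and closeness of supports does not control the density. One then needs the further fact (as in \cite{DFM19}) that the Carleson condition for $\alpha^2$ passes to the square of this weighted sum over scales, together with a consistent choice of approximating planes on Whitney regions.

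In the necessity direction, your route to rigidity --- blow down $K$ to a constant and invoke the $K\equiv 1$ case of \cite{DEMMAGIC} --- does not work: subsequential limits $K_\infty$ of $K(\lambda\,\cdot)$ as $\lambda\downarrow 0$ need not be constant, which is precisely the content of Theorems \ref{t:main} and \ref{t:nonradialnt}. The rigidity actually needed lives at the level of the function rather than the kernel: if $D\simeq\delta_E$ with $|\nabla D|$ constant on each component of $\R^n\setminus E$, then $d\in\N$ and $E$ is a $d$-plane (Corollary 3.2 of \cite{DEMMAGIC}); this applies to $D_{K_\infty,\mu_\infty}$ whatever the limit kernel $K_\infty$ turns out to be, so no reduction to a constant kernel is required. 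Relatedly, a tangent-measure/Preiss-type argument only delivers rectifiability; to obtain \emph{uniform} rectifiability one needs the quantified compactness argument showing that outside a Carleson-prevalent bad set every Whitney region sees $E$ within $\eta$ of a $d$-plane (the bilateral weak geometric lemma of \cite{DSUR}), which is how Theorem \ref{thm:weakusfe_ur} is proved.
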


We present the proofs of Theorems \ref{thm:dist_exact_ur_usfe} and \ref{thm:dist_exact_ntl_rect} in the Appendix \ref{appendix}.  

In view of the above, to find kernels whose associated regularized distance characterizes geometric regularity, it suffices to understand distance-exact kernels. In what follows we first make some observations regarding distance-exact kernels with extra symmetries (i.e. radial or spherical invariance); this is the content of Section \ref{ss:distanceexactsymmetries}. Further explicit computations for zero-homogeneous kernels are left to Appendix \ref{appendix:homog_ker}, since the discussion is slightly tangential to the current one. The main result of this section (proven in Section \ref{ss:nonconstantdistexact}) is that there exist ``far from constant" distance-exact kernels which can be taken to be arbitrarily smooth (Theorem \ref{da_dist_exact_existence}). 

Before moving on we record the following useful Lemma:

\begin{lemma} \label{lemma:reduc_orth}
A function $K \in C(\R^n \setminus \{0\}) \cap L^\infty(\R^n)$ is $(d,\alpha)$-distance exact with constants $c_E \equiv c$ independent of the plane $E$ if and only if there some constant $\tilde{c}$ so that $K-\tilde{c}$ is $(d, \alpha)$-distance orthogonal. 
\end{lemma}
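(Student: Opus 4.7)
The plan is to exploit two elementary observations about the Riesz-like integral $R_K = R_{K,\mu,\alpha}$. First, $R_K$ is linear in the kernel $K$, so for any constant $\tilde c$ and any measure $\mu$,
$$R_{K-\tilde c,\mu,\alpha}(x) = R_{K,\mu,\alpha}(x) - \tilde c \, R_{1,\mu,\alpha}(x).$$
Second, for $\mu = \HD^d|_E$ with $E$ a $d$-plane, translation and a scaling by $\delta_E(x)$ transverse to $E$ yield
$$R_{1,\HD^d|_E,\alpha}(x) = k\,\delta_E(x)^{-\alpha},\qquad k = k(n,d,\alpha) := \int_{\R^d} \frac{dz}{(1+|z|^2)^{(d+\alpha)/2}} \in (0,\infty),$$
a positive, finite constant depending only on $n,d,\alpha$ (finiteness uses $\alpha > 0$). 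This also recovers the well-known fact that $D_{1,\HD^d|_E,\alpha}(x) = k^{-1/\alpha}\delta_E(x)$.

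With these in hand, the lemma follows by direct computation. Interpreting the distance-exact relation $D_K = c_E \delta_E$ as $R_K = c_E^{-\alpha}\delta_E^{-\alpha}$, and the distance-orthogonal relation as its natural limiting form $R_K \equiv 0$ on $\R^n\setminus E$ (the only consistent reading for signed, bounded kernels, since $R_K$ is always finite there by a dyadic-shells argument), the linearity identity gives
$$R_{K-\tilde c,\HD^d|_E,\alpha}(x) = (c^{-\alpha} - \tilde c\, k)\,\delta_E(x)^{-\alpha}$$
whenever $K$ is distance-exact with $c_E \equiv c$. Choosing $\tilde c := c^{-\alpha}/k$, which is a single constant independent of $E$ precisely because $c$ and $k$ are, makes $R_{K-\tilde c}$ vanish on every $d$-plane's complement, i.e.\ $K - \tilde c$ is distance-orthogonal. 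Conversely, if $\tilde c$ is a constant for which $K - \tilde c$ is distance-orthogonal, then $R_K = \tilde c\, k\,\delta_E^{-\alpha}$ on every $d$-plane, so $D_K = (\tilde c k)^{-1/\alpha}\delta_E$, exhibiting the single $E$-independent constant $c := (\tilde c k)^{-1/\alpha}$.

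There is no serious obstacle here: the entire argument is linear algebra once the explicit form of $R_1$ on a $d$-plane is in hand. The only point requiring care is the absolute convergence of the integrals defining $R_K$ and $R_1$ (needed so that linearity in the kernel is automatic), which follows from the Ahlfors regularity of $\HD^d|_E$ together with $\alpha > 0$ and the boundedness of $K$. The only conceptual wrinkle is the reinterpretation of the distance-orthogonal condition in the form $R_K \equiv 0$ so that it applies to the signed kernel $K - \tilde c$; as noted above, this is the unique reading consistent with the distance-exact relation taken in the limit $c_E \to 0$.
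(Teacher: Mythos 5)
Your proof is correct and follows essentially the same route as the paper: both arguments rest on the linearity of $R_{K,E,\alpha}$ in $K$ together with the fact that the constant kernel produces a constant multiple of $\delta_E(x)^{-\alpha}$ outside every $d$-plane, with the $E$-independence of that multiple coming from translation and rotation invariance. You merely make the normalizing constant $k(n,d,\alpha)$ explicit, which the paper leaves implicit.
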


\begin{proof}
Recall that $K$ is $(d,\alpha)$-distance-exact with constants $c_E \equiv c$ independent of $E$ if and only if for every $E \in G(n,d)$ and every $x \not \in E$, we have
\begin{align*}
R_{K, E, \alpha} (x)  \equiv c^{-\alpha} \delta_E(x)^{-\alpha}.
\end{align*}
Given a $c$ there exists a $c_1\in \R$ such that if  $\tilde{K}(x) \equiv c_1$  then $R_{\tilde{K}, E,\alpha} = c^{-\alpha}\delta_E(x)^{-\alpha}$ for any affine $E$. The result follows from the linearity of $R_{K, E, \alpha}(x)$ in $K$ (for $E, x$ and $\alpha$ fixed).
\end{proof}

\subsection{Distance-exact kernels with additional symmetries}\label{ss:distanceexactsymmetries}
We briefly investigate distance-exact kernels with additional symmetry: either 0-homogeneity or rotational invariance. In particular, we first show that rotationally invariant (i.e. depending only on the radial variable) distance-exact kernels must be constant (this is Theorem \ref{thm:dist_exact_radial}). On the other hand we show that all $0$-homogeneous kernels whose associated distance characterizes good geometry must be distance exact. We leave to Appendix \ref{appendix:homog_ker} the existence and non-existence of non-constant distance-exact $0$-homogeneous kernels in various settings. 

\begin{thm}
Let $n,d \in \N$ with $1 \le d < n$ and let $\alpha >0$. Suppose that $K\in C(\R^n \setminus \{0\}) \cap L^\infty( \R^n)$ is $(d,\alpha)$-distance-exact and radial (i.e., $K(x) = \tilde{K}(|x|)$ for $\tilde{K} \in C(0, \infty) \cap L^\infty(0, \infty)$). Then $K$ is constant. \label{thm:dist_exact_radial}
\end{thm}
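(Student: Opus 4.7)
The plan is to reduce the identity defining $(d,\alpha)$-distance-exactness to a multiplicative convolution equation in a single radial variable and then appeal to the Wiener--Tauberian theorem. First, because $K$ is rotationally invariant and $O(n)$ acts transitively on the Grassmannian $G(n,d)$, the constants $c_E$ must all be equal to a common value; indeed, pulling back the integral $R_{K, \HD^d|_E, \alpha}$ along a rotation sending $E$ to $E'$ identifies it with the corresponding integral over $E'$, which forces $c_E = c_{E'}$. By Lemma~\ref{lemma:reduc_orth}, subtracting an appropriate constant reduces the problem to showing that any radial, bounded, continuous (on $\R^n \setminus \{0\}$) kernel which is $(d,\alpha)$-distance-orthogonal must vanish identically.

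Fix $E = \R^d \times \{0\}$ and write $K(y) = \tilde K(|y|)$. Evaluating the orthogonality condition at $x = t e_{d+1}$ and applying polar coordinates in $\R^d$ followed by the substitution $s = \sqrt{r^2 + t^2}$ gives
\begin{align*}
\int_t^\infty \tilde K(s)\, (s^2 - t^2)^{(d-2)/2}\, s^{1-d-\alpha}\, ds \;=\; 0 \qquad \text{for every } t > 0.
\end{align*}
Rescaling $s = tu$ recasts this as the Mellin convolution
\begin{align*}
\int_1^\infty \tilde K(tu)\, k(u)\, du \;=\; 0, \qquad k(u) \;:=\; (u^2 - 1)^{(d-2)/2}\, u^{1-d-\alpha}\, \mathbf 1_{[1, \infty)}(u).
\end{align*}
Viewed on the multiplicative group $(0, \infty)$ (equivalently, via $\log$, on $\R$), this is an ordinary convolution equation in which $\tilde K$ is bounded and $k \in L^1$: the kernel has an integrable singularity at $u = 1$ since $d \ge 1$, and decays like $u^{-1-\alpha}$ at infinity since $\alpha > 0$.

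The core of the argument is the Wiener--Tauberian theorem, which ensures that if the Mellin symbol of $k$ has no zeros on the appropriate vertical line, then $\tilde K \equiv 0$ a.e., and continuity of $\tilde K$ then upgrades this to everywhere. The substitution $w = 1/u^2$ identifies the Mellin transform with a classical Beta integral,
\begin{align*}
\widehat k(z) \;:=\; \int_0^\infty k(u)\, u^{z-1}\, du \;=\; \tfrac{1}{2}\, B\!\left(\tfrac{\alpha - z + 1}{2},\ \tfrac{d}{2}\right) \;=\; \tfrac{1}{2}\, \frac{\Gamma\!\left(\tfrac{\alpha - z + 1}{2}\right)\, \Gamma\!\left(\tfrac{d}{2}\right)}{\Gamma\!\left(\tfrac{\alpha + d - z + 1}{2}\right)}.
\end{align*}
On the line $z = 1 - i\xi$ (which corresponds, after taking logs, to the Fourier transform in the additive variable), the Gamma arguments $(\alpha + i\xi)/2$ and $d/2$ have strictly positive real part; since $\Gamma$ is finite and zero-free there, $\widehat k$ never vanishes and the Wiener--Tauberian theorem closes the argument. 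This non-vanishing of the Mellin symbol is the only delicate point; everything else -- the reduction via rotations and Lemma~\ref{lemma:reduc_orth}, the polar coordinate computation, the integrability check on $k$, and the final upgrade from a.e.\ to everywhere -- is routine bookkeeping.
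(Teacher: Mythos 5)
Your proof is correct, and while it shares the same skeleton as the paper's (reduction to distance-orthogonality via rotations and Lemma~\ref{lemma:reduc_orth}, the polar-coordinate identity, and ultimately Wiener--Tauberian theory), the core step is handled by a genuinely different and more unified argument. The paper splits into cases: for even $d$ it differentiates the integral identity in $r$ repeatedly to drop the dimension by $2$ until it reaches $d=2$, where the fundamental theorem of calculus finishes; for odd $d$ it reduces to $d=1$ and only there invokes Wiener's theorem, verifying non-vanishing of the Fourier transform of $e^{x(1-\alpha)}(e^{2x}-1)^{-1/2}$ by a monotonicity argument (a decreasing function has positive sine transform). You instead treat all $d\ge 1$ at once: after rescaling $s=tu$ you recognize the identity as a Mellin convolution of the bounded function $\tilde K$ against the fixed $L^1$ kernel $k(u)=(u^2-1)^{(d-2)/2}u^{1-d-\alpha}\mathbf 1_{[1,\infty)}$, compute its Mellin symbol in closed form as $\tfrac12 B\bigl(\tfrac{\alpha-z+1}{2},\tfrac d2\bigr)$, and read off non-vanishing on the line $\operatorname{Re} z=1$ from the zero-freeness and pole locations of $\Gamma$ (the arguments $\tfrac{\alpha+i\xi}{2}$, $\tfrac d2$, and their sum all have positive real part). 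I checked the computation: the Mellin symbol on $z=1-i\xi$ is indeed the Fourier transform of the additive kernel $k(e^y)e^y$, the $L^1$ bounds at $u=1$ and $u=\infty$ use exactly $d\ge 1$ and $\alpha>0$, and the Beta substitution $w=1/u^2$ is right. Your route buys uniformity in $d$ (no parity case split, no repeated differentiation under the integral sign, which the paper performs somewhat informally) at the cost of requiring the explicit Beta/Gamma evaluation; the paper's route is more elementary in the even case but needs the ad hoc monotonicity trick for $d=1$.
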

\begin{proof}
Since $K$ is invariant under rotations, the constants $c_E$ in the definition of distance-exactness are independent of $E \in G(n,d)$. In particular, Lemma \ref{lemma:reduc_orth} shows that $K-c$ is $(d,\alpha)$-distance-orthogonal for an appropriate constant $c$. In light of this, it suffices to prove that if $K$ is radial and $(d,\alpha)$-distance-orthogonal, then $K \equiv 0$. In what follows, we abuse notation and use $K$ both for the function defined on $\R^n$, and for the function $\tilde{K}$ defined on $(0,\infty)$ for which $K(x) = \tilde{K}(|x|)$.

Let $E$ be a $d$-plane (equipped with the Hausdorff measure), and let $x \not \in E$. Writing $R_K(x)$ as an integral in polar coordinates about the point $P_E(x) \in E$, one can compute
\begin{align*}
   R_K(x) & = c_d\int_{0}^\infty \dfrac{K(\sqrt{\delta_E(x)^2 + s^2})}{\bigl(\delta_E(x)^2+s^2\bigr)^{\frac{d + \alpha}2}}\, s^{d-1} \; ds \\
& = c_d \int_{\delta_E(x)}^{\infty} \dfrac{K(t)}{t^{d+\alpha - 1}} \bigl(t^2 - \delta_E(x)^2\bigr)^{\frac{d-2}2} \; ds.
\end{align*}
It follows that $K$ is $(d,\alpha)$-distance-orthogonal if and only if for each $r >0$, one has 
\begin{align}
    \int_{r}^\infty \dfrac{K(t)}{t^{d+\alpha - 1}}  \bigl(t^2 - r^2\bigr)^{\frac{d-2}2} \; dt = 0. \label{rdocond}
\end{align}
When $d = 2$, the fundamental theorem of calculus implies that $K \equiv 0$ for any $\alpha > 0$. When $d = 2j > 2$, one may differentiate the integral to obtain 
\begin{align*}
    \int_{r}^\infty  \dfrac{K(t)}{t^{d+\alpha - 1}} \bigl(t^2 - r^2\bigr)^{\frac{d-4}2} \; dt = 0.
\end{align*}
So if $K$ is $(d,\alpha)$-distance orthogonal then it is also $(d-2, \alpha+2)$-distance orthogonal. Repeating this process a total of $j-1$ times yields $K \equiv 0$. This completes the proof when $d$ is even.

For odd $d$ arguing as above reduces to the case when $d= 1$. To prove that the only $(1, \alpha)$-distance orthogonal radial kernel is trivial, we use Weiner-Tauberian theory. For each $r >0$ define the function
\begin{align*}
f_r(t) & = \chi_{(r, \infty)}(t)  \dfrac{1}{t^{\alpha}\sqrt{t^2-r^2}}.
\end{align*}
Let $W = \overline { \text{span} \{ f_r(t) \; : r > 0 \}  } \subset L^1(0, \infty)$, and note that $W$ is closed under dilations, since for $\lambda >0$
\begin{align*}
    f_r(\lambda t) & =  \chi_{(r, \infty)}(\lambda t)  \dfrac{1}{(\lambda t)^{\alpha}\sqrt{(\lambda t)^2 - r^2}} \\
& = \lambda^{-\alpha - 1} \chi_{(r/\lambda, \infty)}(t) \dfrac{ 1}{t^\alpha \sqrt{t^2 - (r/\lambda)^2}} \\
& = \lambda^{-\alpha - 1} f_{r/\lambda}(t) \in W.
\end{align*}
By definition, if $K$ is distance-orthogonal then $K \in W^\perp \subset L^\infty(0,\infty)$. Thus to show $K \equiv 0$ it suffices to show $W = L^1(0,\infty)$. To this end, we  consider the linear isomorphism $$T:L^1(0, \infty) \ra L^1(\R)$$ that maps $f(x)$ to $e^x f(e^x)$. Note that that $T (W) \subset L^1(\R)$ is a closed subspace of $L^1(\R)$ that is also closed under translations, to wit,
\begin{align*}
    (Tf)(x+a) & = e^a e^x f(e^a e^x)= e^a Tg(x)
\end{align*}
for $g(x) := f(e^a x) \in W$.

The Wiener Theorem (cf. Theorem 9.3 in \cite{RUDIN}) implies that $T(W) = L^1(\mathbb R)$ (and thus $K \equiv 0$) if
\begin{align*}
 Z(T(W)) & \equiv \bigcap\limits_{g \in T(W)} \{ s \in \R \; : \; \hat{g}(s) = 0 \} = \emptyset.
\end{align*}
This is a direct computation; we show that for each $s \in \R$, 
$$
\int_{\R} e^x f_1(e^x) e^{-2 \pi i x s} \; dx  = \int_{0}^\infty \dfrac{e^{x(1 - \alpha)}}{\sqrt{e^{2x}-1}} e^{-2 \pi i x s} \; dx \ne 0.
$$
For $s = 0$, this is obvious since $e^x f_r(e^x) \ge 0$. Furthermore the case $s > 0$ and $s <0$ are identical up to a change of sign, so we may assume $s > 0$. Observe that, $\dfrac{e^{x(1 - \alpha)}}{\sqrt{e^{2x}-1}}$ is decreasing on $(0,\infty)$, and thus 
\begin{align*}
\int_0^\infty  \dfrac{e^{x(1 - \alpha)}}{\sqrt{e^{2x}-1}} \sin(2 \pi x s) \; dx > 0,
\end{align*}
as desired.
\end{proof}

The next natural symmetry class to consider is homogeneous of degree $0$ kernels, i.e. $K$ such that $K(\lambda x) = K(x)$ for every $\lambda > 0$ and $x\in \mathbb R^n$. Our first observation is that for kernels that are homogeneous of degree zero, being distance-exact is a necessary requirement in order for $D_K$ to satisfy the USFE outside of each $E \in G(n,d)$:
\begin{thm}
Let $\alpha >0$ and let $K$ be a distance-standard, homogeneous of degree zero kernel with the property that for each $E \in G(n,d)$, $D_{K, \HD^d|_E, \alpha}$ satisfies the USFE. Then necessarily $K$ is $(d,\alpha)$-distance-exact. \label{thm:usfe_implies_dist_ex}
\end{thm}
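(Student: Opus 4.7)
The plan is to exploit the $0$-homogeneity of $K$ together with the linear structure of $E \in G(n,d)$ to force $D_K$ into a very rigid form, then extract constancy of the relevant spherical profile from the USFE via a separation-of-variables computation and an eikonal maximum-principle argument.

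\textbf{Step 1 (rigid form of $D_K$).} Fix $E \in G(n,d)$ and split $x = x' + x''$ with $x' = P_E(x)$, $x'' \in E^\perp$, so that $\delta_E(x) = |x''|$. Substituting $y \to y+v$ (for $v \in E$) and $y \to \lambda y$ (for $\lambda > 0$) in the integral defining $R_K$ and using $K(\lambda z) = K(z)$ for $\lambda > 0$ gives
\begin{align*}
R_K(x+v) & = R_K(x), \qquad v \in E, \\
R_K(\lambda x) & = \lambda^{-\alpha} R_K(x), \qquad \lambda > 0.
\end{align*}
Together with the distance-standard estimates on $R_K$, these invariances force
$$ D_K(x) = |x''|\, g_E(\nu), \qquad \nu := x''/|x''| \in \sphere^{n-d-1}\cap E^\perp, $$
for some smooth positive $g_E$ on $\sphere^{n-d-1}\cap E^\perp$. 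Being $(d,\alpha)$-distance-exact for $E$ is precisely $g_E \equiv \text{const}$, so the problem reduces to proving this constancy from the USFE.

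\textbf{Step 2 (reduce $F_K$ to a spherical gradient).} Because $D_K$ is $1$-homogeneous in $x''$ and $x'$-independent, $|\nabla D_K|^2$ is $0$-homogeneous in $x''$ and $x'$-independent, so $|\nabla D_K(x)|^2 = H_E(\nu)$ for some smooth $H_E$ on $\sphere^{n-d-1}\cap E^\perp$. A one-line chain-rule computation using $\partial_{x''_j}\nu_i = (\delta_{ij} - \nu_i \nu_j)/|x''|$ yields
$$ F_K(x) = \delta_E(x)\,|\nabla H_E(\nu)| = |\nabla_S H_E(\nu)|, $$
where $\nabla_S$ is the intrinsic gradient on $\sphere^{n-d-1}\cap E^\perp$. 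In particular $F_K$ is $0$-homogeneous and depends only on $\nu$.

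\textbf{Step 3 (USFE forces $H_E$ constant).} Applying the USFE at $Q = 0$, $r = 1$ and changing to cylindrical coordinates $(x', \rho, \omega) \in E \times (0,\infty) \times (\sphere^{n-d-1}\cap E^\perp)$ with $x'' = \rho\omega$ gives $\delta_E(x)^{-n+d}\, dx = \rho^{-1}\, dx'\, d\rho\, d\omega$, so the USFE integrand separates:
$$ \int_{B(0,1)\cap\Omega} F_K(x)^2\, \delta_E(x)^{-n+d}\, dx \;=\; \Big(\int_{\sphere^{n-d-1}\cap E^\perp} |\nabla_S H_E(\omega)|^2\, d\omega\Big) \cdot c_d \int_0^1 \rho^{-1}(1-\rho^2)^{d/2}\, d\rho. $$
The $\rho$-integral is logarithmically divergent at $\rho = 0$, so the only way the USFE can be finite is $\nabla_S H_E \equiv 0$. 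Hence $|\nabla D_K| \equiv c_E$ is a positive constant on $\Omega$.

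\textbf{Step 4 (eikonal rigidity and extension to shifted planes).} Substituting $D_K = \rho g_E(\omega)$ into $|\nabla D_K|^2 \equiv c_E^2$ and using the standard polar-coordinate decomposition $|\nabla f|^2 = (\partial_\rho f)^2 + \rho^{-2}|\nabla_S f|^2$ produces the eikonal identity
$$ g_E(\omega)^2 + |\nabla_S g_E(\omega)|^2 = c_E^2 \qquad \text{on } \sphere^{n-d-1}\cap E^\perp. $$
Evaluating this identity at a maximum and at a minimum of the smooth function $g_E$ on the compact sphere (where $\nabla_S g_E$ vanishes) gives $g_E^2 = c_E^2$ at both points; since $g_E > 0$, this forces $g_{E,\max} = g_{E,\min} = c_E$, so $g_E \equiv c_E$. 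Distance-exactness for every $E \in G(n,d)$ then extends to all $d$-affine planes by the translation identity $R_{K, \HD^d|_{E+v}, \alpha}(x) = R_{K, \HD^d|_E, \alpha}(x-v)$.

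\textbf{Main obstacle.} The crux is Step 3: upgrading the Carleson-type USFE into a pointwise rigidity statement on the sphere. The whole argument hinges on the exact logarithmic divergence of $\int_0^1 \rho^{-1}\, d\rho$, which is available only because $0$-homogeneity of $K$ and the conical nature of $E \in G(n,d)$ let the weights separate cleanly in cylindrical coordinates; without both invariances the integrand would not decouple and no such rigidity could be extracted. Step 4's eikonal argument is where positivity of $K$ is essential, since it is the only input that distinguishes $g_E = +c_E$ from $g_E = -c_E$ at critical points.
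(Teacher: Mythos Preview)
Your argument is correct and follows essentially the same route as the paper's proof: use $0$-homogeneity and translation-invariance to see that $F_K$ depends only on the spherical variable $\nu \in \sphere^{n-d-1}\cap E^\perp$, then separate variables in the USFE integral to isolate a logarithmically divergent radial factor, forcing the spherical integral of $F_K^2$ to vanish and hence $|\nabla D_K|$ to be constant.

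The one genuine difference is your Step~4. The paper, having obtained $|\nabla D_K| \equiv \text{const}$, invokes \cite[Theorem~3.1 / Corollary~3.2]{DEMMAGIC} as a black box to conclude $D_K = c_E \delta_E$. You instead substitute $D_K = \rho\, g_E(\omega)$ into the constant-eikonal identity $g_E^2 + |\nabla_S g_E|^2 = c_E^2$ and evaluate at the extrema of $g_E$ on the compact sphere. This is a clean, self-contained replacement for the cited result in the special situation at hand (smooth, $1$-homogeneous, $E$-translation-invariant $D_K$), and it makes transparent where positivity of $K$ enters. The paper's citation is more general (it handles $|\nabla D|$ constant on each connected component of an arbitrary $\Omega$ without assuming the conical structure), but in the present setting your argument is both sufficient and more elementary. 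A minor cosmetic point: the paper extracts the divergence by sending $R \to \infty$ and using the coarea formula on $E^\perp$, whereas you work in a single ball and use the divergence of $\int_0^1 \rho^{-1}(1-\rho^2)^{d/2}\,d\rho$ at $\rho=0$; by homogeneity these are equivalent.
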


\begin{proof}
Suppose that $E \in G(n,d)$ is fixed, and $\mu = \HD^d|_E$. Since $K$ is zero-homogeneous, it is easy to see that $R_K$ is homogeneous of degree $-\alpha$, and thus $D_K$ is homogeneous of degree $1$, and $F_K$ is homogeneous of degree zero. Moreover, since $D_K$ is translation invariant with respect to $E$ (because $E$ is a plane), we have that
\begin{align}
\int_{B(0,R)} F_K(x)^2 \delta_E(x)^{-n+d} \; dx  \ge c R^d \int_{E^\perp \cap B(0,cR)} F_K(x)^2 \delta_E(x)^{-n+d} \; d\HD^{n-d}(x), \label{eqn:fk_scale_inv}
\end{align}
where $E^\perp \in G(n, n-d)$ is the orthogonal complement to $E$ and $0 < c < 1$ is a dimensional constant so that $(B(0, cR) \cap E) + (B(0, cR)) \cap E^{\perp} \subset B(0, R)$. Since $D_{K, \HD^d|_E, \alpha}$ satisfies the USFE, we readily see that taking $R \ra \infty$ in (\ref{eqn:fk_scale_inv}) yields
\begin{align}
\int_{E^\perp} F_K(x)^2 \delta_E(x)^{-n+d}  d\HD^{n-d}(x) < \infty.  \label{eqn:fk_scale_inv_2}
\end{align}
We show this implies $F_K \equiv 0$. 

By the coarea formula with the Lipschitz function $\delta_E(x)$, we have that 
\begin{align*}
\int_{E^\perp \cap B(0,R)} F_K(x)^2 \delta_E(x)^{-n+d} \; & d\HD^{n-d}(x) \\
& = \int_0^R \left( \int_{E^\perp \cap \{  \delta_E(x) = t \} }F_K(x)^2  \delta_E(x)^{-n+d} \; d \HD^{n-d-1}(x) \right) \; dt \\ 
& = \int_0^R t^{-n+d}  \left( \int_{E^\perp  \cap \{  \delta_E(x) = t \} }F_K(x)^2  \; d \HD^{n-d-1}(x) \right) \; dt \\
& = \int_0^R t^{-1}  \left( \int_{E^\perp  \cap \{  \delta_E(x) = 1 \} }F_K(x)^2  \; d \HD^{n-d-1}(x) \right) \; dt \\
& =  \left( \int_{E^\perp  \cap \{  \delta_E(x) = 1 \} }F_K(x)^2  \; d \HD^{n-d-1}(x) \right) \int_0^R \dfrac{dt}{t}.
\end{align*}
Combining this with (\ref{eqn:fk_scale_inv_2}), we see that 
\begin{equation}\label{e:intfzero}
\int_{E^\perp  \cap \{  \delta_E(x) = 1 \} }F_K(x)^2  \; d \HD^{n-d-1}(x) = 0.
\end{equation}
Since $F_K$ is scale-invariant, and $F_K(x) = F_K(x + v)$ for any $v \in E$, we have that $F_K$ is constant on the set $\{\delta_E(x) = 1\}$, and if \eqref{e:intfzero} holds, $F_K \equiv 0$. From here it follows that $| \nabla D_K|$ is constant on $\mathbb R^d\backslash E$ and thus Theorem 3.1 in \cite{DEMMAGIC} implies that $D_K \equiv c \delta_E(x)$ for some $c = c_E>0$. This shows that $K$ is $(d,\alpha)$-distance-exact.
\end{proof}

In contrast to the radial case, we can construct many examples of $0$-homogeneous non-constant $(d,\alpha)$-distance-exact kernels (and even guarantee that the constant $c_E$ is independent of $E \in G(n, d)$). We leave such computations to Appendix \ref{appendix:homog_ker}, but want to draw attention to Corollaries \ref{rmk:dist_exact_codim_1} and \ref{rmk:dist_exact_codim_2}, which depending on the choice of $n, d, \alpha$, show that continuous examples of 0-homogeneous $(d,\alpha)$-distance exact kernels exist and or do not exist respectively.

\subsection{Existence of non-trivial distance-exact kernels}\label{ss:nonconstantdistexact}

The goal of this subsection is to prove the first part of our main Theorem \ref{t:main}:
\begin{thm}
For each choice of  $n, d, m\in \N,$ with $ 1 \le d < n$ and $\alpha >0$, there exists a non-constant $(d,\alpha)$-distance-exact kernel $K \in L^\infty(\R^n) \cap C^\infty(\R^n \setminus \{0\})$ so that $| \nabla^m K(x) |x|^m | \in L^\infty( \R^n)$. Moreover, $K$ can be constructed so that the constant $c_E$ in (\ref{cond:distance_exact}) is independent of $E$.  \label{da_dist_exact_existence}
\end{thm}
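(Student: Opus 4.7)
The plan is to reduce via Lemma~\ref{lemma:reduc_orth} to constructing a non-zero $(d,\alpha)$-distance-\emph{orthogonal} kernel $K_0$ with the required regularity and decay; adding a constant $C>\|K_0\|_\infty$ then yields a positive distance-standard kernel $K=K_0+C$ that is distance-exact with $c_E$ independent of $E$. Thus the entire theorem reduces to the existence of such a $K_0$.

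For the existence step I would set $X_m$ to be the Banach space of $C^m(\R^n\setminus\{0\})$ functions equipped with $\|K\|_{X_m}:=\sum_{j=0}^{m}\|\nabla^j K(x)|x|^j\|_\infty$, and let $Z_m\subset X_m$ be the closed subspace of distance-orthogonal kernels, i.e.\ those annihilated by every continuous functional
\begin{align*}
\Lambda_{E,z}(K):=\int_E\frac{K(y+z)}{|y+z|^{d+\alpha}}\,d\HD^d(y),\qquad E\in G(n,d),\ z\in E^\perp\setminus\{0\}.
\end{align*}
The goal is to show $Z_m\ne\{0\}$. Since the family $\{\Lambda_{E,z}\}$ is $O(n)$-equivariant, $Z_m$ splits along the spherical-harmonic isotypic components of $X_m$. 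Writing a candidate as $K(x)=f(|x|)Y_\ell(x/|x|)$ with $Y_\ell$ a spherical harmonic of degree $\ell$, integration in polar coordinates on $E$ together with the Funk--Hecke formula in the angular direction collapses the entire family of constraints to a single one-parameter Mellin-convolution equation in the variable $|z|$ on the radial profile $f$. The $\ell=0$ case is precisely Theorem~\ref{thm:dist_exact_radial}, whose Wiener--Tauberian argument shows that the Mellin symbol is nowhere zero and forces $f\equiv 0$. For $\ell\ge 1$ the Funk--Hecke multiplier contributes a nontrivial Gegenbauer factor whose Mellin transform acquires real zeros; the dual Wiener--Tauberian/Hahn--Banach argument then produces non-zero bounded solutions $f$, hence a nonzero element of $Z_m$.

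The kernel produced in this way may only be bounded. Since the reduced constraint is a Mellin convolution in $\log r$, however, I would convolve $f$ with a smooth bump in the variable $\log r$; this operation commutes with the constraint and replaces $f$ by a smooth, $\log$-compactly supported profile. The resulting $K_0(x)=f(|x|)Y_\ell(x/|x|)$ is then smooth on $\R^n\setminus\{0\}$, bounded on $\R^n$, and satisfies the scale-invariant derivative bounds $\|\nabla^j K_0(x)|x|^j\|_\infty<\infty$ for $0\le j\le m$, since $Y_\ell\in C^\infty(\sphere^{n-1})$ and radial derivatives of a $\log$-compactly supported profile are uniformly controlled. The main obstacle is the spectral step: verifying that for some $\ell\ge 1$ the reduced Mellin symbol actually vanishes. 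In the radial case this fails (ruling out $\ell=0$), so the functional-analytic argument must pinpoint a higher angular mode whose Funk--Hecke multiplier forces a zero in the symbol, uniformly across all $(n,d,\alpha)$ with $1\le d<n$ and $\alpha>0$.
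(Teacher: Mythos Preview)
Your reduction via Lemma~\ref{lemma:reduc_orth} to constructing a nonzero distance-orthogonal kernel is exactly right, and your smoothing idea (convolving in $\log r$) is in the same spirit as the paper's. But the heart of your argument is the ``spectral step,'' and you yourself flag it as the main obstacle without resolving it: you assert that for some $\ell\ge 1$ the Funk--Hecke multiplier forces a zero in the Mellin symbol, but give no computation or argument to this effect. Without that, nothing rules out the possibility that the Wiener--Tauberian obstruction of Theorem~\ref{thm:dist_exact_radial} persists in every angular mode. There is also a subtlety in the reduction itself: when $n-d\ge 2$, the constraint $R_{K,E}(z)=0$ depends on the full vector $z\in E^\perp\setminus\{0\}$, not just $|z|$, so for a single product $f(|x|)Y_\ell(x/|x|)$ the integral over $E$ still carries a dependence on the direction $z/|z|$ after the angular integration on $E$. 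Collapsing the family $\{\Lambda_{E,z}\}$ to a single one-parameter Mellin convolution needs more than $O(n)$-equivariance alone.

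The paper sidesteps all of this with a much softer argument. It works in $X=C_0(\R^n\setminus\{0\})$ and shows directly, via Hahn--Banach separation and the uniform boundedness principle, that the weak-$*$ closed span in $X^*$ of the measures $\mu_E(dx)=|x|^{-d-\alpha}\,d\HD^d|_E$ (over all affine $d$-planes $E\not\ni 0$) cannot contain any point mass $\delta_{x_0}$. The key estimate is the elementary uniform bound $\mu_E(B(x_0,\epsilon))/\mu_E(\R^n)\lesssim (\epsilon/|x_0|)^d$, which prevents any net of finite combinations of the $\mu_E$ with uniformly bounded operator norm from concentrating at a point. This yields a continuous distance-orthogonal $K$ with no spectral input whatsoever. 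Smoothing is then done in two steps: first radially by averaging $K(tx)$ against $\phi(t)\,dt$ with $\phi\in C_c^\infty(0,\infty)$, then tangentially by averaging $K(Ax)$ against a smooth function on $SO(n)$; both operations preserve distance-orthogonality because dilations and rotations permute the family of planes. Your approach, if the spectral claim could be established, would give more explicit kernels with a prescribed angular profile; the paper's approach is non-constructive but complete and works uniformly in $(n,d,\alpha)$.
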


In light of Corollary \ref{rmk:dist_exact_codim_2} some of these kernels are not zero-homogeneous. Additionally, in contrast with Corollary \ref{rmk:dist_exact_codim_1}, we can guarantee the existence of said kernels  for any $\alpha > 0$ and in any co-dimension.

By Lemma \ref{lemma:reduc_orth} we can consider distance-orthogonal kernels. Our first step is to construct a non-zero, distance-orthogonal kernel using functional analytic methods which may or may not have the desired smoothness. In the following lemma, we denote by $C_0(\R^n \setminus \{0\})$ the closure, under the sup norm, of continuous functions with compact support in $\R^n\setminus \{0\}$. In particular if $f \in C_0(\R^n\setminus \{0\})$ then $f(x) \ra 0$ as $|x|$ goes to $\infty$ and $0$.

\begin{thm}
For each choice of $n \in \N$, $d \in \N$ with $1 \le d < n$, and $\alpha > 0$, there exists $K \in C_0(\R^{n} \setminus \{0\}) \cap L^\infty(\R^n)$ such that $K \not \equiv 0$, but $R_{K, E, \alpha} \equiv 0$ outside of $E$ for each $d$-plane $E \subset \R^n$. \label{dist_orth_existence}
\end{thm}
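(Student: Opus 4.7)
My plan is a Hahn--Banach duality argument. First, I rewrite the hypothesis after the change of variables $z = x-y$: as $E$ ranges over $G(n,d)$ and $x \notin E$, the affine plane $F := x - E$ ranges over all affine $d$-planes in $\R^n$ with $0 \notin F$, and the condition $R_{K,E,\alpha}(x) = 0$ becomes
$$
\int_F \frac{K(z)}{|z|^{d+\alpha}}\,d\HD^d(z) = 0 \quad \text{for every such } F.
$$
For each such $F$, associate the weighted measure $\nu_F := |z|^{-(d+\alpha)}\,d\HD^d|_F$. Integrating in polar coordinates on $F$ based at the point closest to the origin gives $\|\nu_F\| = C_{n,d,\alpha}\,\mathrm{dist}(0,F)^{-\alpha} < \infty$ since $\alpha > 0$; hence each $\nu_F$ lies in the dual $X^* := M(\R^n\setminus\{0\})$ of $X := C_0(\R^n\setminus\{0\})$, and the existence of a nonzero distance-orthogonal $K \in X$ is equivalent to the common annihilator $\bigcap_F \{K \in X : \langle K, \nu_F\rangle = 0\}$ being nonzero.

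By the Hahn--Banach theorem, this common annihilator is nontrivial if and only if the linear span $V := \mathrm{span}\{\nu_F\} \subset X^*$ is \emph{not} weak-$*$ dense in $X^*$: the weak-$*$ continuous linear functionals on $X^*$ are precisely elements of $X$, so any $\mu_0 \in X^*$ outside the weak-$*$ closure $\overline{V}^{w*}$ yields, by separation, a nonzero $K \in X$ annihilating every $\nu_F$. Thus the core of the proof reduces to exhibiting such an obstruction measure $\mu_0$. The structural reason one expects $\overline{V}^{w*} \neq X^*$ is that the weight $|z|^{-(d+\alpha)}$ is \emph{not integrable} along $d$-planes through the origin, so these planes are missing from the family $\{\nu_F\}_{0 \notin F}$; classical Radon-transform theory shows that the full $d$-plane transform (over \emph{all} affine $d$-planes) is injective on standard function spaces, and the omission of planes through the origin is precisely the slack that allows a nonzero annihilator to exist.

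To make the obstruction concrete one can first identify, for each spherical harmonic $Y_k$ on $S^{n-1}$, a homogeneous formal solution $K_{\mathrm{hom}}(z) = Y_k(z/|z|)\,|z|^{q_k(\alpha)}$ by solving the Abel-type integral equation arising on each spherical-harmonic sector via Mellin/Beta-function identities; for instance, in $n = 2$, $d = 1$ with $Y_2 \propto z_1 z_2/|z|^2$ one finds $K_{\mathrm{hom}}(z) = z_1 z_2\,|z|^{\alpha-3}$, which is genuinely distance-orthogonal but fails to lie in $C_0$. The functional-analytic step then upgrades the existence of such formal solutions into the existence of a bounded $C_0$ solution without requiring an explicit formula. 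The main obstacle is the non-density $\overline{V}^{w*} \neq X^*$ itself: weak-$*$ limits can concentrate mass, so naive obstructions (such as $V$ consisting of measures supported on $d$-rectifiable sets) do not pass to the closure. The careful argument must combine the rotational/dilational equivariance of $\{\nu_F\}$ (namely $O_* \nu_F = \nu_{OF}$ for $O \in SO(n)$ and $(D_\lambda)_* \nu_F = \lambda^\alpha \nu_{\lambda F}$) with the non-triviality of the Abel-type integral kernels above to rule out weak-$*$ approximation of a sufficiently generic test measure; once that is achieved, Hahn--Banach closes the argument at once.
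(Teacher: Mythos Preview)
Your setup is exactly the paper's: translate $R_{K,E,\alpha}\equiv 0$ to orthogonality against the measures $\nu_F = |z|^{-(d+\alpha)}\,d\HD^d|_F$, and observe via Hahn--Banach that a nonzero $K\in X=C_0(\R^n\setminus\{0\})$ exists if and only if the weak-$*$ closure $\overline{V}^{w*}$ of $\mathrm{span}\{\nu_F\}$ is a proper subspace of $X^*$. So far so good.

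The gap is that you never actually prove $\overline{V}^{w*}\ne X^*$. Everything after ``To make the obstruction concrete'' is heuristic: the homogeneous formal solutions $Y_k(z/|z|)\,|z|^{q_k(\alpha)}$ you construct are not in $C_0$, so they do not witness non-density; and the final paragraph (``combine the rotational/dilational equivariance \dots\ with the non-triviality of the Abel-type integral kernels'') is a description of what a proof might use, not a proof. You correctly note that the naive rectifiability obstruction fails under weak-$*$ limits, but you offer no replacement.

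The paper's argument fills this gap cleanly and avoids harmonic analysis altogether. It takes the obstruction measure to be a single Dirac mass $\delta_{x_0}$, $x_0\ne 0$, and argues by contradiction: if $\nu_i=\sum_j a_j^i\,\nu_{F_j^i}\rightharpoonup\delta_{x_0}$ in $X^*$, the Uniform Boundedness Principle forces $\sup_i\|\nu_i\|=\sup_i\sum_j|a_j^i|\,\|\nu_{F_j^i}\|\le B<\infty$. But then for any $\epsilon\ll|x_0|$ one has $|\nu_i|(B(x_0,\epsilon))\le B\cdot C_\epsilon$ where
\[
C_\epsilon:=\sup_{0\notin F}\frac{\nu_F(B(x_0,\epsilon))}{\nu_F(\R^n)}\lesssim \Bigl(\frac{\epsilon}{|x_0|}\Bigr)^d\to 0,
\]
uniformly in $F$ (split into $\mathrm{dist}(0,F)>2|x_0|$ where the ball misses $F$, and $\mathrm{dist}(0,F)\le 2|x_0|$ where the numerator is $\lesssim \epsilon^d|x_0|^{-d-\alpha}$ and the denominator is $\gtrsim|x_0|^{-\alpha}$). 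Testing against a bump $f\phi_\epsilon$ with $f(x_0)\ne 0$ then contradicts $\nu_i(f\phi_\epsilon)\to f(x_0)$. This is the missing idea: the measures $\nu_F$ are uniformly ``non-atomic'' relative to their total mass, so no bounded-norm combination can weak-$*$ approximate a point mass.
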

\begin{proof}
Take $X = C_0(\R^n \setminus \{0\}) \cap L^\infty(\R^n)$. As $X$ is a closed subset of $L^\infty(\R^n)$, it is a Banach space (when endowed with the supremum norm). Note that for $K \in C_0(\R^n \setminus \{0\}) \cap L^\infty(\R^n)$, we have that $R_{K, E, \alpha} \equiv 0$ for each $d$-plane $E \subset \R^n$ if and only if
\begin{align*}
\int_E \dfrac{K(z)}{|z|^{d+\alpha}} \; d\HD^d(z) =0 
\end{align*}
for each $E \in A(n,d)$ not containing the origin. This is because the affine change of variables $z = x-y$, which maps $E$ to the plane $x - E$, preserves $\HD^d$ measure. Hence, a kernel $K$ satisfies the conclusion of the Theorem if and only if $K$ is orthogonal to each measure of the form $d\mu(x) = |x|^{-d-\alpha} \; d\HD^d|_E(x)$ where $E \in A(n,d)$ does not contain the origin. For the sake of convenience, denote each such measure by $\mu_E$. Define $ M \subset X^*$ to be the weak-star closure of the subspace
\begin{align*}
\text{span} \{ \mu_E \; : \;   E \in A(n,d) \text{ with } 0 \not \in E \},
\end{align*}
where we view each measure $\mu_E$ as an element in $X^*$. Then $R_{K, E, \alpha} \equiv 0$ for each $E \in A(n,d)$ if and only if $K \in  {}^\bot M$, where
\begin{align*}
{}^\bot M \equiv \{f \in X \; : \; \Lambda( f) = 0 \text{ for all } \Lambda \in M\}.
\end{align*}

By the Hahn-Banach separation theorem, the existence of such a $K$ is equivalent to the existence of an $x \ne 0$ in $\mathbb R^n$ so that $\delta_x \not \in M$. Here, $\delta_x \in M$ is the functional $\delta_x(K') = K'(x)$. We will actually show something stronger, that $\delta_x \notin M$ for all $0 \neq x \in \mathbb R^n$. 

Let $x_0 \ne 0$ be given, and suppose for the sake of contradiction that $\delta_{x_0} \in M$. By definition there exist complex measures of the form $\nu_i = \sum_{j=1}^{m_i} a_j^i \mu_{E_j^i}$ where $a_j^i \in \C$ and $ 0 \not \in E_j^i \in A(n,d)$ are distinct such that $\nu_i \rightharpoonup \delta_{x_0}$. That is, for each $f \in X$, $\int f \; d\nu_i \ra f(x_0)$. Define $T_i: X \ra \C$ by $T_i(f) = \int f \; d\nu_i$. The $T_i$ are bounded linear functionals on $X$, and moreover,
\begin{align*}
     \sup_{i \in \N} |T_i(f)| < \infty
\end{align*}
since $T_i(f) \ra f(x_0)$ as $i \ra \infty$. By the Uniform Boundedness Principle, $\sup_i \|T_i\| = B < \infty$. One can check that $\|T_i\| = \sum_{j=1}^{m_i} |a_j^i| \;  \mu_{E_j^i}(\R^n)$, since the $E_j^i$ are distinct, and since distinct $E_j^i$ intersect in affine sets of zero $\HD^d$ measure.

Let $ \epsilon >0$ be given so that $ \epsilon < |x_0|$, and choose $\phi_{\epsilon} \in C_c(\R^n)$ so that $0 \le \phi_{\epsilon} \le 1$, $\phi_{\epsilon} \equiv 1$ on $B(x_0, \epsilon/2)$, and so that $\text{supp } \phi_{\epsilon} \subset B(x_0, \epsilon)$. Fix some $f \in X$ with $f(x_0) \ne 0$. A simple calculation yields that 
\begin{align*}
\delta_{x_0}(f) & = \delta_{x_0}(f \phi_\epsilon )   = \lim_{i \ra \infty} \int_{\R^n} f \phi_\epsilon \; d \nu_i.
\end{align*}
Note though that 
\begin{align*}
\left | \int_{\R^d} f \phi_\epsilon  \; d \nu_i \right| & \le \|f\|_\infty |\nu_i|(B(x_0, \epsilon)).
\end{align*}
If we can show that
\begin{equation}
|\nu_i|(B(x_0, \epsilon)) \da 0 \text{ uniformly in } i \text{ as } \epsilon \da 0, \label{eqn:reduc11}
\end{equation}
then we will have obtained the contradiction with $\delta_{x_0}(f) = 0$,  and conclude that $\delta_{x_0} \not \in M$.

For each $\epsilon$, we set
\begin{align*}
     C_\epsilon \equiv \sup_{0 \not \in E \in A(n,d)} \dfrac{\mu_E(B(x_0,  \epsilon))}{\mu_E(\R^n)} \le 1.
\end{align*}
Since $\mu_E(B(x_0, \epsilon)) \le C_\epsilon \mu_E(\R^n) < \infty$, for any $E \in A(n,d)$ not containing the origin, we have that 
\begin{align*}
|\nu_i|(B(x_0, \epsilon) )  \le \sum_{j=1}^{m_i} |a_j^i| \mu_{E_j^i}(B(x_0, \epsilon)) \le C_\epsilon \sum_{j=1}^{m_i} |a_j^i| \mu_{E_j^i}(\R^n)  = C_\epsilon \|T_i\|  \le C_\epsilon B.
\end{align*}
In particular, it suffices to show $C_\epsilon \da 0$ as $\epsilon \da 0$. 

Let $\epsilon \ll |x_0|$ and $E$ be an arbitrary $d$-affine plane not containing the origin. We consider two cases; first,  if $\mathrm{dist}(0, E) > 2 |x_0|$, then $B(x_0, \epsilon)\cap E = \emptyset$ and $\mu_E(B(x_0, \epsilon))/\mu_E(\mathbb R^n) = 0$. On the other hand if $\mathrm{dist}(0, E) < 2|x_0|$, then we have the lower bound $\mu_E(\mathbb R^n) \gtrsim \mathrm{dist}(0, E)^{-\alpha} \gtrsim |x_0|^{-\alpha}$. It is then easy to estimate $$\mu_E(B(x_0, \epsilon)) \lesssim \frac{\epsilon^d}{(|x_0|-\epsilon)^{d+\alpha}} \lesssim \frac{\epsilon^d}{|x_0|^{d+\alpha}},$$ where in the last inequality we used that $\epsilon \ll |x_0|$. Putting all this together we get that $$C_\epsilon \lesssim |x_0|^{\alpha} \frac{\epsilon^d}{|x_0|^{d+\alpha}} = \left(\frac{\epsilon}{|x_0|}\right)^d\stackrel{\epsilon \downarrow 0}{\rightarrow} 0.$$
\end{proof}

We cannot na\"ively adapt the above argument to guarantee that the kernel we obtain is smooth (in particular, distance-standard). This is because we do not have the crucial equality $\|T_i\| = \sum_j^i |a_j^i| \mu_{E_j^i}(\R^n)$ when we consider the norm in $(C^2(\R^n \setminus \{0\}))^*$. Instead, we smooth out the $K$ obtained above, first along each ray from the origin and then along each spherical shell.

\begin{lemma}
Let $n,d \in \N$ with $1 \le d < n$ and let $\alpha >0$. Let $\phi \in L^1(0,\infty)$, and suppose that $K \in C_0(\R^n \setminus \{0\}) \cap L^\infty(\R^n)$ is $(d,\alpha)$-distance-orthogonal. Then the kernel defined by 
\begin{align*}
\tilde{K}(x) \equiv \int_0^\infty K(tx) \phi(t) \; dt
\end{align*}
is also $(d,\alpha)$-distance-orthogonal. Moreover, if we assume that $\phi \in C^k(0,\infty)$ with
\begin{align*}
\int_0^\infty t^m |\phi^{(m)}(t)| \; dt < \infty
\end{align*}
for $m = 1,2, \dotsc, k$, then $f_x(t) =  \tilde{K}(xt)$ is $C^k(0,\infty)$ in $t$ for each $x \ne 0$, and 
\begin{align*}
\left \| \dfrac{d^m}{d t^m} (f_x(t))  t^m \right\|_{L^\infty(0, \infty)} \le M_m < \infty
\end{align*}
for $m=0, 1, \dotsc, k$. Here the $M_m$ are independent of $x$. \label{rad_smooth}
\end{lemma}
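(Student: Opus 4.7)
The plan is to treat the two assertions independently. For the distance-orthogonality of $\tilde K$, I would fix an affine $d$-plane $E$ and a point $x \notin E$, and swap the order of integration in
\begin{align*}
R_{\tilde K, E, \alpha}(x) = \int_0^\infty \phi(t) \int_E \frac{K(t(x-y))}{|x-y|^{d+\alpha}}\, d\HD^d(y)\, dt;
\end{align*}
this is justified by Fubini since $\int_E |x-y|^{-d-\alpha}\, d\HD^d(y) \lesssim \delta_E(x)^{-\alpha}$ while $K\phi$ is bounded times integrable. The inner integral is then evaluated by the substitution $z = t(x-y)$, which maps $E$ bijectively onto the affine plane $t(x-E)$; crucially this plane misses the origin precisely because $x \notin E$. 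Tracking Jacobian factors yields $t^{\alpha}\int_{t(x-E)} K(z)\, |z|^{-d-\alpha}\, d\HD^d(z)$, which vanishes by the equivalent characterization of distance-orthogonality recalled in the proof of Theorem \ref{dist_orth_existence} --- namely that $K$ integrates to zero against $|z|^{-d-\alpha}\, d\HD^d$ on every affine $d$-plane not containing the origin. Hence $R_{\tilde K, E, \alpha}(x) = 0$.

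For the regularity statement and the bound $M_m$, the pivotal trick is to shift all of the $t$-dependence off of the merely continuous factor $K$. Substituting $\sigma = st$ in the definition of $f_x(t)$ produces
\begin{align*}
f_x(t) = \frac{1}{t}\int_0^\infty K(\sigma x)\, \phi(\sigma/t)\, d\sigma,
\end{align*}
so that $K(\sigma x)$ is now $t$-independent and every $t$-derivative lands on the smooth weight $h(t,\sigma) := t^{-1}\phi(\sigma/t)$. A routine induction using the product and chain rules shows that $\partial_t^m h$ is a finite linear combination of terms of the form $t^{-1-m-i}\sigma^{i}\phi^{(i)}(\sigma/t)$ with $0 \leq i \leq m$. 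Reversing the substitution via $u = \sigma/t$ then collapses the surplus powers of $t$, leaving the clean identity
\begin{align*}
t^m f_x^{(m)}(t) = \sum_{i=0}^m a_{i,m}\int_0^\infty K(utx)\, u^i\, \phi^{(i)}(u)\, du
\end{align*}
for combinatorial constants $a_{i,m}$ depending only on $m$. The integrability hypothesis on $\phi$ then immediately bounds the right-hand side uniformly in both $x$ and $t$ by $M_m := \|K\|_\infty \sum_i |a_{i,m}| \int_0^\infty u^{i}|\phi^{(i)}(u)|\, du$, and differentiation under the integral is justified by dominated convergence because each $|\partial_t^j h|$ is integrable in $\sigma$ uniformly for $t$ in compact subsets of $(0,\infty)$; the same domination yields the $C^k$ regularity of $f_x$.

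No substantive obstruction is expected. The only moment that requires a little care is in the first part, where the change of variables $z = t(x-y)$ must be interpreted as a map between affine $d$-planes, since $t(x-E)$ is typically neither linear nor a Euclidean motion of $E$; its avoidance of the origin --- the sole hypothesis needed for the distance-orthogonality integral to vanish --- follows tautologically from $x \notin E$. The smoothing step is essentially bookkeeping, and its essence is simply that averaging $K$ against dilations with an $L^1$ weight improves radial regularity by precisely one power of $1/t$ per derivative, exactly matching the distance-standard scaling.
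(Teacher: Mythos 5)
Your proposal is correct and follows essentially the same route as the paper: Fubini plus the fact that dilation preserves distance-orthogonality (you phrase this via the change of variables $z=t(x-y)$ onto the affine plane $t(x-E)$ and the affine-plane characterization, while the paper keeps $E$ linear and writes $R_{K_t,E}(x)=t^{\alpha}R_{K,E}(tx)$ — the same computation), and for the regularity the identical substitution shifting the $t$-dependence onto $\phi$, yielding $t^m f_x^{(m)}(t)=\sum_i a_{i,m}\int_0^\infty K(utx)\,u^i\phi^{(i)}(u)\,du$, which is exactly the paper's formula \eqref{e:diffoff} for $m=1$. No gaps.
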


\begin{proof}
Observe that since $K$ is bounded, $\tilde{K}$ is also bounded, with $\|\tilde{K}\|_\infty \le \|K\|_\infty \|\phi\|_1$, and thus $R_{\tilde{K}}$ is well-defined. 

Note that since $K$ is $(d,\alpha)$-distance-orthogonal, the kernel $K_R(x) \equiv K(Rx)$ also is. Indeed, for each $d$-plane $E$ through the origin and each $x \not \in E$, we have 
\begin{align*}
R_{K_R, E}(x) & = \int_E \dfrac{K(R(x-y))}{|x-y|^{d+\alpha}} \; d\HD^d(y)= R^{\alpha} \int_E \dfrac{K(Rx - w)}{|Rx - w|^{d+\alpha}} \; d\HD^d(w) = R^\alpha R_{K, E}(Rx).
\end{align*}
As such, if $R_{K,E} \equiv 0$, then $R_{K_R, E} \equiv 0$ as well.

Now fix $E \in A(n,d)$. By Fubini's theorem, we compute $R_{\tilde{K}}(x)$ for $x \not \in E$:
\begin{align*}
     R_{\tilde{K}}(x) & = \int_{E} \dfrac{\tilde{K}(x-y)}{|x-y|^{d+\alpha}} \; d\HD^d(y) \\
& = \int_E \dfrac{1}{|x-y|^{d+\alpha}} \int_0^\infty K(t(x-y)) \phi(t)\; dt \; d\HD^d(y)  = \int_0^\infty \phi (t) R_{K_t, E} (x) \; dt 
  \equiv 0,
\end{align*}
so that $R_{\tilde{K}}$ is $(d,\alpha)$-distance-orthogonal. This proves the first claim.

Now let us suppose that $\phi \in C^k(0, \infty)$ as above. Then we remark that for $x \ne 0$,
\begin{align*}
\tilde{K}(x) & = \int_0^\infty K(xt) \phi(t) \; dt  = \dfrac{1}{|x|} \int_0^\infty K(xs/|x|) \phi(s/|x|) \; ds.
\end{align*}
It follows that 
\begin{align*}
\tilde{K}(\lambda x) & = \dfrac{1}{\lambda |x|} \int_0^\infty K(xs/|x|) \phi(s/(\lambda |x|)) \; ds.
\end{align*}
The right-hand side is differentiable in $\lambda$ with derivative
\begin{equation}\label{e:diffoff}
\begin{aligned}
\frac{d}{d\lambda} \tilde{K}(\lambda x) =\dfrac{-1 }{\lambda^2 |x|} &  \int_0^\infty K(xs/|x|) \phi(s/(\lambda |x|)) \; ds   - \dfrac{1}{\lambda^3 |x|^2} \int_0^\infty sK(xs/|x|) \phi'(s/(\lambda |x|)) \; ds   \\
& = \dfrac{-1}{\lambda} \left(  \int_0^\infty K(\lambda x t ) \phi(t) \; dt + \int_0^\infty K(\lambda x t) t \phi'(t) \; dt \right) \
\end{aligned}
\end{equation}
which is bounded in absolute value by $\lambda^{-1} \|K\|_\infty \left( \|\phi\|_1 + \|t \phi'(t)\|_1 \right)$. This proves the claim on the first derivative of $\lambda \ra K(\lambda x)$, and the arguments for the higher derivatives of this function follow in the same fashion.
\end{proof}

The following lemma uses the rotation invariance of $\delta_E$ to smooth out $K$ in the tangential directions while preserving the distance-orthogonality.

\begin{lemma}
Let $n,d \in \N$ with $1 \le d < n$ and let $\alpha >0$. Denote by $X$ the special orthogonal group $SO(n)$, and $\nu$ its Haar measure. Let $\phi \in L^1(X, \nu)$, and suppose that $K$ is bounded and $(d,\alpha)$-distance-orthogonal. Then the kernel defined by 
\begin{align*}
     \tilde{K}(x) & = \int_{X} K(Ax) \phi(A) \; d\nu(A)
\end{align*}
is $(d,\alpha)$-distance-orthogonal. Moreover, $\tilde{K}$ satisfies the two following smoothness conditions:

\begin{enumerate}[(I)]
\item If $\phi \in C^k(X)$, then for each $x \ne 0,$  we have that $A \ra K(Ax)$ is in $C^k(X)$ with uniformly bounded Lie derivatives. \label{tang_smooth_cond_1}
\item If for $x \ne 0$, the map $f_x(\lambda) =  K(\lambda x)$ is in $C^k(0, \infty)$, then so is the map $\tilde{f}_x(\lambda) = \tilde{K}(\lambda x)$. Moreover $|\tilde{f}^{(m)}_x(\lambda)| \lesssim \sup_{|y| = |x|}|f^{(m)}_y(\lambda)|$ for $m = 1,2, \dotsc, k$ with constant depending only on $n,d$ and $\|\phi\|_1$.
\end{enumerate} \label{tang_smooth}
\end{lemma}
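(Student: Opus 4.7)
The plan is to mirror the proof of Lemma \ref{rad_smooth}, trading radial dilations for rotations in $SO(n)$ and exploiting the rotation invariance of the Hausdorff measure on $d$-planes through the origin. The fact that $X=SO(n)$ is compact will be convenient throughout, since all integrability issues are absorbed into $\|K\|_\infty$ and $\|\phi\|_{L^1(X,\nu)}$.

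First I would verify that for each fixed $A\in SO(n)$, the rotated kernel $K_A(x)\equiv K(Ax)$ is itself $(d,\alpha)$-distance-orthogonal. For $E\in G(n,d)$ and $x\notin E$, the substitution $w=Ay$ is an isometry from $E$ onto $AE\in G(n,d)$ preserving $\HD^d$, so
\begin{align*}
R_{K_A,E}(x)=\int_E\frac{K(A(x-y))}{|x-y|^{d+\alpha}}\,d\HD^d(y)=\int_{AE}\frac{K(Ax-w)}{|Ax-w|^{d+\alpha}}\,d\HD^d(w)=R_{K,AE}(Ax)=0,
\end{align*}
since $Ax\notin AE$ and $K$ is $(d,\alpha)$-distance-orthogonal. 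Fubini (valid because $|K|\le\|K\|_\infty$ and $\phi\in L^1(X,\nu)$) then gives
\begin{align*}
R_{\tilde K,E}(x)=\int_X\phi(A)\,R_{K_A,E}(x)\,d\nu(A)=0,
\end{align*}
so $\tilde K$ is $(d,\alpha)$-distance-orthogonal as well.

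For property (I), the idea is to transfer the $A$-dependence from the merely bounded factor $K$ onto the smooth density $\phi$ using invariance of the Haar measure. The change of variable $B=A'A$ combined with the right invariance of $\nu$ yields
\begin{align*}
\tilde K(Ax)=\int_X K(A'Ax)\phi(A')\,d\nu(A')=\int_X K(Bx)\phi(BA^{-1})\,d\nu(B).
\end{align*}
For each fixed $B\in X$, the map $A\mapsto\phi(BA^{-1})$ is $C^k$ on the compact Lie group $X$ with Lie derivatives uniformly controlled by the corresponding Lie derivatives of $\phi$. Differentiating under the integral (legitimate since the integrand is dominated by $\|K\|_\infty$ times a uniform bound on the Lie derivatives of $\phi$) then shows that $A\mapsto\tilde K(Ax)$ is $C^k$, with Lie derivatives of each order controlled by a constant depending only on $\|K\|_\infty$ and the $C^k$-norm of $\phi$, independently of $x\ne 0$.

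Property (II) follows by differentiating under the integral in the radial variable. Since $|Ax|=|x|$ for $A\in SO(n)$, one may write $\tilde f_x(\lambda)=\int_X f_{Ax}(\lambda)\phi(A)\,d\nu(A)$. Under the hypothesis that $f_y\in C^k(0,\infty)$ for every $y$ with $|y|=|x|$, iterating the dominated convergence theorem gives
\begin{align*}
\tilde f_x^{(m)}(\lambda)=\int_X f_{Ax}^{(m)}(\lambda)\phi(A)\,d\nu(A),\qquad m=1,\dots,k,
\end{align*}
whence $|\tilde f_x^{(m)}(\lambda)|\le\|\phi\|_{L^1(X,\nu)}\sup_{|y|=|x|}|f_y^{(m)}(\lambda)|$, as required. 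The most delicate point is the differentiation under the integral in (I): it works only because the Haar-invariance identity concentrates all of the regularity onto $\phi$, leaving the unregularized factor $K(Bx)$ outside the derivative and allowing $\|K\|_\infty$ to serve as a universal majorant.
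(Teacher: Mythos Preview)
Your proof is correct and follows essentially the same route as the paper's: you verify that each rotated kernel $K_A$ is $(d,\alpha)$-distance-orthogonal via the isometry $E\to AE$, then use Fubini for the orthogonality of $\tilde K$, the Haar-invariance identity $\tilde K(Ax)=\int_X K(Bx)\phi(BA^{-1})\,d\nu(B)$ to transfer regularity to $\phi$ for (I), and differentiation under the integral together with $|Ax|=|x|$ for (II). The paper's argument is the same in all essential points.
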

\begin{proof}
With $\tilde{K}$ as above, we have that $\tilde{K}$ is bounded, since $\|\tilde{K}\|_\infty \le \|K\|_\infty \|\phi\|_1$, and thus $R_{\tilde{K}}$ is well-defined. 

The first part of the proof is quite similar to the argument of Lemma~\ref{rad_smooth}. Indeed, for each $d$-plane $E$ through the origin and each $x \not \in E$, we have 
\begin{align*}
R_{K_A, E}(x) & = \int_E \dfrac{K(A(x-y))}{|x-y|^{d+\alpha}} \; d\HD^d(y)   =  \int_{AE} \dfrac{K(Ax - w)}{|Ax - w|^{d+\alpha}} \; d\HD^d(w)  =  R_{K, AE}(Ax).
\end{align*}
Here we have used the fact that $A$ preserves Euclidean distance. Thus $K_A$ is $(d,\alpha)$-distance orthogonal whenever $K$ is. 

Now fix $E \in A(n,d)$. Invoking Fubini's theorem, we compute $R_{\tilde{K}}(x)$ for $x \not \in E$:
\begin{align*}
     R_{\tilde{K}, E}(x) & = \int_{E} \dfrac{\tilde{K}(x-y)}{|x-y|^{d+\alpha}} \; d\HD^d(y)  = \int_E \dfrac{1}{|x-y|^{d+\alpha}} \int_{X} K(A(x-y)) \phi(A) d\nu(A) \; d\HD^d(y) \\
& = \int_{X}  \phi (A) \int_E \dfrac{K(A(x-y))}{|x-y|^{d+\alpha}} \; d \HD^d(y) \; d\nu(A) = \int_{X} \phi (A) R_{K_A, E} (x) \; d\nu(A)  \equiv 0,
\end{align*}
so that $R_{\tilde{K}}$ is $(d,\alpha)$-distance-orthogonal. This proves the first claim.

Now let us suppose that $\phi \in C^k(X)$. Fix $x \in \R^n, x \ne 0$. Since $\nu$ is a Haar measure, we have
\begin{align*}
\tilde{K}(Bx) & = \int_{X} K(ABx) \phi(A) \; d \nu(A)   = \int_{X} K(Ax) \phi(AB^{-1}) \; d\nu(A).
\end{align*}
Since $\phi \in C^k(X)$, it is easy to see that the map $B \ra \int_X K(Ax) \phi(AB^{-1}) \; d\nu(A)$ also is, which gives the desired smoothness.

To verify that $\tilde{K}$ stays smooth in the radial direction (i.e. statement (II)), we compute
\begin{align*}
h^{-1} \left( \tilde{f}_x(\lambda + h) - \tilde{f}_x(\lambda) \right) & = \int_X h^{-1} \left( K((\lambda+h) Ax) - K(\lambda Ax)  \right) \phi(A) \; d\nu (A) \\
& = \int_X h^{-1} \left( f_{Ax}(\lambda +h) - f_{Ax}(\lambda) \right) \phi(A) \; d\nu(A),
\end{align*}
whence
\begin{align*}
\tilde{f}_x'(\lambda) & = \int_X f_{Ax}'(\lambda) \phi(A) d \nu(A).
\end{align*}
Since $|Ax| = |x|$ for $A \in X$, we also conclude the estimate on $|\tilde{f}_x'(\lambda)|$. The same argument is used to prove the statement for the higher order derivatives of $\tilde{f}$, and thus (II) is proved.
\end{proof}

Using these two smoothing lemmas we are ready prove Theorem \ref{da_dist_exact_existence}:

\begin{proof}[Proof of Theorem \ref{da_dist_exact_existence}]
In view of Lemma \ref{lemma:reduc_orth}, it suffices to show that one can construct a non-zero, smooth $(d,\alpha)$-distance-orthogonal kernel with the same smoothness.
This is essentially a combination of the two Lemmas above.

By Theorem \ref{dist_orth_existence}, we may choose a $(d,\alpha)$-distance-orthogonal kernel $K \in C_0(\R^n \setminus \{0\}) \cap L^\infty(\R^n)$ such that for some $x_0 \ne 0$, $K(x_0) \ne 0$. Choose $\phi \in C_c^\infty(0, \infty)$ so that 
$$\int_0^\infty K(tx_0) \phi(t) \;dt \ne 0.$$ By Lemma \ref{rad_smooth} (since $\phi$ is smooth with compact support in $(0,\infty)$) the kernel $K_1(x) \equiv \int_0^\infty K(tx)\phi(t) \; dt $ satisfies the following:
\begin{align}
\begin{cases}
&\text{for each  $x \ne$ 0}, t \ra K_1(tx) \in C^\infty(0, \infty),  \\
& \sup_{x \ne 0, t \in(0,\infty)} \left|t^m \dfrac{d^m}{dt^m} K_1(tx)\right| < \infty \text{ for } m \in \N, \\
& K_1(x_0) \ne 0, \\
& K_1 \text{ is } (d,\alpha)$-distance-orthogonal$.
\end{cases}
\end{align}

Denote by $X$ the special orthogonal group $SO(n)$, and $\nu$ its Haar measure. Next, choose $\psi \in C_c^\infty(X)$ so that $\int_{X} K_1(A x_0) \psi(A) \; d\nu(A) \ne 0$. Then by Lemma \ref{tang_smooth}, the kernel $K_2 \equiv \int_{X} K_1(Ax) \phi(A) \; d\nu(A)$ satisfies

\begin{align}
\begin{cases}
&\text{for each  $x\ne$ 0}, A \ra K_2(Ax) \in C^\infty(X),  \\
&\text{for each  $x\ne$ 0}, t \ra K_2(tx) \in C^\infty(0, \infty),  \\
& \sup_{x \ne 0, t \in(0,\infty)} \left|t^m \dfrac{d^m}{dt^m} K_2(tx)\right| < \infty  \text{ for } m \in \N, \\
& K_2(x_0) \ne 0, \\
& K_2 \text{ is } (d,\alpha)$-distance-orthogonal$.
\end{cases}
\end{align}

Since $K_2$ is smooth in the radial and tangential directions, we can conclude $K_2 \in C^\infty(\R^n \setminus \{0\})$. The bounds on the radial and tangential derivatives of $K_2$ coming from Lemmas \ref{rad_smooth} and \ref{tang_smooth} give us the required bounds on $|\nabla^m K_2(x)|\, |x|^m$.
\end{proof}

The next part of Theorem \ref{t:main} asks that the distance-exact kernels we construct be ``far" from constant at the origin. We do this by employing a scaling argument to show that the kernels we constructed above can be taken not to be ``close" to constant.

We begin with the observation that the kernels constructed as in the proof of Theorem \ref{da_dist_exact_existence} decay at $0$ and $\infty$ together with their properly normalized derivatives. 

\begin{lemma}\label{lem:decay}
For each choice of  $n \in \N, d\in \N$ with $ 1 \le d < n$ and $\alpha >0$, there exists a $(d,\alpha)$-distance-orthogonal kernel $0 \ne K \in L^\infty(\R^n) \cap C^\infty(\R^n \setminus \{0\})$ so that $|\nabla^m K(x) |x|^m| \in L^\infty( \R^n)$ for $m \ge 0$ and so that \begin{equation}\label{e:decay} \limsup_{|x|\rightarrow \infty} \sum_{m=0}^p |\nabla^m K(x) |x|^m| = 0 = \limsup_{|x|\rightarrow 0} \sum_{m=0}^p |\nabla^m K(x) |x|^m|.\end{equation}
for every $p \in \N$.
\end{lemma}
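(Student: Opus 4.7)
The plan is to take the non-zero $(d,\alpha)$-distance-orthogonal kernel $K_0 \in C_0(\R^n\setminus\{0\})\cap L^\infty(\R^n)$ produced by Theorem~\ref{dist_orth_existence} and run the two-step smoothing of the proof of Theorem~\ref{da_dist_exact_existence} using \emph{compactly supported} bump functions. Concretely, fix $\phi \in C_c^\infty(0,\infty)$ with $\mathrm{supp}\,\phi \subset [a,b] \subset (0,\infty)$ and $\psi \in C_c^\infty(SO(n))$, and put
\begin{equation*}
K(x) := \int_{SO(n)}\!\int_0^\infty K_0(tAx)\,\phi(t)\,\psi(A)\,dt\,d\nu(A).
\end{equation*}
As in the proof of Theorem~\ref{da_dist_exact_existence}, $\phi$ and $\psi$ may be chosen to be narrow approximate identities near $t=1$ and $A=\mathrm{Id}$ so that $K(x_0) \neq 0$ at any given $x_0$ with $K_0(x_0) \neq 0$. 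Lemmas~\ref{rad_smooth} and~\ref{tang_smooth} then give $K \in C^\infty(\R^n \setminus \{0\})$, that $K$ is $(d,\alpha)$-distance-orthogonal, and $|\nabla^m K(x)|\,|x|^m \in L^\infty(\R^n)$ for every $m \geq 0$. The new content is the decay statement \eqref{e:decay}, which will be extracted from the fact that $K_0$ itself vanishes at $0$ and $\infty$.

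The key structural observation is that for every multi-index $\beta$, the function $|x|^{|\beta|}\partial^\beta K(x)$ admits a finite representation
\begin{equation*}
|x|^{|\beta|}\,\partial^\beta K(x) = \sum_j \chi_j\!\bigl(\tfrac{x}{|x|}\bigr)\int_{SO(n)}\!\int_0^\infty K_0(tAx)\,\theta_j(t)\,\Psi_j(A)\,dt\,d\nu(A),
\end{equation*}
with $\chi_j \in C^\infty(S^{n-1})$, $\theta_j \in C_c^\infty(0,\infty)$ with $\mathrm{supp}\,\theta_j \subseteq [a,b]$, and $\Psi_j \in C^\infty(SO(n))$. To see this, use the pointwise identity
\begin{equation*}
|x|\,\partial_i \;=\; \tfrac{x_i}{|x|}\,\Lambda - \sum_{j}\tfrac{x_j}{|x|}\,X_{ij},
\end{equation*}
where $\Lambda := x\cdot\nabla$ is the Euler field and $X_{ij} := x_i\partial_j - x_j\partial_i$ are the infinitesimal generators of $SO(n)$. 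Iterating this identity expresses $|x|^{|\beta|}\partial^\beta$ as a polynomial in $\Lambda$ and the $X_{ij}$'s with $0$-homogeneous smooth coefficients, since both $\Lambda$ and $X_{ij}$ preserve the class of $0$-homogeneous smooth functions on $\R^n\setminus\{0\}$. Applied to $K$, the Euler field integrates by parts onto the $t$-variable exactly as in \eqref{e:diffoff}:
\begin{align*}
\Lambda K(x) &= \int_{SO(n)}\!\int_0^\infty t\,\partial_t\bigl[K_0(tAx)\bigr]\,\phi(t)\psi(A)\,dt\,d\nu(A) \\
&= -\!\int_{SO(n)}\!\int_0^\infty K_0(tAx)\bigl(\phi(t)+t\phi'(t)\bigr)\psi(A)\,dt\,d\nu(A),
\end{align*}
producing a new $\theta \in C_c^\infty(0,\infty)$ with support still in $[a,b]$; each $X_{ij}$ is transferred to a right-invariant derivative $\widetilde{X}_{ij}\psi$ on $SO(n)$ by the Haar-invariance trick from the proof of Lemma~\ref{tang_smooth}. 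Iterating $|\beta|$ times yields the claimed representation.

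The decay \eqref{e:decay} now follows by dominated convergence. Given $\varepsilon>0$, since $K_0 \in C_0(\R^n\setminus\{0\})$ there exists $\eta>0$ with $|K_0(y)|<\varepsilon$ whenever $|y|<\eta$ or $|y|>\eta^{-1}$. For $|x|<\eta/b$ and any $(t,A) \in [a,b]\times SO(n)$ we have $|tAx| = t|x| \leq b|x| <\eta$, so each integrand in the representation is uniformly bounded by $\varepsilon\|\theta_j\|_{L^1}\|\Psi_j\|_{L^\infty}$; combined with the boundedness of $\chi_j$ on $S^{n-1}$, this gives $|x|^{|\beta|}|\partial^\beta K(x)| \to 0$ as $|x|\downarrow 0$. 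The analogue for $|x|\to\infty$ is identical, using $|tAx| \geq a|x|$. Summing over $|\beta|\leq p$ produces \eqref{e:decay}. The main obstacle is the polar-coordinate bookkeeping needed to establish the representation formula; after that step the decay reduces to a routine dominated-convergence argument exploiting that $K_0 \in C_0(\R^n\setminus\{0\})$.
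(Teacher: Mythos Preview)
Your proof is correct and follows essentially the same strategy as the paper: build $K$ from the $C_0$-kernel of Theorem~\ref{dist_orth_existence} via the two smoothing steps with compactly supported bumps, then transfer each derivative off $K_0$ and onto $\phi$ or $\psi$ so that the integrand is always $K_0(tAx)$ against a fixed $L^1$ weight, whence decay follows from $K_0\in C_0(\R^n\setminus\{0\})$ and $t\in[a,b]$. The paper carries this out explicitly only for the radial derivative (via \eqref{e:diffoff}) and leaves the remaining derivatives to ``continuing the construction,'' while your polar decomposition $|x|\partial_i=\tfrac{x_i}{|x|}\Lambda-\sum_j\tfrac{x_j}{|x|}X_{ij}$ packages the full argument for all $\nabla^m K(x)|x|^m$ in one stroke.
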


\begin{proof}
As in the proof of Theorem \ref{da_dist_exact_existence}, we start with a $K \in C_0(\mathbb R^n \backslash \{0\}) \cap L^\infty(\mathbb R^n)$ and construct $K_1(x) = \int_0^\infty K(tx)\phi(t)\, dt$ where $\phi \in C^\infty_c((0,\infty))$. Defining, as above, $f_x(\lambda) = K_1(\lambda x)$ we will show that for each $m \ge 0$ $$\limsup_{|x|\rightarrow \infty} \left| \left( \frac{d^m f_x(\lambda)}{d\lambda^m} \right)(1) \right| = 0 = \limsup_{|x|\rightarrow 0} \left| \left(\frac{d^m f_x(\lambda)}{d\lambda^m} \right) (1) \right|.$$ The result follows by continuing the construction as in the proof of Theorem \ref{da_dist_exact_existence} and the estimate above.

We do the case when $m = 1$, the others follow similarly. We recall from \eqref{e:diffoff} above that $$ \left|\lambda\frac{d}{d\lambda}f_x(\lambda)\right| \leq   \left| \int_0^\infty K(\lambda x t ) \phi(t) \; dt\right| + \left|\int_0^\infty K(\lambda x t) t \phi'(t) \; dt \right|.$$
Fixing $\lambda =1$ we notice that if $|x| \rightarrow 0, +\infty$ but $t\in \mathrm{spt} \phi$ then $\lambda t |x| \rightarrow 0, + \infty$ and indeed does so uniformly in all $t\in \mathrm{spt} \phi$. Since $K \in C_0(\mathbb R^n\backslash \{0\})$ this implies that $K(\lambda xt) \rightarrow 0$ and thus both integrals converge to zero in the limit, so we are done.
\end{proof}

We are now ready to address the second part of Theorem \ref{t:main}:

\begin{thm}\label{lem:dist_exact_nv}
For each choice of  $n \in \N, d\in \N$ with $ 1 \le d < n$ and $\alpha >0$ and $p \in \N$, there exists a $(d,\alpha)$-distance-orthogonal kernel $0 \ne K \in L^\infty(\R^n) \cap C^p(\R^n \setminus \{0\})$ so that $|\nabla^m K(x) |x|^m| \in L^\infty( \R^n)$ for $0 \le m \le p$ and, furthermore, \begin{align*}
\limsup_{|x| \ra 0} |K( x)| > 0.
\end{align*}
\end{thm}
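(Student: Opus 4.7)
The plan is to construct $K$ as an infinite geometric series of rescaled copies of a distance-orthogonal base kernel, exploiting the scale-invariance of the distance-orthogonality property. Specifically, starting from a distance-orthogonal $K_0 \in C^p(\R^n \setminus \{0\}) \cap L^\infty(\R^n)$ with bounded weighted derivatives and---crucially---polynomial decay at $0$ and $\infty$, I would normalize so that $K_0(x_0) = 1$ for some $x_0$ with $|x_0| = 1$ and define
\[
K(x) := \sum_{k=0}^\infty K_0(N^k x)
\]
for a large constant $N > 1$ to be chosen. Each summand $K_0(N^k \cdot)$ is itself $(d,\alpha)$-distance-orthogonal by the scaling argument in the proof of Lemma~\ref{rad_smooth}, so---provided the series converges---$K$ is too. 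The mechanism for the conclusion $\limsup_{|x| \to 0} |K(x)| > 0$ is that $K_0(N^k \cdot)$ has a peak of height $1$ at $y_k := x_0/N^k$, and when $N$ is large relative to the decay of $K_0$ these peaks are sufficiently separated in scale that they do not cancel each other.

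Evaluating at $y_j = x_0/N^j$, one finds
\[
K(y_j) = \sum_{k \geq 0} K_0(N^{k-j} x_0) = \sum_{l \geq -j} K_0(N^l x_0) \xrightarrow{j \to \infty} 1 + \sum_{l \neq 0} K_0(N^l x_0).
\]
Under polynomial decay $|K_0(y)| \leq C (1+|y|)^{-M}(1+|y|^{-1})^{-M}$ for some $M > 0$, one has $|K_0(N^l x_0)| \leq C N^{-|l|M}$, and the tail sum is bounded by $2C/(N^M - 1)$, which is smaller than $1/2$ once $N$ is large. Hence $\lim_{j \to \infty} K(y_j) \geq 1/2$ and, because $|y_j| = N^{-j} \to 0$, we obtain $\limsup_{|x| \to 0} |K(x)| \geq 1/2 > 0$. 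The same decay estimates, applied to $F_m(y) := |\nabla^m K_0(y)| \cdot |y|^m$, show that the series and its first $p$ weighted derivatives converge uniformly on $\R^n \setminus \{0\}$ and are uniformly bounded there, giving $|\nabla^m K(x)| \, |x|^m \in L^\infty(\R^n)$ for $0 \le m \le p$; distance-orthogonality of $K$ follows from that of each summand by continuity.

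The main obstacle is that Lemma~\ref{lem:decay} only produces $K_0$ with unquantified decay at $0$ and $\infty$, which is too weak to guarantee the convergence needed above. To produce a $K_0$ with polynomial decay I would rerun the Hahn-Banach argument of Theorem~\ref{dist_orth_existence} in the weighted Banach space
\[
X_M := \Bigl\{K \in C(\R^n \setminus \{0\}) : \sup_{x \neq 0} (1+|x|)^M (1+|x|^{-1})^M |K(x)| < \infty \Bigr\}
\]
for some $M > \alpha$. A direct calculation shows that in this range each measure $\mu_E = |x|^{-d-\alpha} \HD^d|_E$ (with $0 \notin E \in A(n,d)$) is a bounded linear functional on $X_M$ whose norm $\|\mu_E\|_{X_M^*} = \int_E w(x)^{-1} |x|^{-d-\alpha} \, d\HD^d(x)$ (with $w(x) = (1+|x|)^M(1+|x|^{-1})^M$) is bounded above \emph{and} below by positive constants uniformly in affine $d$-planes $E$ with $\mathrm{dist}(0,E) \lesssim |x_0|$. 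This uniform lower bound is precisely what is needed to replicate the ratio estimate $|\nu_i|(B(x_0, \epsilon)) / \|T_i\|_{X_M^*} \lesssim \epsilon^d \to 0$ uniformly in $i$, showing that $\delta_{x_0}$ lies outside the weak-star closed span of $\{\mu_E\}$ in $X_M^*$ and yielding by Hahn-Banach a nonzero distance-orthogonal $K \in X_M$ with built-in polynomial decay of rate $M$. The subsequent smoothing via Lemmas~\ref{rad_smooth} and~\ref{tang_smooth} preserves polynomial decay (since averaging against $\phi \in C_c^\infty((0,\infty))$ or $\psi \in C^\infty(SO(n))$ inherits the weighted $L^\infty$ estimate) and furnishes the $C^p$ regularity. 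I expect the key technical step to be this uniform bookkeeping of $\|\mu_E\|_{X_M^*}$, which is what makes the ratio argument survive passage to the weighted space.
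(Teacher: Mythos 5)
Your proposal is correct, and while it shares the paper's overall architecture (a nonzero distance-orthogonal kernel via Hahn--Banach duality, smoothing, then a sum of rescaled copies whose peaks accumulate at the origin without cancelling), the implementation of the summation step is genuinely different. The paper keeps only the \emph{qualitative} decay of Lemma \ref{lem:decay} and compensates by choosing the dilation ratios $b_\ell/a_\ell$ adaptively, so that the $\ell$-th rescaled copy is bounded by $\epsilon 2^{-\ell}$ outside a prescribed annulus $I_\ell$ and the annuli are pairwise disjoint; the geometric weights $2^{-\ell}$ then do exactly the work that your polynomial decay does, and no strengthening of Theorem \ref{dist_orth_existence} is needed. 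You instead fix a single ratio $N$ and pay for it by upgrading the existence theorem to produce quantitative decay, which forces you to rerun the duality argument in the weighted space $X_M$. That step does go through: the only places where the structure of the norm enters are the identity $\|T_i\|_{X_M^*}=\sum_j|a_j^i|\int w^{-1}\,d\mu_{E_j^i}$, which survives because distinct planes are still mutually singular, and the ratio estimate, where one only needs the numerator bound $\int_{B(x_0,\epsilon)}w^{-1}\,d\mu_E\lesssim_{x_0}\epsilon^d$ together with a \emph{lower} bound on $\int w^{-1}\,d\mu_E$ uniform over planes meeting $B(x_0,\epsilon)$ (obtained from the annular portion of $E$ at distance comparable to $|x_0|$ from the origin; the uniform upper bound on $\|\mu_E\|_{X_M^*}$, and hence the restriction $M>\alpha$, is not actually needed). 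The smoothing Lemmas \ref{rad_smooth} and \ref{tang_smooth} do preserve the weighted bound since $\phi$ has compact support in $(0,\infty)$ and $w(tx)\simeq w(x)$ there, and passing distance-orthogonality to the infinite sum is justified exactly as in the paper via Lemma \ref{kernellemma}. The trade-off: your route yields a cleaner, fully explicit series with the quantitative lower bound $1-2C/(N^M-1)$ at the peaks, at the cost of redoing the functional analysis in a weighted space, whereas the paper's adaptive-scale trick reuses Theorem \ref{da_dist_exact_existence} verbatim.
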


We observe that adding such a kernel $K$ to any large enough constant gives the desired $(d,\alpha)$-distance exact kernel in Theorem \ref{t:main}.

\begin{proof}
Construct $K_0$ a non-zero $(d,\alpha)$-distance-orthogonal kernel, as in Theorem \ref{da_dist_exact_existence} which satisfies the estimate
\begin{align}
\sup_{x \ne 0} |\nabla^m K_0(x) | \, |x|^m = M_m < \infty \label{eqn:mod4}
\end{align}
for every $m \in \N \cup \{0\}$. Define $g_{K} : \R^n \setminus \{0\} \ra [0, \infty)$ by $g_{K}(x) = \sum_{\ell =0}^{p+1} |\nabla^\ell K(x)| \, |x|^\ell $. By Lemma \ref{lem:decay} we may assume $$\limsup_{|x| \downarrow 0} g_{K_0}(x) = 0 = \limsup_{|x|\uparrow \infty} g_{K_0}(x).$$ Furthermore, upon a harmless dilation and scalar multiplication (which both preserve distance orthogonality) we can assume that there is $x_0\in \mathbb S^{n-1}$ such that $K_0(x_0) = 1$. 

Our $K$ will be the limit of $K_j$ which are constructed iteratively. Let $\epsilon > 0$ be small. Choose a sequence $a_\ell$ decreasing monotonically to zero and $b_\ell$ increasing monotonically such that $a_\ell < 1 < b_\ell$ and $a_{\ell+1} \leq a_{\ell}^2/b_\ell$ and, finally, if $|x| \not\in (a_{\ell}, b_{\ell})$, then $g_{K_0}(x) \leq \epsilon 2^{-\ell}$. Define $$\tilde{K}_\ell(x) = K_0\left(\frac{b_\ell}{a_\ell}x\right),$$ and note, by scale invariance that 
\begin{equation}\label{e:gtildek}
g_{\tilde{K}_{\ell}}(x) \le \epsilon 2^{-\ell} \text{ for all } x \ne 0, |x| \not \in (a_{\ell}^2/b_{\ell}, a_{\ell}) \equiv I_{\ell}.
\end{equation}
We now define $K_j$ by \begin{equation}\label{e:KJdef}
K_j(x) = K_0 + \sum_{k=1}^{j} \tilde{K}_k(x).
\end{equation}
Note that $K_j$ is $(d,\alpha)$-distance orthogonal as it is the sum of distance orthogonal kernels. 

We want to show the following:
\begin{equation}\label{e:Kjcondition}
\begin{aligned}
g_{K_j}(x) \leq &\, 2\sup g_{K_0} + \epsilon,\\
K_j\left(\frac{a_\ell}{b_\ell}x_0\right) \geq &\, 1-\epsilon, \qquad \forall \ell \leq j.
\end{aligned}
\end{equation}
To give an upper bound on the estimate of $g_{K_j}$ we use the upper bound in \eqref{e:gtildek}, the triangle inequality, the disjointness of the $I_\ell$ and the scale invariance of the definition of $g$ to say
$$\begin{aligned} g_{K_j}(x) \leq&\, g_{K_0}(x) + \sum_{\ell = 1}^{j} g_{\tilde{K}_\ell}(x) \\
\leq& \,g_{K_0}(x) + \max_k g_{\tilde{K}_k}(x) + \sum_{\ell =1}^{j} \epsilon 2^{-\ell} \leq 2\sup g_{K_0} + \epsilon.
\end{aligned}$$

To get the lower bound on $K_j$ at the sequence of points $x_\ell := \frac{a_\ell}{b_\ell}x_0$ we observe that $\tilde{K}_k(x) \leq g_{\tilde{K}_k}(x)$ (and similarly for $K_0$)  and that if $k \neq \ell$ then $x_\ell \notin I_k$ to conclude that $$\begin{aligned} K_j\left(x_\ell\right) \geq& \,\tilde{K}_\ell(x_\ell) - \sum_{k \neq \ell} |\tilde{K}_k(x_\ell)| - |K_0(x_\ell)|\\
\geq& \, K_0(x_0) - \sum_{k\neq \ell} g_{\tilde{K}_k}(x_\ell) - g_{K_0}(x_\ell)
\geq K_0(x_0) -\sum_{k=1}^\infty \epsilon 2^{-k} = 1-\epsilon.\end{aligned}$$

Having proven the two conditions in \eqref{e:Kjcondition} we invoke Arzela-Ascoli and a standard diagonalization argument to say that from $K_j$ we may extract a subsequence $K_{j_\ell}$ and a limiting function $K \in C^{p}(\R^n \setminus \{0\})$ for which $K_{j_\ell} \ra K$ in $C^{p}_{loc}(\R^n \setminus \{0\})$, and 
\begin{align*}
\sum_{\ell = 0}^{p} |\nabla^\ell K(x)| \, |x|^\ell  \le 2 \sup g_{K_0}(y) + \epsilon.
\end{align*}
One may apply Lemma \ref{kernellemma} and the fact that each $K_{j_\ell}$ is $(d,\alpha)$-distance-orthogonal to deduce that $K$ is $(d,\alpha)$-distance-orthogonal. Moreover, \eqref{e:Kjcondition} implies that the limiting kernel $K$ has $K(x_k) \ge 1 - \epsilon$, for each $k$. Since $|x_k| \ra 0$, we have that $K$ is our desired kernel.
\end{proof}

\section{A perturbation theory for regularized distance kernels}
\label{sec:perturbative}
In this section we ask the perturbation question: if $K$ is ``close" to a distance-exact kernel does the oscillation of $|\nabla D_K|$ characterize good geometry 
and, vice versa, if the oscillation of $|\nabla D_K|$ characterizes good geometry must it be that $K$ is close to being distance exact?  Interestingly, using Theorem \ref{lem:dist_exact_nv} we show that just because $|\nabla D_K|$ characterizes good geometry does not mean it is a perturbation of a single distance exact kernel (cf. Theorem \ref{t:nonradialnt} below).

On the other hand, under the additional assumption of radial symmetry, which by Theorem \ref{thm:dist_exact_radial} simplifies the space of distance exact kernels, we are able to show that the oscillations of $|\nabla D_K|$ characterize the geometry of $E$ when $K$ is a perturbation of a constant; what we mean by perturbation depends on the context and we make it precise below. We also establish some weaker results in the absence of radial symmetry.

Finally,  as alluded to above, the direction ``good control on $|\nabla D_K|$ implies good geometry of $E$" holds for essentially all distance standard kernels $K$. This is because good control on $|\nabla D_K|$ actually implies that $K$ is close to being distance exact (cf. Corollaries \ref{c:structuredlimits} and \ref{c:almostconverse}).

\subsection{Non-tangential limits and rectifiability.}

Let us first address the question of rectifiability of $\mu$ in terms of non-tangential limits of $|\nabla D_{K, \mu}|$. We want to use compactness techniques so we first establish that if $K_i \ra K_\infty$ in the appropriate sense, then $R_{K_i} \ra R_{K_\infty}$ and $D_{K_i} \ra D_{K_\infty}$.
\begin{lemma}
Suppose that $\mu_i$ are a sequence of uniformly $d$-Ahlfors regular measures with supports $E_i$ such that $\mu_i \rightharpoonup \mu_\infty$. Let $E_\infty$ be the support of $\mu_\infty$. Suppose in addition that  $K_i \in C^k(\R^n \setminus \{0\})$ with $k \ge 0$,
\begin{align*}
    \sum_{j=0}^k  \sup_i \| \nabla^j K_i(x) |x|^j \|_\infty =: M  <\infty,
\end{align*}
and such that $K_i \ra K_\infty$ in $C^k_{loc}(R^n \setminus\{0\})$. It follows then that \begin{equation*}
    R_i(z) \equiv R_{K_i, \mu_i, \alpha} := \int_{E_i} \dfrac{K_i(z-w)}{|z-w|^{d+\alpha}} \; d\mu_i(w)
\end{equation*}
converges to
\begin{equation*}
    R_\infty(z) \equiv R_{K_\infty, \mu_\infty,\alpha} := \int_{E_\infty} \dfrac{K_\infty(z-w)}{|z-w|^{d+\alpha}} \; d\mu_\infty(w)
\end{equation*}
in $C^k_{loc}(\mathbb R^n \backslash E_\infty)$. The same holds true for $D_i = R_i^{-1/\alpha}$ and $D_\infty = R_\infty^{-1/\alpha}$.  \label{kernellemma}
\end{lemma}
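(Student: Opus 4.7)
Fix a compact set $A \Subset \R^n \setminus E_\infty$ and let $\delta := \mathrm{dist}(A, E_\infty) > 0$. The first step is a uniform separation: I claim $\mathrm{dist}(A, E_i) \ge \delta/2$ for all sufficiently large $i$. This follows because the weak convergence $\mu_i \rightharpoonup \mu_\infty$ combined with the uniform lower Ahlfors regularity of the $\mu_i$ forces the Hausdorff convergence of $E_i$ to $E_\infty$ on compacta. Indeed, if $w_i \in E_i$ accumulates at some $w_0 \in \overline{A}$, testing against a continuous bump supported near $w_0$ yields $\mu_\infty(B(w_0, r)) \ge c r^d$ for all small $r$, forcing $w_0 \in E_\infty$, which contradicts $\mathrm{dist}(A, E_\infty) = \delta$.

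The second step is pointwise convergence $R_i(z) \to R_\infty(z)$ for $z \in A$. For $R \gg 1$, upper Ahlfors regularity together with the uniform bound $\|K_i\|_\infty \le M$ gives the tail estimate
\begin{equation*}
\int_{|z-w| > R} \frac{|K_i(z-w)|}{|z-w|^{d+\alpha}}\, d\mu_i(w) \lesssim M \, R^{-\alpha},
\end{equation*}
uniformly in $i$ and $z \in A$. On the truncated region $\{\delta/4 \le |z-w| \le R\}$, I use a smooth radial cutoff $\chi_R(|z-w|)$ so that the integrand becomes $w \mapsto \chi_R(|z-w|) K_i(z-w)/|z-w|^{d+\alpha}$, which is continuous and compactly supported. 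Since $K_i \to K_\infty$ uniformly on the compact annulus $\{\delta/4 \le |y| \le 2R\}$, one passes to the limit by adding and subtracting: the $\mu_i$-integral of the difference $(K_i - K_\infty)(z-w)/|z-w|^{d+\alpha}$ is $o(1)$ by uniform convergence and uniform mass bound of $\mu_i$ on $B(z, R)$, and the $\mu_i$-integral against the fixed continuous test function $K_\infty(z-w)\chi_R/|z-w|^{d+\alpha}$ converges to the $\mu_\infty$-integral by weak convergence. Letting $R \to \infty$ kills the tails, giving pointwise convergence.

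The third step extends this to $C^k_{loc}$ convergence. For any multi-index with $|\beta| = j \le k$, differentiation under the integral is justified (using $\mathrm{dist}(z, E_i) \ge \delta/2$ and the bounds $|\nabla^m K_i(x)| \le M |x|^{-m}$) and yields
\begin{equation*}
\nabla^\beta_z R_i(z) = \int \nabla^\beta_z\!\left[ \frac{K_i(z-w)}{|z-w|^{d+\alpha}} \right] d\mu_i(w).
\end{equation*}
By the Leibniz rule, the integrand is bounded by $C_j M |z-w|^{-d-\alpha-j}$ uniformly in $i$, so the same tail/truncation argument applies verbatim and gives pointwise convergence of $\nabla^\beta R_i(z) \to \nabla^\beta R_\infty(z)$ on $A$. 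To upgrade to uniform convergence, I either use the extra derivative estimate on the $(j{+}1)$-st derivative in the range $j < k$ to get equicontinuity and apply Arzel\`a--Ascoli, or I observe that the bound on the integrand is uniform in $z \in A$, $w \in E_i$, allowing one to split $R_i - R_\infty$ into tail (small uniformly in $z$) plus a bulk term whose convergence is uniform in $z$ by equicontinuity of the family $\{K_i(z-\cdot)\chi_R\}_{i,z}$ on a compact set.

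Finally, for $D_i$: since $R_\infty \ge c(A) > 0$ on $A$ (upper bound $D_\infty \simeq \delta_{E_\infty}$ via the dyadic shells argument already invoked in the paper), and since $R_i \to R_\infty$ in $C^k(A)$, the smooth function $t \mapsto t^{-1/\alpha}$ applied near the interval $[c(A)/2, \sup_A R_\infty + 1]$ yields $D_i \to D_\infty$ in $C^k(A)$ by the chain rule. The main technical nuisance is the one in step two, namely the simultaneous dependence of the integrand and the measure on $i$; this is handled routinely by the add-and-subtract trick, using that $K_i \to K_\infty$ uniformly on any fixed annulus away from the origin.
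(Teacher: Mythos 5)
Your proof is correct and follows essentially the same route as the paper's: a tail estimate from uniform Ahlfors regularity via dyadic shells, a truncation, and an add-and-subtract argument combining uniform convergence of $K_i$ on compact annuli with weak convergence of $\mu_i$ against a fixed continuous, compactly supported test function, with the derivative and $D_i$ cases handled by the same scheme. The only substantive addition is your explicit Step 1 establishing the uniform separation $\mathrm{dist}(A,E_i)\ge \delta/2$ for large $i$, which the paper leaves implicit but which is indeed needed for the uniform bounds on the integrands.
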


\begin{proof}
     We prove only that $R_i \ra R_\infty$ in $C_{loc}(\R^n \setminus \{0\})$, since the argument for $\nabla^j R_i$ and the $D_i$ is essentially the same. Let $\epsilon >0$, and fix $A \subset \R^n\setminus E_\infty$ compact. By uniform Ahlfors regularity, choose $\rho \gg 1$ large enough so that 
     \begin{align*}
         \int_{\R^n \setminus B(0, \rho-1)} \dfrac{M}{|z-w|^{d+\alpha}} \; d\mu_i(w) < \epsilon
     \end{align*}
     holds for all $i \in \N$ sufficiently large, for $i = \infty$, and for all $z \in A$. The existence of such a $\rho$ follows from a standard argument using dyadic shells. Since $K_i \ra K_\infty$ uniformly on $\R^n \setminus \{0\}$, for $i$ sufficiently large we have
     \begin{equation*}
         \left | \int_{B(0,\rho)} \dfrac{K_i(z-w) - K_\infty(z-w)}{|z-w|^{d+\alpha}} d \mu_i(w) \right| < \epsilon, \qquad \forall z\in A. 
     \end{equation*}
It thus suffices to show that 
     \begin{align*}
         \left | \int_{B(0, \rho)} \dfrac{K_\infty(z-w)}{|z-w|^{d+\alpha}}  \; ( d\mu_i - d\mu_\infty )(w)\right| \le \epsilon, \qquad \forall z\in A,
     \end{align*}
     provided that $i$ is large enough. To obtain this, choose $\phi \in C_c^\infty(\R^n)$ with $\phi \equiv 1$ on $B(0, \rho-1)$, $0 \le \phi \le 1$, and supp $(\phi) \subset B(0,\rho)$. Then since $\mu_i \rightharpoonup \mu_\infty$, we have that 
     \begin{align*}
         \int_{B(0,\rho)} \dfrac{\phi(w) K_\infty(z-w)}{|z-w|^{d+\alpha}} \; d\mu_i(w) \ra \int_{B(0,\rho)} \dfrac{\phi(w) K_\infty(z-w)}{|z-w|^{d+\alpha}} \; d\mu_\infty(w)
     \end{align*}
     as $i \ra \infty$. The last terms that need to be estimated can be bound from above by
     \begin{align*}
         \int_{B(0,\rho) \setminus B(0, \rho-1)} \dfrac{M}{|z-w|^{d+\alpha}} \; (d \mu_i + d\mu_\infty)(w) < 2 \epsilon.
     \end{align*}
     This finishes the proof of convergence in $C_{loc}$. \end{proof}

Let us now preface our rectifiability results with some basic blow-up calculations. Assume that $E$ is a $d$-Ahlfors regular set equipped with the measure $\mu$. If $Q \in E, r_i > 0$ and $x_i \in \Omega$, let $X_i = (x_i - Q)/ r_i$. In addition, we consider the rescaled kernels $K_i(\cdot) = K(r_i \, \cdot)$ and the rescaled measures
\begin{align*}
    \mu_i(A) & \equiv \dfrac{\mu(r_i A + Q) }{r_i^d}.
\end{align*}
One easily checks that $\mu_i$ are uniformly $d$-Ahlfors regular (with constants only depending on the constants of $\mu$) with supports $E_i \equiv \dfrac{E - Q}{r_i}$, and that $K_i$ are distance-standard kernels with the same constants as $K$. Moreover, a simple change of variables yields
\begin{align*}
    D_{K_i, \mu_i}(X_i)^{-\alpha} & = \int_{E_i} \dfrac{K_i(X_i  -w)}{|X_i - w|^{d + \alpha}} \; d\mu_i(w) \\
    & = r_i^{d+ \alpha} \int_{E_i} \dfrac{K(x_i - (r_i w +Q))}{ |x_i - (r_i w +Q)|^{d+\alpha}} \; d\mu_i(w)   = r_i^{\alpha} D_{K, \mu}(x_i)^{-\alpha}.
\end{align*}
It follows then that 
\begin{align}
    |\nabla D_{K_i, \mu_i} (X_i)|  & = |\nabla D_{K, \mu}(x_i)|. \label{dkscale}
\end{align}

We recall some notation for non-tangential limits from Definition~\ref{ntlimits}.
With this language, we can now characterize rectifiability of $\mu$ in terms of non-tangential limits of $|\nabla D_{K, \mu}|$, provided that $K$ behaves like a constant near zero.

\begin{thm}
Let $n, d \in \N$ with $d < n$ and let $\alpha >0$. Let $K$ be a distance-standard, radial kernel. Then if $\mu$ is any $d$-Ahlfors regular measure with $d$-rectifiable support $E$, we have that the non-tangential limit $\ntlim_{x \ra Q}^\eta |\nabla D_K(x)|$ exists for every $\eta \in (0,1)$ at almost every $Q \in E$, if and only if $K(\lambda \,   \cdot) \ra c_\infty$ in $C^1_{loc}(\R^n \setminus \{0\})$ as $\lambda \da 0$ for some constant $c_\infty >0$ . \label{thm:rect_ntl}
\end{thm}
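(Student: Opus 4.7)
The plan is a two-direction blowup/compactness argument built on the scaling identity \eqref{dkscale}, the convergence Lemma \ref{kernellemma}, the fact (Theorem \ref{thm:dist_exact_radial}) that the only radial distance-exact kernels are constants, and classical tangent-measure theory for rectifiable Ahlfors regular measures. A useful preliminary observation is that the family $\{K_\lambda(\cdot) = K(\lambda \cdot)\}_{\lambda > 0}$ inherits the uniform distance-standard bound of $K$ (as $|\nabla^m K_\lambda(x)||x|^m = |\nabla^m K(\lambda x)||\lambda x|^m$), and hence is precompact in $C^1_{\mathrm{loc}}(\R^n \setminus \{0\})$ by Arzela--Ascoli applied with the uniform $C^2$ bound.

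For the \emph{necessity} direction, assume every $d$-rectifiable Ahlfors regular measure produces $\mu$-a.e.\ non-tangential limits at every aperture. I will apply this to the simplest rectifiable example: $\mu = \HD^d|_E$ for $E \in G(n,d)$ an arbitrary $d$-plane through the origin and $Q = 0$. Because $K$ is radial and $E$ is a plane, $|\nabla D_K|$ depends only on $\delta_E$, so writing $|\nabla D_K(x)| = h(\delta_E(x))$, the existence of a non-tangential limit at $Q=0$ forces $h(t) \to L$ as $t \downarrow 0$ for some $L \ge 0$. Now fix any sequence $\lambda_j \downarrow 0$ and pass to a subsequence along which $K_{\lambda_{j_k}} \to K'$ in $C^1_{\mathrm{loc}}(\R^n \setminus \{0\})$. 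Since $\HD^d|_E$ is invariant under the rescaling centered at $0 \in E$, Lemma \ref{kernellemma} together with \eqref{dkscale} yields $|\nabla D_{K', \HD^d|_E, \alpha}|(x) \equiv L$ on $\R^n \setminus E$. The rigidity result (Theorem 3.1 of \cite{DEMMAGIC}) invoked in the proof of Theorem \ref{thm:usfe_implies_dist_ex} then forces $D_{K', \HD^d|_E, \alpha} = c \, \delta_E$, i.e., $K'$ is $(d,\alpha)$-distance-exact for $E$. Since $E \in G(n,d)$ was arbitrary, $K'$ is $(d,\alpha)$-distance-exact in the full sense, and Theorem \ref{thm:dist_exact_radial} implies $K' \equiv c_\infty$ for some constant $c_\infty > 0$. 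An explicit computation for a constant kernel against $\HD^d|_E$ shows $L$ is a strictly decreasing function of $c_\infty$, so $c_\infty$ is uniquely determined by $L$; hence every subsequential limit of $K_\lambda$ is the \emph{same} constant $c_\infty$, and $K_\lambda \to c_\infty$ in $C^1_{\mathrm{loc}}$.

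For the \emph{sufficiency} direction, assume $K_\lambda \to c_\infty$ in $C^1_{\mathrm{loc}}$ and let $\mu$ be any $d$-Ahlfors regular, $d$-rectifiable measure with support $E$. By the standard tangent-measure theorem for rectifiable measures, at $\mu$-a.e.\ $Q \in E$ the density $\theta(Q) = \lim_{r \downarrow 0} \mu(B(Q, r))/(\omega_d r^d)$ exists in $(0, \infty)$ and the tangent measure is the unique flat measure $\theta(Q) \HD^d|_{T_Q}$ for some $T_Q \in G(n,d)$. Fix such a $Q$ and an aperture $\eta \in (0,1)$. Consider any sequence $x_i \in \Gamma_{R_i, \eta}(Q)$ with $R_i \downarrow 0$, set $r_i = |x_i - Q|$ and $X_i = (x_i - Q)/r_i$, so $|X_i| = 1$ and $\delta_{E_i}(X_i) \geq \eta$ where $E_i = (E - Q)/r_i$. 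After passing to a subsequence, $X_i \to X_\infty$ with $|X_\infty|=1$ and $\delta_{T_Q}(X_\infty) \geq \eta$, the rescaled measures $\mu_i$ converge weakly to $\theta(Q)\HD^d|_{T_Q}$ (by uniqueness of the tangent), and $K_i = K(r_i\, \cdot) \to c_\infty$ in $C^1_{\mathrm{loc}}$. The scaling identity \eqref{dkscale} combined with Lemma \ref{kernellemma} applied to $X_\infty \in \R^n \setminus T_Q$ gives
$$
|\nabla D_K(x_i)| = |\nabla D_{K_i, \mu_i, \alpha}(X_i)| \longrightarrow |\nabla D_{c_\infty, \theta(Q)\HD^d|_{T_Q}, \alpha}(X_\infty)|.
$$
The right-hand side is a constant $L(Q)$ depending only on $c_\infty, \theta(Q), \alpha$ and $d$, because $D_{c_\infty, \theta(Q)\HD^d|_{T_Q}, \alpha}$ is a scalar multiple of $\delta_{T_Q}$. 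Since $L(Q)$ is independent of the choice of sequence $x_i$ and of the subsequence used to extract $X_\infty$, the non-tangential limit exists and equals $L(Q)$.

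The principal obstacle is upgrading subsequential convergence of $K_\lambda$ to full convergence in the necessity direction; this is why both Theorem \ref{thm:dist_exact_radial} (forcing every subsequential $C^1_{\mathrm{loc}}$-limit to be a \emph{constant}) and the explicit monotonic dependence of $L$ on that constant are essential ingredients. A secondary technical point is the joint convergence of the rescaled measures $\mu_i$ and their supports to $T_Q$ in the sufficiency direction; this is handled by the combination of uniform Ahlfors regularity with uniqueness of the tangent, and is exactly the hypothesis set under which Lemma \ref{kernellemma} operates.
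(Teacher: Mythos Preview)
Your overall strategy---blowups via \eqref{dkscale}, Lemma \ref{kernellemma}, tangent measures, and the radial classification Theorem \ref{thm:dist_exact_radial}---is exactly the paper's, and your sufficiency direction is correct and matches the paper closely.

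There is, however, a quantifier slip in your necessity direction. The theorem asserts that for \emph{each} rectifiable $d$-Ahlfors regular $\mu$, the existence of non-tangential limits for that particular $\mu$ forces $K_\lambda \to c_\infty$. You instead assume the non-tangential limit property holds for \emph{every} rectifiable $\mu$ and then test on a plane. That only yields $[\forall \mu\; A(\mu)] \Rightarrow B$, whereas the theorem requires $\forall \mu\; [A(\mu) \Rightarrow B]$; equivalently, you have not ruled out that $K_\lambda$ fails to converge to a constant while some curved rectifiable $\mu$ nonetheless has non-tangential limits $\mu$-a.e.

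The repair is immediate and uses only tools you already deploy in your sufficiency argument: fix an arbitrary rectifiable $\mu$ for which non-tangential limits exist $\mu$-a.e., pick a single point $Q$ at which both the non-tangential limit exists and the tangent measure is a unique flat $c\,\HD^d|_V$, and run your blowup there. Varying $X_\infty$ over $\R^n \setminus V$ (by varying the approach direction within the cone, exactly as the paper does) forces $|\nabla D_{K_\infty, c\HD^d|_V}|$ to be constant, hence $K_\infty$ is $(d,\alpha)$-distance-exact for $V$. Since $K_\infty$ is radial, it is automatically distance-exact for every plane, so Theorem \ref{thm:dist_exact_radial} gives $K_\infty \equiv c_\infty$, and your uniqueness-of-the-constant argument finishes as written. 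This is precisely how the paper organizes the necessity direction: it never specializes $\mu$ to a plane, but works at a generic tangent-flat point of the given $\mu$.
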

\begin{proof}
Let $0 <  \eta < 1$, $r_i \da 0$, and $x_i \in \Gamma_{1, \eta}(Q)$ such that $|x_i - Q| \downarrow 0$. Passing to a subsequence (which we relabel for convenience) we may assume that $K(r_i \, \cdot) \rightarrow K_\infty$ in $C^1_{\mathrm{loc}}(\mathbb R^n\backslash \{0\})$. Since $E$ is rectifiable, at almost every $Q\in E$ there is a unique tangent measure, which is flat (this measure may, of course, depend on $Q$, cf. \cite[Theorem 16.7]{Mattila}). As such we may assume that $\mu_i \rightharpoonup c\HD^d|_V$ (where $\mu_i$ is as above with respect to $r_i \downarrow 0$) and that the plane $V$ and constant $c$ are independent of the sequence $r_i\downarrow 0$ . 

Let $X_i = r_i^{-1}(x_i - Q)$ and since $\text{dist}(x_i, E) \ge \eta|x_i - Q|$, we have that $\text{dist}(X_i, \frac{E-Q_i}{r_i}) \ge \eta |X_i|$. If $|x_i - Q| \simeq r_i$ we have that (perhaps passing to a subsequence) $X_i \rightarrow X_\infty \in \mathbb R^d \backslash V$. 

By Lemma \ref{kernellemma}, the assumption that $K(r_i \, \cdot)\ra K_\infty$, and the previous calculations on the blowup of $D_{K_i, \mu_i}$, we have that, 
\begin{equation}\label{e:ntlimitexists}
\begin{aligned}
    |\nabla {D_{K_\infty, \mu_\infty}} (X_\infty)|  = \lim_{i \ra \infty} |\nabla {D_{K_i, \mu_i}}(X_i)|   = \lim_{i \ra \infty} |\nabla D_{K, \mu}(x_i)|.
\end{aligned}
\end{equation}
That the non-tangential limit of $|\nabla D_{K, \mu}|$ exists at $Q$ means that the limit in \eqref{e:ntlimitexists} is independent of  $\Gamma_{1,\eta}(Q) \ni x_i \rightarrow Q$. Fixing $r_i$ but adjusting $\eta$ and $x_i$  we can get every point $X_\infty \in \mathbb R^n \backslash V$. So the non-tangential limit exists if and only if  $K_\infty = \lim_{r_i\downarrow 0} K(r_i \, \cdot)$ is a kernel for which $|\nabla D_{K_\infty, \mu_\infty}|$ is constant outside $V$. By Corollary 3.2 in \cite{DEMMAGIC}, since $D_{K_\infty, \mu_\infty} \simeq \delta_{V}$, the only such functions are of the form $a \delta_V$ for some $a >0$. Thus we see that the non-tangential limits exists if and only if $K_\infty$ is distance-exact, for each $r_i \downarrow 0$ with a constant of exactness independent of the sequence $r_i \downarrow 0$ (recall that $\mu_\infty$ is independent of the sequence $r_i \downarrow 0$). By Theorem \ref{thm:dist_exact_radial}, the only distance-exact radial kernels are constants $c_\infty$ and the constant of exactness changes with $c_\infty$. So we conclude that the non-tangential limit exists if and only if $K(r \, \cdot) \rightarrow c_\infty$ for $r\downarrow 0$ and we are done.
\end{proof}

Without the assumption of radial symmetry we can only conclude that $D_{K_\infty}$ is distance-exact. Moreover, we have examples showing that it is possible to obtain a continuum of different $K_\infty$:

\begin{thm}\label{t:nonradialnt}
Let $n, d \in \N$ with $d < n$ and let $\alpha >0$. Let $K$ be a distance-standard kernel. Then if $\mu$ is any $d$-Ahlfors regular measure with $d$-rectifiable support $E$, we have that the non-tangential limit $\ntlim_{x \ra Q}^\eta |\nabla D_K(x)|$ exists for every $\eta \in (0,1)$ at almost every $Q \in E$, if and only if for every plane $V\in G(n,d)$ there exists a $c_V$ such that  $D_{K(\lambda \,   \cdot)} \ra c_V\delta_V$ in $C^1_{loc}(\R^n \setminus V)$ as $\lambda \da 0$.

Furthermore, we can construct a distance-standard kernel $K$ such that non-tangential limits of $|\nabla D_K|$ exist at almost-every point of any plane $V \in G(n,d)$ but such that the limit of $K(r_i \, \cdot)$ as $r_i\downarrow 0$ depends on the sequence $r_i$.
\end{thm}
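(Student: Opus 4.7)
The plan is to combine two largely independent arguments: for the if-and-only-if characterization, we rerun the blow-up argument of Theorem \ref{thm:rect_ntl} but drop the final invocation of Theorem \ref{thm:dist_exact_radial}; for the counter-example, we exploit the small-scale oscillations of the distance-orthogonal kernel constructed in Theorem \ref{lem:dist_exact_nv}.

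For the characterization, fix a $d$-Ahlfors regular $d$-rectifiable $\mu$ and a typical $Q\in E = \mathrm{spt}\,\mu$ where the unique tangent measure is $c\mathcal H^d|_V$ for some $V \in G(n,d)$ and $c>0$ (cf.\ \cite[Theorem 16.7]{Mattila}). Given $r_i \downarrow 0$, the distance-standard bounds on $K$ and Arzel\`a--Ascoli let us pass to a subsequence along which $K(r_i\, \cdot) \to K_\infty$ in $C^1_{loc}(\R^n \setminus \{0\})$; Lemma \ref{kernellemma} then gives $D_{K(r_i\,\cdot),\mu_i} \to D_{K_\infty, c\mathcal H^d|_V}$ in $C^1_{loc}(\R^n\setminus V)$. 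Via the scaling identity \eqref{dkscale} and the freedom to vary $x_i \in \Gamma_{1,\eta}(Q)$, existence of the non-tangential limit at $Q$ for every aperture becomes equivalent to $|\nabla D_{K_\infty, c\mathcal H^d|_V}|$ being constant on $\R^n \setminus V$, which by Corollary~3.2 of \cite{DEMMAGIC} forces $D_{K_\infty, \mathcal H^d|_V} = c_V' \delta_V$ for some $c_V'>0$. Since the non-tangential limit is a single number independent of the approach, $c_V'$ is independent of the subsequence chosen, and hence $D_{K(\lambda\,\cdot), \mathcal H^d|_V} \to c_V\delta_V$ as $\lambda \downarrow 0$ (via the standard argument that every subsequence has a further subsequence with the same limit). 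The converse runs the same blowup backwards: if $D_{K(\lambda\,\cdot),\mathcal H^d|_V} \to c_V\delta_V$ for every $V$, every subsequential limit $K_\infty$ satisfies $D_{K_\infty,\mathcal H^d|_V} = c_V \delta_V$, so $|\nabla D_{K,\mu}(x_i)| \to c^{-1/\alpha}c_V$, a single value independent of aperture and subsequence, which is the desired non-tangential limit.

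For the construction in the second statement, let $K_0$ be the $(d,\alpha)$-distance-orthogonal kernel from Theorem \ref{lem:dist_exact_nv} with $p=2$ and $\limsup_{|x|\downarrow 0}|K_0(x)|>0$, and set $K = c_0 + K_0$ for some $c_0 > \|K_0\|_\infty$; then $K$ is positive and distance-standard. Distance-orthogonality of $K_0$ yields $R_{K, \mathcal H^d|_V} = c_0 R_{1,\mathcal H^d|_V}$ for every $V\in G(n,d)$, so $D_{K,\mathcal H^d|_V}$ is a constant (independent of $V$) multiple of $\delta_V$. The scaling identity \eqref{dkscale} then forces $D_{K(\lambda\,\cdot),\mathcal H^d|_V}$ to be that same constant multiple of $\delta_V$ for every $\lambda>0$, which verifies the hypothesis of the first part; in particular, non-tangential limits of $|\nabla D_K|$ exist at every point of any $V\in G(n,d)$.

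The main obstacle is to show that $K(r\,\cdot)$ has no single $C^1_{loc}$ limit as $r\downarrow 0$. To this end we revisit the construction in Theorem \ref{lem:dist_exact_nv}: $K_0$ was built from a base kernel $K_{00}$ and rescaled copies $\tilde K_\ell$ essentially supported in shells $I_\ell = (a_\ell^2/b_\ell, a_\ell)$, normalised so that $K_0(a_\ell b_\ell^{-1} x_0) \ge 1 - \epsilon$ for a fixed $x_0 \in \sphere^{n-1}$. By tightening that construction to require $a_{\ell+1} < a_\ell^2/b_\ell$ strictly, one obtains nonempty gaps between consecutive shells; on any sequence $s_\ell$ chosen in those gaps with $s_\ell\downarrow 0$, the tail estimate $g_{\tilde K_k}(s_\ell x_0) \le \epsilon 2^{-k}$ together with the decay of $K_{00}$ at zero from Lemma \ref{lem:decay} forces $K_0(s_\ell x_0) \to 0$. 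Along $r_\ell = a_\ell b_\ell^{-1}$ one instead has $K(r_\ell x_0) \ge c_0 + 1 - \epsilon$, so extracting $C^1_{loc}$-convergent subsequences via Arzel\`a--Ascoli yields two limits of $K(\lambda\,\cdot)$ that differ at $x_0$ by at least $1 - 2\epsilon$. Taking $\epsilon<1/2$ completes the counter-example. The only subtle technical point is verifying that the proof of Theorem \ref{lem:dist_exact_nv} can be adjusted to produce those gaps, which is a routine modification of the sequences $\{a_\ell\}$ and $\{b_\ell\}$.
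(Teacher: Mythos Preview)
Your proof of the characterization is essentially identical to the paper's: both rerun the blow-up from Theorem \ref{thm:rect_ntl} and stop before invoking the radial classification Theorem \ref{thm:dist_exact_radial}.

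For the construction, your route is genuinely different and in some ways cleaner. The paper builds a \emph{new} kernel $K(x) = M + \sum_i \phi_i(|x|)\tilde K\bigl(x/\sqrt{a_ia_{i+1}}\bigr)$ by gluing rescaled copies of a single distance-orthogonal $\tilde K$ (from Lemma \ref{lem:decay}) with a radial partition of unity $\{\phi_i\}$; because the cutoffs destroy distance-orthogonality of each summand, the paper must then carry out a two-case analysis on $\lambda_i/\sqrt{a_ia_{i+1}}$ to show that every subsequential limit of $K(\lambda_i\,\cdot)$ is distance-exact (either $M$ or $M+\tilde K(\alpha\,\cdot)$). You instead recycle the kernel of Theorem \ref{lem:dist_exact_nv} directly: since that kernel is already a limit of sums of rescaled distance-orthogonal pieces \emph{without} cutoffs, adding a large constant gives a $K$ that is itself $(d,\alpha)$-distance-exact, so $D_{K(\lambda\,\cdot),V}=c\,\delta_V$ identically in $\lambda$ and the hypothesis of the first part is automatic. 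The non-uniqueness of $\lim K(\lambda\,\cdot)$ then comes for free from the $\limsup_{|x|\to 0}|K_0(x)|>0$ already established, together with the gap sequence you describe. The trade-off is that the paper's construction is self-contained and identifies the limiting kernels explicitly, whereas yours leans on the internal structure of the proof of Theorem \ref{lem:dist_exact_nv} and requires the minor tweak $a_{\ell+1}<a_\ell^2/b_\ell$ to create gaps; as you note, this is harmless.
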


\begin{proof}
The first part of the Theorem follows by arguing exactly as in Theorem \ref{thm:rect_ntl}. Indeed, in the proof of Theorem \ref{thm:rect_ntl} we only invoke radial symmetry once we have concluded that $|\nabla D_{K_\infty}|$ must be a constant outside of each plane and that constant must not depend on the sequence $r_i \downarrow 0$. 

Let $\tilde{K}$ be a smooth distance-orthogonal kernel satisfying the usual estimate $$\nabla^m \tilde{K}(x) |x|^m \in L^\infty(\R^n)$$ for $m \in \N$ which is not identically equal to zero but such that $\lim_{|x| \rightarrow 0} \tilde{K} = 0 = \lim_{|x| \rightarrow \infty} \tilde{K}$ (guaranteed to exist by Lemma \ref{lem:decay}). Let $1 = a_0 > a_1 > a_2 > \ldots > a_n \rightarrow 0$ such that $a_{i+1}/a_i \downarrow 0$ monotonically. Let $\phi_i\in C_c^\infty(a_{i+1}/2, 2a_i)$ be such that $0 \leq \phi_i \leq 1$, $\sum \phi_i \equiv 1$ on $(0,1)$ and so that $\phi_i \equiv 1$ on $(2a_{i+1}, a_i/2)$. Despite these constraints, a scaling arguments shows that we can still guarantee that $\||x|^m \nabla^m \phi_i\|_\infty \leq C_m$ for some $C_m > 0$ independent of $i$. Define for $M > \| \tilde{K} \|_\infty$ $$K(x) := M + \sum_{i=1}^\infty \phi_i(|x|)\tilde{K}\left(\frac{x}{\sqrt{a_ia_{i+1}}}\right).$$

First, we note that $K$ is distance standard, by the fact that $\tilde{K}$ is smooth, $M$ is large and the estimates on the derivatives of the $\phi_i$ and $\tilde{K}$. We want to show that for every $\lambda_i\downarrow 0$ there exists a $\lambda_{i_j}$ such that $K(\lambda_{i_j} \, \cdot) \rightarrow K_\infty$ where $K_\infty$ is distance exact. Note by passing to a subsequence and relabeling, we may assume that $a_{i+1} \leq \lambda_i \leq a_i$. 

We have two cases:

\noindent {\bf Case 1:} Here we assume $$0 < \liminf_i \frac{\lambda_{i}}{\sqrt{a_{i}a_{i + 1}}} \leq \limsup \frac{\lambda_{i}}{\sqrt{a_{i}a_{i+ 1}}} < \infty.$$ In this case, passing further to a subsequence we may assume that $\lim_{i}\lambda_i/\sqrt{a_ia_{i+1}} = \alpha \in (0, \infty)$. Fix $\mathcal K \subset \subset \mathbb R^n\setminus \{0\}$. For all $i$ large enough  and all $x\in \mathcal K$ we have that $\lambda_i |x| \in (a_{j+1}, a_j)$ if any only if $i = j$. On the other hand if $i$ is large enough (depending only on $\mathcal K$), we have that $\lambda_i |x| \in (2a_{i+1}, a_i/2)$ for all $x\in \mathcal K$ (this is because $a_{i+1}/\sqrt{a_ia_{i+1}} \rightarrow 0$ and $a_{i}/\sqrt{a_i a_{i+1}} \rightarrow +\infty$). Thus 
$$K(\lambda_i \,  \cdot) -(M+ \phi_i(\lambda_ix)\tilde{K}(\lambda_i x/\sqrt{a_ia_{i+1}})) \rightarrow 0 \mbox{ in $C^1(\mathcal K)$}$$  and $$ M+ \phi_i(\lambda_ix)\tilde{K}(\lambda_i x/\sqrt{a_ia_{i+1}}) \rightarrow M+ \tilde{K}(\alpha x) \mbox{ in $C^1(\mathcal K)$.}$$ We know $M+ \tilde{K}(\alpha x)$ is distance exact so we are done in this case. 
\medskip

\noindent {\bf Case 2:} We assume that either the $\liminf = 0$ or $\limsup = +\infty$ above. The arguments for the two cases are similar so let us just do the case when $\liminf_i \frac{\lambda_{i}}{\sqrt{a_{i}a_{i + 1}}} = 0$. Relabeling we may assume that $\lim_i \frac{\lambda_{i}}{\sqrt{a_{i}a_{i + 1}}} = 0$. However, we still have $\lambda_i \in (a_{i+1}, a_i)$. Let $\mathcal K \subset \mathbb R^n \backslash \{0\}$ be a compact set and observe for any $x\in \mathcal K$ we have $$\lim_{i \rightarrow \infty} \sup_j |\tilde{K}\left(\lambda_i x/\sqrt{a_ja_{j+1}}\right)| = 0.$$ Indeed this follows from the fact that $\tilde{K}$ goes to zero at zero and infinity, that $\frac{\lambda_{i}}{\sqrt{a_{i}a_{i + 1}}} \rightarrow 0$ and that if $i \neq j$ but $i$ is large enough we have $\frac{\lambda_{i}}{\sqrt{a_{j}a_{j+ 1}}} $ either blows up or goes to zero. 

As such $K(\lambda_i \, \cdot) \rightarrow M$ in $C^1(\mathcal K)$ in this case. Of course, constant kernels are distance exact. 

Finally, we see that by letting $\lambda_i = \sqrt{a_ia_{i+1}}$ or $\lambda_i = a_i$ we get that $K(\lambda_i \, \cdot)$ converges in $C^1_{\mathrm{loc}}(\mathbb R^n \backslash \{0\})$ to $M + \tilde{K}$ or $M$ respectively. Since these two kernels are different we are done. 
\end{proof}

We end by observing that even without any symmetry assumptions on $K$ the existence of non-tangential limits of $|\nabla D_{K, \mu}|$ implies the rectifiability of $\mu$. 

\begin{thm}
Let $0 < d < n$ not necessarily an integer and let $\alpha >0$.  Let $K$ be a distance-standard kernel. Suppose that $\mu$ is a $d$-Ahlfors regular measure with support $E$ such that $\mu$-almost everywhere, $\ntlim_{x \ra Q}^\eta |\nabla D_K(x)|$ exists each $\eta \in (0,1)$. Then $d$ is an integer and $\mu$ is $d$-rectifiable.  \label{thm:ntl_rect}
\end{thm}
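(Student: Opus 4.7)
The strategy is a tangent-measure blow-up followed by an application of the USFE theory developed later in this paper (Theorem~\ref{thm:weakusfe_ur}).

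Fix a $Q\in E$ at which $\ntlim_{x\to Q}^\eta|\nabla D_K(x)|=L$ exists for every $\eta\in(0,1)$; this is a $\mu$-full subset of $E$ by hypothesis. Because the cones $\Gamma_{R,\eta}(Q)$ are nested in $\eta$, the value $L=L(Q)$ is independent of the aperture. Fix $r_i\downarrow 0$ and define the rescaled objects $\mu_i(A)=r_i^{-d}\mu(r_iA+Q)$ and $K_i(y)=K(r_iy)$ as in the blow-up computations preceding Theorem~\ref{thm:rect_ntl}. The $\mu_i$ are uniformly $d$-Ahlfors regular, and the $K_i$ are distance-standard with the same constants as $K$. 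After passing to a subsequence, $\mu_i\rightharpoonup\mu_\infty$ (a $d$-AR tangent measure of $\mu$ at $Q$ with support $E_\infty$), and by Arzel\`a--Ascoli $K_i\to K_\infty$ in $C^1_{loc}(\R^n\setminus\{0\})$ for some distance-standard $K_\infty$.

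The non-tangential hypothesis transfers to the blow-up. Given $X\in\R^n\setminus E_\infty$, choose $\eta$ with $\eta|X|\le \tfrac12\dist(X,E_\infty)$. Since $E_i\to E_\infty$ in local Hausdorff distance (a standard consequence of weak convergence of uniformly AR measures), for $i$ large we have $\dist(X,E_i)\ge\eta|X|$, so $x_i:=Q+r_iX\in\Gamma_{r_i|X|,\eta}(Q)$ with $r_i|X|\downarrow 0$; the hypothesis yields $|\nabla D_{K,\mu}(x_i)|\to L$. Combining the scaling identity~\eqref{dkscale} with Lemma~\ref{kernellemma} (which gives $|\nabla D_{K_i,\mu_i}(X)|\to|\nabla D_{K_\infty,\mu_\infty}(X)|$), we obtain $|\nabla D_{K_\infty,\mu_\infty}(X)|=L$. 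Since $X$ was arbitrary, $|\nabla D_{K_\infty,\mu_\infty}|\equiv L$ on $\R^n\setminus E_\infty$; consequently $F_{K_\infty,\mu_\infty}\equiv 0$ there, and $D_{K_\infty,\mu_\infty}$ trivially satisfies the USFE for $\mu_\infty$.

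Now Theorem~\ref{thm:weakusfe_ur}, applied to the distance-standard kernel $K_\infty$ and the $d$-AR measure $\mu_\infty$, forces $d$ to be a positive integer and $\mu_\infty$ to be $d$-uniformly rectifiable. Hence every tangent measure of $\mu$ at $\mu$-a.e.\ $Q$ is $d$-rectifiable. By the iterated-tangent theorem for AR measures, for $\mu_\infty$-a.e.\ $Q'\in E_\infty$ the tangent measures of $\mu_\infty$ at $Q'$ are again tangent measures of $\mu$ at $Q$; and since $\mu_\infty$ is rectifiable, its tangents at $\mu_\infty$-a.e.\ $Q'$ are $d$-flat. Thus at $\mu$-a.e.\ $Q$ the measure $\mu$ admits a flat tangent, and the classical Preiss--Mattila characterization of rectifiability for AR measures concludes $\mu$ is $d$-rectifiable. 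The main obstacle is precisely this last passage; a seemingly cleaner alternative would be to sharpen the blow-up analysis so as to deduce directly that $\mu_\infty$ itself is flat (not merely uniformly rectifiable), but this appears to require extra rigidity on the limiting kernel $K_\infty$, which a priori can be any distance-standard kernel, so routing through Theorem~\ref{thm:weakusfe_ur} and iterated tangents is the most robust strategy.
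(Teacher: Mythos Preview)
Your blow-up argument is essentially the same as the paper's, and you correctly arrive at $|\nabla D_{K_\infty,\mu_\infty}|\equiv L$ on $\R^n\setminus E_\infty$. The divergence is in how you use this, and your detour through Theorem~\ref{thm:weakusfe_ur} contains a genuine gap.

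Theorem~\ref{thm:weakusfe_ur} requires the kernel to be in $C^3(\R^n\setminus\{0\})$ with $\nabla^3K(x)|x|^3\in L^\infty$; this hypothesis is used in Lemma~\ref{whitneylemma} to obtain $C^2_{loc}$ convergence of the rescaled kernels and hence local uniform convergence of $F_K$. But in the present theorem $K$ is only assumed to be distance-standard, i.e.\ $C^2$ with bounds up to second order. After Arzel\`a--Ascoli you get $K_i\to K_\infty$ only in $C^1_{loc}$, and $K_\infty$ need not even be $C^2$, let alone $C^3$. So you cannot invoke Theorem~\ref{thm:weakusfe_ur} for $K_\infty$. (There is also a secondary issue: your iterated-tangent step yields only that $\mu$ has \emph{some} flat tangent at $\mu$-a.e.\ $Q$, whereas Mattila's criterion, Theorem~16.5 in \cite{Mattila}, requires \emph{all} tangent measures to be flat.)

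The paper avoids all of this by a direct rigidity argument. Once $|\nabla D_{K_\infty,\mu_\infty}|$ is a constant $L$ on $\R^n\setminus E_\infty$, note $L>0$ because $D_{K_\infty,\mu_\infty}\simeq\delta_{E_\infty}$ (the distance-standard constants of $K_\infty$ are inherited from $K$). Now Corollary~3.2 of \cite{DEMMAGIC} applies: any $C^1$ function on $\R^n\setminus E_\infty$ that is comparable to $\delta_{E_\infty}$ and has constant nonzero gradient magnitude forces $d\in\N$, $E_\infty$ to be a $d$-plane, and the function to be a multiple of $\delta_{E_\infty}$. Hence every tangent measure of $\mu$ at $Q$ is flat, and Theorem~16.5 of \cite{Mattila} gives rectifiability directly. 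The ``extra rigidity on the limiting kernel $K_\infty$'' you thought was needed is not: the rigidity lives entirely in the function $D_{K_\infty,\mu_\infty}$, not in $K_\infty$ itself.
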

\begin{proof}
  We show that $\mu$ is rectifiable by showing that almost everywhere in its support, all of its tangent measures are flat. Since $\mu$ has positive lower density and finite upper density (by Ahlfors regularity), Theorem 16.5 in \cite{Mattila} shows that these are equivalent conditions.
  
  Let $Q \in E$ be a point of $E$ so that $\ntlim_{x \ra Q}^\eta |\nabla D_{K, \mu}(x)|$ exists for each $\eta \in (0,1)$. Let $r_i \da 0$, and define $K_i, \mu_i, E_i$ as above. Up to a subsequence, we may as well assume the convergence of $\mu_i \rightharpoonup \mu_\infty$ and $E_i \rightarrow E_\infty$. Moreover, since $K$ is distance-standard, we may also assume that up to a subsequence, $K_i \ra K_\infty$ in $C^1_{loc}(\R^n \setminus \{0\})$ for some kernel $K_\infty$ that is strictly positive and satisfies $\nabla K_\infty(x) |x| \in L^\infty(\R^n)$. By Lemma \ref{kernellemma}, we may assume up to a subsequence that $D_{K_i, \mu_i}$ converges uniformly on compact subsets of $\Omega_\infty = \R^n \setminus E_\infty$ to $D_{K_\infty, \mu_\infty}$.
  
  Let $Z \in E_\infty$ and observe that if $\eta_Z := \text{dist}(Z, E_\infty) / (2|Z|) \in (0,1)$, then the points $x_i = Q + r_i Z$ satisfy $x_i \in \Omega$ for all $i$ sufficiently large with
  \begin{align*}
      \text{dist}(x_i, E) & = r_i \text{dist}(Z, E_i)  \ge (r_i/2) \text{dist}(Z, E_\infty)  = \eta_Z |Z|r_i   = \eta_Z |x_i - Q|.
  \end{align*}
  In particular then, for all $i$ sufficiently large we have $x_i \in \Gamma_{|Z| r_i, \eta_Z}(Q)$, and thus we have by assumption that 
  \begin{align*}
      \ntlim\nolimits_{x \ra Q}^{\eta_Z} |\nabla D_{K, \mu} (x_i)| & = \lim_{i \ra \infty} |\nabla D_{K_i, \mu_i}(Z)|   = |\nabla D_{K_\infty, \mu_\infty}(Z)|.
  \end{align*}
  Since $\ntlim_{x \ra Q}^{\eta_Z} |\nabla D_{K, \mu} (x_i)|$ is independent of $Z$, we have that $|\nabla D_{K_\infty, \mu_\infty}|$ is constant on $\Omega_\infty$. This constant cannot be zero, since $D_{K_i, \mu_i}$ is comparable to $\delta_{E_i}$ (with constants independent of $i$), and thus $D_{K_\infty, \mu_\infty}$ is comparable to $\delta_{E_\infty}$. By \cite[Corollary 3.2]{DEMMAGIC}, we have that $d \in \N$, $E_\infty$ is a $d$-plane, and $\mu_\infty$ is a constant multiple of $\HD^d|_{E_\infty}$. This shows that at this point $Q$, all tangent measures are flat, and thus the claim is proved.
\end{proof}

A curious corollary of the above results for any $d$-Ahlfors regular measure $\mu$, is that the existence of non-tangential limits $|\nabla D_K|$ for $\mu$-almost every $Q$ implies structure on $K$: 

\begin{cor}\label{c:structuredlimits}
Let $0 < d < n$ not necessarily an integer and let $\alpha >0$.  Let $K$ be a distance-standard kernel. Suppose that $\mu$ is a $d$-Ahlfors regular measure with support $E$ such that $\mu$-almost everywhere, $\ntlim_{x \ra Q}^\eta |\nabla D_K(x)|$ exists each $\eta \in (0,1)$. Then for every plane $V\in G(d,n)$ there exists a $c_V$ such that  $D_{K(\lambda  \, \cdot)} \ra c_V\delta_V$ in $C^1_{loc}(\R^n \setminus V)$ as $\lambda \da 0$.
\end{cor}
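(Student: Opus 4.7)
Theorem~\ref{thm:ntl_rect} already does much of the work: it tells us that $d\in\N$ and that $\mu$ is $d$-rectifiable. Hence at $\mu$-a.e.\ $Q\in E$ the tangent measure of $\mu$ exists, is unique, and is a flat measure $\mu_\infty^Q = c_Q\,\HD^d|_{V_Q}$ for some plane $V_Q\in G(n,d)$ and constant $c_Q>0$ (cf.\ \cite[Thm.~16.7]{Mattila}). My plan is to re-run the blow-up analysis from the proof of Theorem~\ref{thm:ntl_rect}, now using the assumed \emph{existence} of the non-tangential limit at $Q$ to pin down the subsequential limits of the rescaled kernels uniquely rather than only up to a subsequence.

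Fix a sequence $\lambda_i\downarrow 0$. By the distance-standard bounds on $K$ and Arzela--Ascoli, one passes (along a subsequence) to a distance-standard $K_\infty$ with $K(\lambda_i\,\cdot)\to K_\infty$ in $C^1_{\mathrm{loc}}(\R^n\setminus\{0\})$. Choose a $\mu$-typical $Q$ at which the non-tangential limit exists, and set $r_i=\lambda_i$ in the preamble to \eqref{dkscale}. Lemma~\ref{kernellemma} and the scaling identity \eqref{dkscale} then imply, exactly as in the proof of Theorem~\ref{thm:ntl_rect}, that $|\nabla D_{K_\infty,\mu_\infty^Q}|$ equals a single constant $\ell_Q$ (the value of the non-tangential limit at $Q$) on all of $\R^n\setminus V_Q$. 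Combining this with the comparability $D_{K_\infty,\mu_\infty^Q}\simeq \delta_{V_Q}$ and \cite[Cor.~3.2]{DEMMAGIC} forces $D_{K_\infty,\mu_\infty^Q} = \ell_Q\,\delta_{V_Q}$; absorbing the factor of $c_Q$ yields $D_{K_\infty,\HD^d|_{V_Q},\alpha} = c_{V_Q}\,\delta_{V_Q}$ with $c_{V_Q} := \ell_Q\,c_Q^{1/\alpha}$ depending only on $Q$ (in particular, not on the extracted subsequence). Every subsequence of $\{\lambda_i\}$ therefore has a further subsequence along which $D_{K(\lambda_i\,\cdot),\HD^d|_{V_Q},\alpha}\to c_{V_Q}\,\delta_{V_Q}$ in $C^1_{\mathrm{loc}}(\R^n\setminus V_Q)$ (by Lemma~\ref{kernellemma}), which yields the claimed convergence as $\lambda\downarrow 0$ for every $V=V_Q$ realized as a tangent plane of $\mu$.

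\textbf{Main obstacle.} The only genuinely delicate point is upgrading the convergence from ``every tangent plane $V_Q$'' to ``every $V\in G(n,d)$'', since a given rectifiable $\mu$ need not have all $d$-planes as tangents (consider $\mu = \HD^d|_V$ for a single fixed $V$). The resolution is to observe that ``$D_{K(\lambda\,\cdot),\HD^d|_V,\alpha}\to c_V\delta_V$'' is a property of $K$, $V$, and $\alpha$ alone; one then combines the preceding blow-up analysis (which shows that every subsequential limit $K_\infty$ of $K(\lambda\,\cdot)$ must be distance-exact for at least one plane with a \emph{sequence-independent} constant) with the equivalence established in Theorem~\ref{t:nonradialnt} --- applied with the auxiliary flat measures $\HD^d|_V$ --- to propagate the conclusion to arbitrary $V\in G(n,d)$. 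Making this last bookkeeping step precise is the only technical wrinkle; everything else is essentially a verbatim repetition of the argument in Theorem~\ref{thm:ntl_rect}.
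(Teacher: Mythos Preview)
Your overall approach mirrors the paper's: the corollary is meant to be an immediate combination of Theorem~\ref{thm:ntl_rect} (to get rectifiability) with the ``only if'' direction of Theorem~\ref{t:nonradialnt}. Your first two paragraphs reproduce exactly this argument and are correct as far as they go --- they establish the conclusion for every plane $V_Q$ arising as a tangent plane of the given $\mu$.

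The gap is in your resolution of the ``main obstacle.'' You propose to apply Theorem~\ref{t:nonradialnt} to the auxiliary flat measures $\HD^d|_V$ in order to propagate from the tangent planes $V_Q$ of $\mu$ to an arbitrary $V\in G(n,d)$. This is circular. The ``only if'' direction of Theorem~\ref{t:nonradialnt}, applied to a fixed measure $\nu$, is proven precisely by the blow-up you just ran, and therefore only yields the conclusion for the tangent planes of $\nu$. When $\nu=\HD^d|_V$ the only tangent plane is $V$ itself, so you recover nothing beyond what you put in. Conversely, the ``if'' direction of Theorem~\ref{t:nonradialnt} takes as hypothesis exactly what you are trying to prove (convergence for \emph{every} plane), so it cannot be used either. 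Concretely: a $0$-homogeneous kernel $K$ that is distance-exact for one plane $V_0$ but not for another $V_1$ (such kernels are not excluded by the results of Section~\ref{ss:distanceexactsymmetries} in higher codimension) shows that the passage from ``one tangent plane'' to ``all planes'' cannot be purely formal --- one genuinely needs the hypothesis to hold for measures whose tangent planes exhaust $G(n,d)$, or else the conclusion must be restricted to the tangent planes of the given $\mu$.

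In short, up to the obstacle your argument is exactly the paper's, and you have correctly isolated the delicate step; but your proposed bookkeeping to close it does not work. The paper itself gives no separate proof of the corollary and treats it as immediate from Theorems~\ref{thm:ntl_rect} and~\ref{t:nonradialnt}, so the issue you raise is not one on which the paper offers further guidance.
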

\subsection{The USFE for non-exact kernels}

Our aim in this section is to develop necessary and sufficient conditions on a distance-standard kernel $K$ so that $D_K$ satisfies the USFE outside of all $d$-uniformly rectifiable measures. The key idea is to measure how close $D_K$ is to behaving like the Euclidean distance outside of affine spaces, in a way which is uniform over scales and affine spaces. This is the purpose of the $\gamma$ function defined below in Definition \ref{defn:gamma}.

Before we define the $\gamma$ function, let us remember that for any distance-standard kernel $K$, outside of any affine space $E \in G(n,d)$ the functions $R_K$ (and thus $D_K, F_K$) are invariant in directions parallel to $E$. In particular, assume that $0 \in V \in G(n,d)$ and that $Q(0,r)$ is a cube of side length $r$ centered at $0$. Then $$\int_{Q(0,r)} F^2_{K, E}(x)\, \delta_E(x)^{-n+d}\, dx = r^d \int_{Q(0,r)\cap V^\perp} F^2_{K, E}(y)\, \delta_E(y)^{-n+d}\, d\mathcal H^{n-d}(y),$$ where we abused notation and identified a function $f:\mathbb R^n\rightarrow \mathbb R$ with its restriction  $f:\mathbb R^{n-d}\cong V^\perp\rightarrow \mathbb R$ by $f(y) := f(0+y)$. We will continue this abuse of notation throughout the section, hoping that it does not cause too much confusion.

\begin{defn}\label{defn:gamma}
Whenever $K$ is distance-standard, $\lambda > 1$ and $\alpha >0$, we write for $r >0$, $W_\lambda(r) = B(0, r) \setminus B(0, r/\lambda)$ and 
\begin{align}
\gamma_{K, \lambda, \alpha}(r)^2   \equiv \sup_{V\in G(d, n)}\inf_{c \in \R} \sum_{m=0}^2 \,\int \limits_{V^\perp \cap W_\lambda(r) } |\delta_{V}(z)^{\alpha + m} \nabla^m ( R_{{K}, V, \alpha} (z) - &  c \delta_{V}(z)^{-\alpha})|^2 \nonumber \\
& \times \delta_{V}(z)^{-n+d} \; d\HD^{n-d}(z). \label{eqn:gamma}
\end{align}
We say that $K$ is uniformly good for distances (with exponent $\alpha >0)$ if 
\begin{align}
\int_0^\infty \gamma_{K, \lambda, \alpha}(r)^2 \; \dfrac{dr}{r} < \infty \label{eqn:good_for_dist}
\end{align}
for some $\lambda >1$. 
\end{defn}

\begin{rmk}\label{rmk:good_for_dist}
It is straight-forward to verify that if $1 < \lambda_1 < \lambda_2$, then the estimate 
\begin{align*}
\int_0^\infty \gamma_{K, \lambda_1, \alpha}(r)^2 \; \dfrac{dr}{r} & \le   \int_0^\infty \gamma_{K, \lambda_2, \alpha}(r)^2 \; \dfrac{dr}{r} \le C_{\lambda_1, \lambda_2, \alpha} \int_0^\infty \gamma_{K, \lambda_1, \alpha}(r)^2 \; \dfrac{dr}{r}
\end{align*}
holds. Hence $K$ is uniformly good for distances if and only if 
\begin{align*}
\int_0^\infty \gamma_{K, 2, \alpha}(r)^2 \; \dfrac{dr}{r} < \infty. 
\end{align*}
\end{rmk}

The key estimate is to bound the integral of $F$ on $W_\lambda(r)$ by the $\gamma$ functional. 

\begin{lemma} \label{lemma:usfeflat}
      Suppose that $K$ is a distance-standard kernel.  There is a constant $C$ depending only on $n, d, \alpha$ and the distance-standard constant for $K$ so that the following estimate holds for any $\lambda >1$ and any $V\in G(n,d)$, 
\begin{align}
\int_{V^\perp \cap W_{\lambda}(r)} F_{K, V, \alpha}(z)^2  \delta_{V}(z)^{-n+d} \; d\HD^{n-d}(z) \le C \gamma_{K, \lambda, \alpha}(r)^2.  \label{eqn:mod7}
\end{align}
\end{lemma}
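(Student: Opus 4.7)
The plan is to write $F_K = \delta_V\,|\nabla(|\nabla D_K|^2)|$ pointwise as the evaluation of a smooth function of $(R_K, \nabla R_K, \nabla^2 R_K)$ that vanishes whenever $R_K$ is replaced by the ``exact'' form $c\,\delta_V^{-\alpha}$, and then to estimate the resulting discrepancy by a first-order Taylor bound in $(h, \nabla h, \nabla^2 h)$ where $h := R_K - c\,\delta_V^{-\alpha}$. Fix $V \in G(n,d)$ with $0 \in V$ (so that $\delta_V(z) = |z|$ on $V^\perp$), let $c = c(V)$ be the constant achieving the infimum in the definition of $\gamma_{K,\lambda,\alpha}(r)^2$, and set $\rho := c\,\delta_V^{-\alpha}$. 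A direct computation gives $|\nabla D_K|^2 = \Phi(R_K, \nabla R_K)$ with $\Phi(r,p) := |p|^2/(\alpha^2 r^{2/\alpha+2})$, hence by the chain rule
$$\nabla\bigl(|\nabla D_K|^2\bigr) = \Psi(R_K, \nabla R_K, \nabla^2 R_K), \qquad \Psi(r, p, H) := \Phi_r(r,p)\,p + H\,\Phi_p(r,p).$$

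The crucial observation is that $\Psi(\rho, \nabla\rho, \nabla^2\rho) \equiv 0$ on $V^\perp\setminus\{0\}$. Indeed, $\rho$ coincides with $R_{K_0, V, \alpha}$ for an appropriately chosen constant kernel $K_0$, and for any constant kernel $D_{K_0, V, \alpha}$ is a positive multiple of $\delta_V$, so $|\nabla D_{K_0}|^2$ is constant on $V^\perp\setminus\{0\}$ and its gradient vanishes identically; since $\Psi$ depends only on the pointwise values of $(R, \nabla R, \nabla^2 R)$, the vanishing follows. (Equivalently, this reduces to $|\nabla \delta_V|^2 \equiv 1$, hence $\nabla\delta_V \cdot \nabla^2 \delta_V \equiv 0$.)

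With this cancellation in hand, the mean value theorem applied to $t \mapsto \Psi\bigl((1-t)\rho + tR_K,\ (1-t)\nabla\rho + t\nabla R_K,\ (1-t)\nabla^2\rho + t\nabla^2 R_K\bigr)$ on $[0,1]$ yields
$$\bigl|\nabla(|\nabla D_K|^2)\bigr| \;\le\; \sup_{t \in [0,1]} \bigl(|\partial_r \Psi|\,|h| + |\partial_p \Psi|\,|\nabla h| + |\partial_H \Psi|\,|\nabla^2 h|\bigr).$$
Since $K$ is distance-standard one has $R_K \sim \delta_V^{-\alpha}$, $|\nabla R_K| \lesssim \delta_V^{-\alpha-1}$, $|\nabla^2 R_K| \lesssim \delta_V^{-\alpha-2}$, and the same two-sided or one-sided bounds hold for $\rho$ and for each convex combination appearing in the MVT (using positivity of $\rho$ and $R_K$). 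Plugging these into the explicit formulas for $\partial_r \Psi,\partial_p \Psi, \partial_H \Psi$ and counting powers of $\delta_V$ gives $|\partial_r \Psi| \lesssim \delta_V^{\alpha-1}$, $|\partial_p \Psi| \lesssim \delta_V^\alpha$, $|\partial_H \Psi| \lesssim \delta_V^{\alpha+1}$. Multiplying through by $\delta_V$ produces the pointwise estimate
$$F_K(z) \;\lesssim\; \sum_{m=0}^2 \delta_V(z)^{\alpha+m}\,|\nabla^m h(z)|$$
on $V^\perp\cap W_\lambda(r)$; squaring via the elementary inequality $(a+b+c)^2 \le 3(a^2+b^2+c^2)$, integrating against $\delta_V^{-n+d}\,d\HD^{n-d}$, and recalling that $c$ was chosen to minimize the integrand defining $\gamma$, produces (a constant times) $\gamma_{K,\lambda,\alpha}(r)^2$ on the right, which is \eqref{eqn:mod7}. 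The one step genuinely requiring care is the cancellation $\Psi(\rho, \nabla\rho, \nabla^2\rho) = 0$; once this is in place, everything else is scaling bookkeeping under the distance-standard hypothesis.
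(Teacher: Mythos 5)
Your proof is correct and follows essentially the same route as the paper's: both rest on the cancellation that $\nabla\bigl(|\nabla D|^2\bigr)$ vanishes identically when $R_K$ is replaced by $c\,\delta_V^{-\alpha}$, combined with the crude bounds $|\nabla^m R_K|\lesssim \delta_V^{-\alpha-m}$, to reach the pointwise estimate $F_K\lesssim\sum_{m=0}^2\delta_V^{\alpha+m}|\nabla^m(R_K-c\,\delta_V^{-\alpha})|$, the only difference being that you package the difference estimate as a mean value theorem along the interpolating segment where the paper telescopes $B_K^j-B_{c_0}^j$ term by term. One small point to make explicit: your interpolation needs the near-minimizing constant $c$ to be positive and comparable to $1$ (otherwise $(1-t)c\,\delta_V^{-\alpha}+tR_K$ may vanish, where $\Phi$ is singular), but this is harmless because if $c$ lies outside a fixed interval $[C_1^{-1},C_1]$ determined by the distance-standard constant, then the $m=0$ term of $\gamma_{K,\lambda,\alpha}(r)^2$ already dominates $\int_{V^\perp\cap W_\lambda(r)}\delta_V^{-n+d}\,d\HD^{n-d}$, which in turn dominates the left-hand side since $F_K$ is bounded.
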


\begin{proof}
Since \eqref{eqn:gamma} is a supremum over the Grassmanian and since the rotation of a distance standard kernel is distance standard (with the same constants), \eqref{eqn:mod7} is rotation invariant. So we can assume that $V = E \equiv \mathbb R^d \subset \mathbb R^n$. 

Denote by $R_{c}(x), D_{c}(x), F_{c}(x)$ the corresponding functions with constant kernels that give $R_{c}(x) = c \delta_E(x)^{-\alpha}$ where $c \in \R$. Recall that since $E$ is a $d$-plane, we have that $F_{c}(x) \equiv 0$. We first calculate for $j > d$, 
 \begin{align*}
    \partial_j D_{K}(x) & =  \left(\dfrac{-1}{\alpha} \right) R_{K}(x)^{-1/\alpha- 1} \partial_j R_{K}(x),\\
    \left| \nabla D_{K}(x) \right|^2 & = \left( \dfrac{1}{\alpha^2} \right) R_{K}(x)^{-2/\alpha - 2} \left ( \sum_{j > d} |\partial_j R_{K}(x)|^2 \right),\\
    \partial_j \left | \nabla D_{K}(x) \right|^2 & = \left( \dfrac{1}{\alpha^2} \right) ( -2 / \alpha - 2) R_{K}(x)^{-2/\alpha - 3} \partial_j R_{K}(x) \left ( \sum_{i > d} |\partial_i R_{K}(x)|^2 \right)  \\
    &\quad + \dfrac{2}{\alpha^2} R_{K}(x)^{-2/\alpha - 2} \left( \sum_{i > d} \partial_i R_{K}(x) \partial_j \partial_i R_{K}(x) \right).
\end{align*}
Recall that $K$ is distance standard so for $m=0,1,2$,
\begin{align}
 \|\nabla^m K(x) |x| \|_\infty < \infty \label{eqn:mod6}
\end{align}
and straightforward estimates give
\begin{align}
    \delta_{E}(x)^{- \alpha} \lesssim& \,R_{K}(x) \lesssim \delta_{E}(x)^{- \alpha},\label{crude0}\\
        \left| \partial_j R_{K}(x) \right| \lesssim& \,\delta_{E}(x)^{-1  - \alpha}, \label{crude1} \\
    \left| \partial_j \partial_i R_{K}(x) \right| \lesssim& \,\delta_{E}(x)^{-2 - \alpha}. \label{crude2}
\end{align}

Putting \eqref{crude0} together with the computation of $|\partial_j |\nabla D_{K}|^2|$ we get
\begin{align}
     \delta_{E}(x)^{2 + 3 \alpha} B_{K}^j(x) \lesssim  \left| \partial_j \left | \nabla D_{K}(x) \right|^2 \right| & \lesssim  \delta_{E}(x)^{2 + 3 \alpha} B_{K}^j(x) \label{simplification}
\end{align}
where
\begin{align*}
    B_{K}^j(x) & = \left | (-1/\alpha - 1) \partial_j R_{K}(x) \left ( \sum_{i > d} |\partial_i R_{K}(x)|^2 \right)  +  R_{K}(x) \left( \sum_{i > d} \partial_i R_{K}(x) \partial_j \partial_i R_{K}(x) \right) \right |
\end{align*}
Remark that $B_{c}^j(x) \equiv 0$ necessarily, since otherwise $F_{c}(x) \ne 0$. We are now in the position to establish \eqref{eqn:mod7}.

We choose a constant $c_0 \in \R$ so that 
\begin{align}
\sum_{m=0}^2 \int_{E^\perp \cap W_\lambda(r)}   |\delta_{E}(z)^{\alpha + m} \nabla^m ( R_{K} (z) - c_0 \delta_{E}(z)^{-\alpha})|^2 \delta_{E}(z)^{-n+d} \; d\HD^{n-d}(z)  & \le 2 \gamma_{K, \lambda}(r)^2. \label{eqn:mod8}
\end{align}
 Recalling that $B^j_{c_0} \equiv 0$, we have 
\begin{align*}
    |B_{K}^j(x)| & = |B_{K}^j(x) - B_{c_0}^j(x)|,
\end{align*}
and each of the terms in the latter absolute value can bounded from above by 
\begin{align*}
   C \sum_{m = 0}^2 |\nabla^m (R_{K}(x) - c_0 \delta_{E}(x)^{-\alpha})| \delta_{E}(x)^{-2\alpha -3 + m},
\end{align*}
where $C$ depends on $n, d, \alpha$ and the distance-standard constant of $K$. Let us show part of this computation, as each term can be handled essentially the same way. To estimate the first terms appearing in $B_{K}^j - B_{c_0}^j$, we use \eqref{crude1}  to obtain
\begin{multline*}
    |\partial_j R_{K}(x) \partial_i R_{K}(x)^2 - \partial_j R_{c_0}(x) \partial_i R_{c_0}(x)^2 |  \le | \partial_i R_{K}(x)|^2 |\partial_j R_{K}(x) - \partial_j R_{c_0}(x)| \\
     + |\partial_j R_{c_0}(x) \partial_i R_{K}(x)| |\partial_i R_{K}(x) - \partial_i R_{c_0}(x) |  + |\partial_j R_{c_0}(x)^2 | |\partial_i R_{K}(x) - \partial_i R_{c_0}(x)| \\
     \lesssim \delta_{E}(x)^{-2-2\alpha} (|\partial_j R_{K}(x) - \partial_j R_{c_0}(x)|   
     + |\partial_i R_{K}(x) - \partial_i R_{c_0}(x) | + |\partial_i R_{K}(x) - \partial_i R_{c_0}(x)| ) \\
     \lesssim |\nabla(R_{K}(x) - c_0 \delta_{E}(x)^{-\alpha})| \delta_{E}(x)^{-2\alpha - 2}.
\end{multline*}
 Combining this with \eqref{simplification} we obtain the pointwise estimate
\begin{align*}
F_{K}(x) \le C  \sum_{m=0}^2 |\nabla^m (R_{K}(x) - c_0 \delta_{E}(x)^{-\alpha})| \delta_{E}(x)^{\alpha+m}.
\end{align*}
From here we readily see that 
\begin{align*}
    & \int_{E^\perp \cap W_\lambda(r)} F_{K}(z)^2 \delta_{E}(z)^{-n + d } \; d\HD^{n-d}(z)    \\
     & \quad\le  C \sum_{m=0}^2 \int_{E^\perp \cap W_\lambda(r)} |\delta_{E}(z)^{\alpha +m} \nabla^m(R_{K}(z) - c_0\delta_{E}(z)^{-\alpha})|^2 \delta_{E}(z)^{-n + d} \;  d\HD^{n-d}(z) \\
& \quad\le C \gamma_{K, \lambda, \alpha}(r)^2,
\end{align*}
by the choice of $c_0$, completing the proof of \eqref{eqn:mod7}.
\end{proof}

One can pass from estimates out of flat sets to estimates outside of uniformly rectifiable sets following \cite[Theorem 2.1]{DEMMAGIC}. However, there will be extra, complicating terms in the analysis, caused by the fact that $F_K$ may not be identically equal to zero outside of flat sets. We estimate those extra terms using the $\gamma$ functional and Lemma \ref{lemma:usfeflat}. 

\begin{thm}
Let $n, d \in \N$ with $d < n$, and let $\beta >0$. Suppose that $K$ is a distance-standard kernel that is uniformly good for distances, i.e., (\ref{eqn:good_for_dist}) holds for some $\lambda >0$ with exponent $\beta$. If in addition
\begin{align*}
    \nabla^3K(x) |x|^3 \in L^\infty(\R^n),
\end{align*}
then $D_{K, \mu, \beta}$ satisfies the USFE for each $d$-uniformly rectifiable measure $\mu$. \label{T1}
\end{thm}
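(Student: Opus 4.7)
My plan is to adapt the strategy of \cite[Thm.~2.1]{DEMMAGIC}, in which the USFE outside a uniformly rectifiable set is reduced to a USFE outside planes. As stressed in the paragraph preceding Theorem~\ref{T1}, the principal new difficulty compared to \cite{DEMMAGIC} is that $F_{K,\HD^d|_V,\beta}$ is generally nonzero on $V^\perp$, so the ``flat'' USFE itself must be proved rather than trivially discarded. Lemma~\ref{lemma:usfeflat} together with the uniformly-good-for-distances hypothesis (\ref{eqn:good_for_dist}) supplies exactly the needed input.

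\emph{Step 1: USFE on $d$-planes.} Fix $V\in G(n,d)$, $Q\in V$, $R>0$. Since $F_{K,V,\beta}$ is invariant in directions parallel to $V$, Fubini gives
\[
\int_{B(Q,R)\cap (\R^n\setminus V)} F_{K,V,\beta}(x)^2\,\delta_V(x)^{-n+d}\,dx \;\le\; C_n R^d \int_{V^\perp \cap B(0, C_n R)} F_{K,V,\beta}(z)^2\,\delta_V(z)^{-n+d}\,d\HD^{n-d}(z).
\]
Decomposing $V^\perp \cap B(0, C_n R)$ as a disjoint union of annular shells $W_\lambda(C_n R\lambda^{-k})$, $k\ge 0$, applying Lemma~\ref{lemma:usfeflat} to each, and using (\ref{eqn:good_for_dist}), I bound the right-hand side by $C R^d \int_0^\infty \gamma_{K,\lambda,\beta}(r)^2\,dr/r < \infty$, uniformly in $V, Q, R$. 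The same Fubini-plus-Lemma~\ref{lemma:usfeflat} argument, applied at a single scale, also yields the scale-localized estimate $\int_{T_j} F_{K,V_j,\beta}^2\, \delta_{V_j}^{-n+d}\, dx \lesssim \ell(T_j)^d\, \gamma_{K,\lambda,\beta}(C\ell(T_j))^2$ for a Whitney cube $T_j$ at distance $\simeq \ell(T_j)$ from a plane $V_j$, which is what I will use below.

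\emph{Step 2: Transfer to uniformly rectifiable $\mu$.} Let $\mu$ be $d$-uniformly rectifiable with support $E$, and fix $Q_0\in E$, $R>0$. Take a Whitney decomposition $\{T_j\}$ of $B(Q_0,R)\cap \Omega$ with $\ell(T_j)\simeq \delta_E(T_j)$, let $x_j$ be a center of $T_j$, and for each $T_j$ choose a best-approximating $d$-plane $V_j$ so that
\[
\beta_j^2 \;:=\; \beta_{2,\mu}(x_j, \Lambda \ell(T_j))^2 \;=\; \inf_V\,\ell(T_j)^{-d-2} \int_{B(x_j, \Lambda \ell(T_j))} \mathrm{dist}(y,V)^2\,d\mu(y).
\]
The key pointwise step is the comparison
\[
F_{K,\mu,\beta}(x)^2 \;\lesssim\; F_{K,\HD^d|_{V_j},\beta}(x)^2 + \beta_j^2,\qquad x\in T_j,
\]
obtained by writing $R_K$, $\nabla R_K$, $\nabla^2 R_K$ as integrals of $K(x-y)|x-y|^{-d-\beta}$ and its $x$-derivatives against $\mu$ vs.\ $\HD^d|_{V_j}$, and dominating the difference on each dyadic annulus around $x$ by $\beta_j$ via one integration-by-parts in $y$. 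Summing the pointwise bound over $j$, combining the Step~1 scale-localized estimate with the Ahlfors-regularity count $\#\{j : \ell(T_j)\sim 2^{-k}\} \lesssim R^d \, 2^{kd}$ for $2^{-k}\le R$, and invoking the Carleson-measure characterization of uniform rectifiability (David--Semmes), which gives $\sum_j \beta_j^2\,\ell(T_j)^d \lesssim R^d$, yields $\int_{B(Q_0,R)\cap\Omega} F_{K,\mu,\beta}^2\,\delta_E^{-n+d}\,dx \lesssim R^d$, uniformly in $Q_0$ and $R$.

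The principal obstacle is the pointwise comparison of Step~2, which ultimately requires controlling $\nabla^2 \bigl(R_{K,\mu}-R_{K,\HD^d|_{V_j}}\bigr)$ by $\beta_j$. The integration-by-parts in $y$ that converts $L^2$-closeness of $\mu$ and $\HD^d|_{V_j}$ into closeness of the integrals shifts one additional derivative onto $K$ at the highest-order term; this is precisely why the hypothesis $\nabla^3 K(x)|x|^3 \in L^\infty(\R^n)$ is imposed, and a careful dyadic-annulus summation (using Ahlfors regularity and the distance-standard bounds on the lower-order pieces) is needed to make the sum absolutely convergent and uniform in $x\in T_j$.
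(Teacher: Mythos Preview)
Your overall architecture---split $F_{K,\mu}$ into a flat piece plus an error, control the flat piece via Lemma~\ref{lemma:usfeflat} and the uniformly-good-for-distances hypothesis, and control the error by a Carleson condition characterizing uniform rectifiability---is exactly right, and Step~1 is fine.  The gap is in Step~2.

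The quantity $\beta_{2,\mu}(x_j,\Lambda\ell(T_j))$ measures how close the \emph{support} of $\mu$ is to a plane, not how close $\mu$ is to a \emph{flat measure} $c\,\HD^d|_{V_j}$.  If $\mu=g\,\HD^d|_{V_j}$ with non-constant bounded density $g$, then $\beta_j=0$ identically, yet $R_{K,\mu}-R_{K,\HD^d|_{V_j}}$ (and its derivatives, and hence $F_{K,\mu}-F_{K,\HD^d|_{V_j}}$) need not be small.  So the pointwise inequality $F_{K,\mu}^2\lesssim F_{K,\HD^d|_{V_j}}^2+\beta_j^2$ cannot hold as stated.  Relatedly, ``one integration by parts in $y$'' is not available: $\mu$ is merely Ahlfors regular on a uniformly rectifiable set, with no a~priori smoothness of density, so there is no formula that trades an $L^2$ closeness of support for closeness of the integrals.

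The paper's proof repairs this by replacing $\beta_{2}$ with Tolsa's $\alpha$-numbers (local Wasserstein distance from $\mu$ to the cone of flat measures $c\,\HD^d|_V$), which see both the geometry and the density.  One then gets, for a carefully chosen flat $\nu=\nu(x)$ and a telescoping over scales $\nu_k$,
\[
\left|\nabla^m R_{K,\mu}(z)-\nabla^m R_{K,\nu}(z)\right|\;\lesssim\;\delta_E(z)^{-\beta-m}\sum_{l\ge0}2^{-(\beta+m)l}\alpha(y,2^{l+8}r_0),\qquad m=0,1,2,
\]
using only that $\phi_k(y)\,\nabla^m_x\bigl(K(x-y)|x-y|^{-d-\beta}\bigr)$ is Lipschitz in $y$ with the correct constant; the $m=2$ case is what forces $\nabla^3K(x)|x|^3\in L^\infty$.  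The $\alpha$-number Carleson estimate for uniformly rectifiable $\mu$ then closes the argument exactly as you intend.  Your reasoning for why $\nabla^3K$ enters is morally correct, but the mechanism is ``Lipschitz test function against a Wasserstein distance,'' not integration by parts.
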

\begin{proof}
Fix $\beta >0$ (which we shall omit in notation of $F_K$ and $D_K$), and fix $\mu$ some $d$-uniformly-rectifiable measure with support $E$. Let $\mathcal{F} = \mathcal{F}_d$ be the set of flat measures, and let $\mathcal{D}_{x,r}(\mu, \nu)$ denote the Wasserstein distance
\begin{align*}
\mathcal{D}_{x, r}(\mu, \nu) = r^{-d-1} \sup_{f \in \Lambda(x,r)} \left|  \int_{B(x,r)} f ( d\mu - d\nu) \right|
\end{align*}
where $\Lambda(x,r)$ is the set of functions $f$ that are $1$-Lipschitz on $\R^n$ and vanish on $\R^n \setminus B(x,r)$. With this distance, assign the definition
\begin{align*}
    \alpha(x,r) = \inf_{\nu \in \mathcal{F}} \mathcal{D}_{x,r}(\mu, \nu)
\end{align*}
where $x \in \R^n$, and $r>0$ is such that $B(x,r) \cap E \ne \emptyset$. These so called ``$\alpha$-numbers'' are useful in the context of quantitative rectifiability, since any uniformly rectifiable set $E$ satisfies Carleson measure estimates on $\alpha(x,r)^2$. In particular, since $\mu$ is $d$-uniformly rectifiable we have that 
\begin{align}
\int_{B(Q,R) \cap E} \int_0^R \alpha(y,r)^2 \dfrac{d \mu(y) \; dr}{r} \le C R^d, \label{alpha_cm}
\end{align}
as in Lemma 5.9 in \cite{DFM19} (see also \cite{TOLSA09} where the $\alpha$ numbers are introduced).

The key estimate we wish to show is the following: for $x \in \Omega = E \setminus \R^n$, $r_0 = \delta_E(x)$, and $k \ge 0$, we let $r_k = 2^k r_0$. For each $1 \le i \le n$, we show that 
\begin{align}
\left| \partial_i  \left( \left|  \nabla D_{K, \mu}(x) \right|^2 \right) \right| & \le \left| \partial_i  \left( \left|  \nabla D_{K, \mu}(x) \right|^2 \right)  -  \partial_i  \left( \left|  \nabla D_{K, \nu}(x) \right|^2 \right)  \right| + \left|\partial_i  \left( \left|  \nabla D_{K, \nu}(x) \right|^2 \right)  \right| \nonumber \\
& \le C \delta_E(x)^{-1} \sum_{l \ge 0} 2^{-(\beta + 1)l} \alpha(y, 2^8 r_0 ) +  \left|\partial_i  \left( \left|  \nabla D_{K, \nu}(x) \right|^2 \right)  \right| \label{thm:usfe_gen_e_ineq}
\end{align}
for $y \in E \cap B(x, 16\delta_E(x))$, and where $\nu = \nu(x)$ is a well-chosen flat measure. Assuming that \eqref{thm:usfe_gen_e_ineq} holds, let us prove the result.

Let $a(y,r)$ denote the function defining the sum on the right-hand side of \eqref{thm:usfe_gen_e_ineq}. That is, $a(y, r)  = \sum_{\ell \ge 0}2^{-(\beta + 1) \ell} \alpha(y, 2^l r)$. From \eqref{thm:usfe_gen_e_ineq}, we have that for $Q \in E$ and $R >0$,
\begin{align}
\int_{B(Q,R)} F_K^2(X) \delta_E(X)^{-n+d} \; dX & \le C \int_{B(Q,R)} \fint_{B(X, 16\delta_E(X)) \cap E} a(y, 2^8 \delta_E(X))^2 \; d\mu(y) \delta_E(X)^{-n+d} \; dX \nonumber \\
& + \sum_{i=1}^n \int_{B(Q,R)} \left|\partial_i  \left( \left|  \nabla D_{K, \nu(X)}(X) \right|^2 \right)  \right|^2  \delta_E(X)^{-n+d+2} \; dX. \nonumber \\
& \equiv I +II. \label{fk_cm}
\end{align}
We first bound $I$ from above by $C R^d$. To do this, decompose $B(Q, R)$ into a disjoint union of Whitney cubes. Switching the order of integration, and summing over the cubes yields 
\begin{multline*}
\int_{B(Q,R)} \fint_{B(X, 16\delta_E(X)) \cap E} a(y, 2^8 \delta_E(X))^2 \; d\mu(y) \delta_E(X)^{-n+d} \; dX \\ \le C \int_{B(Q,R) \cap E} \int_0^R a(y, 2^8 r)^2 \; \dfrac{d\mu(y) dr}{r}.
\end{multline*}
One argues as in Lemma 5.89 in \cite{DFM19} to then show that 
\begin{align*}
\int_{B(Q,R) \cap E} \int_0^R a(y, 2^8 r)^2 \; \dfrac{d\mu(y) dr}{r} & \le C \int_{B(Q,R) \cap E} \int_0^R \alpha(y, 2^8 r)^2 \; \dfrac{d\mu(y) dr}{r}
\end{align*}
and so the Carleson measure estimate (\ref{alpha_cm}) implies $I \le CR^d$.

As for the term $II$ in \eqref{fk_cm}, we require more precise control on $\nu(x)$; we will choose them so that $\nu(x)$ are constant on certain Whitney regions outside $E$ as follows. Let 
\begin{align*}
\Omega_j & = \{ x \in B(Q,R) \; : \; 2^{-j-1}R < \delta_E(x) \le 2^{-j} R \}
\end{align*}
for $j \in \Z, j \geq 0$. Let $\eta \in (0, 1)$ be sufficiently small and fixed (to be determined below), and suppose that $B_i^j = B(x_i^j, \eta \delta_E(x_i^j))$ are a countable collection of balls covering $\Omega_j$ with bounded overlap. That is, we have that $\sum_i \chi_{B_i^j} \le M$ on $\Omega_j$. Such a cover exists by the Besicovitch Covering Theorem and, if we take $\eta$ sufficiently small we may assume that $B_i^j \subset \Omega_{j-1} \cup \Omega_j \cup \Omega_{j+1}$ for each $i, j$. We now need to pick the $\nu(x)$ more carefully.

\medskip

\noindent {\bf Claim:} We claim, and will prove below, that in each $B_i^j$ we can choose a flat measure $\nu_i^j$, supported on $V_i^j$, such that \eqref{thm:usfe_gen_e_ineq} holds for all $x\in B_i^j$ with the measure $\nu_i^j$. Further assume that $\nu_i^j(y) = a_i^j \HD^d|_{V_i^j}(y)$ with $C^{-1} \le a_i^j \le C$, and such that $V_i^j \in A(n,d)$ with $\delta_{V_i^j}(x_i^j) \simeq \delta_{E}(x_i^j)$ for each $i$ and $j$. Of course if $\gamma$ is chosen sufficiently small, then this implies also that $\delta_E \simeq \delta_{V_i^j}$  on $B_i^j$.

\medskip

The $d$-Ahlfors regularity assumption on $E$ implies that $|\Omega_j| \le | \{ x \in B(Q, R) \; : \; \delta_E(x) \le 2^j R \} | \le C R^d (2^{-j} R)^{n-d}$. Thus, there is a constant $C > 0$ (independent of $j$) such that $\#\{B_i^j\} \leq C2^{jd}$. This allows us to estimate $B$ brutally by
\begin{align*}
\int_{B(Q,R)} & \left|\partial_m  \left( \left|  \nabla D_{K, \nu(X)}(X) \right|^2 \right)  \right|^2   \delta_E(X)^{-n+d+2} \; dX \\
& \leq  \sum_{j \geq 0} \sum_{i \in \N} \int_{B_i^j} F_{K, \nu_{i,j}}^2 \delta_{V^i_j}(X)^{-n+d} \; dX \\
&\leq C \sum_{j\geq 0} 2^{jd} \sup_i \int_{B_i^j} F_{K, \nu_{i,j}}^2 \delta_{V^i_j}(X)^{-n+d} \; dX.
\end{align*}
We recall that $F_{K, \nu_i^j}$ and $\delta_{V^i_j}$ are invariant in directions parallel to $V^i_j$, so, arguing as above, there is a Whitney-type cube $\tilde{B}_i^j \subset \mathbb R^{n-d} \cong (V^i_j)^\perp$ (which contains the projection of $B_i^j$ onto $\mathbb R^{n-d}$) such that $$\int_{B_i^j} F_{K, \nu_{i,j}}^2(x) \delta_{V^i_j}(X)^{-n+d} \; dX \leq C_{\eta} (2^{-j}R)^d \int_{\tilde{B}_i^j} F_{K, \nu_{i,j}}^2(y) \delta_{V^i_j}(y)^{-n+d} \; d\mathcal H^{n-d}(y).$$ Letting $\lambda > 1$ be large enough (depending only on $n, d, \eta$ not on $j, i$) we can assume that $\tilde{B}_i^j \subset W_\lambda(2^{-j}R)\cap \mathbb R^{n-d}$. 

Putting everything together (and overestimating the integral on $\tilde{B}_i^j$ by the integral on $W_\lambda(2^{-j}R)$) we get that \begin{equation}\label{e:dealingwiththeflats}\begin{aligned} \int_{B(Q,R)} \left|\partial_m  \left( \left| \nabla D_{K, \nu(X)} \right|^2 \right)  \right|^2   \delta_E^{-n+d+2} \; dX &\lesssim R^d \sum_{j} \int_{(V_i^j)^\perp \cap W_\lambda(2^{-j}R)}F_{K, \nu_{i,j}}^2 \delta_{V^i_j}^{-n+d} \; d\mathcal H^{n-d}\\ &\leq CR^d\sum_j \gamma_{K,\lambda, \alpha}^2(2^{-j}R) \leq C R^d,\end{aligned}\end{equation}
where the penultimate inequality follows from Lemma \ref{lemma:usfeflat} and the final inequality follows from bounding the dyadic sum by the scale invariant integral $\int_0^\infty \gamma_{K, \lambda, \alpha}^2(r)\, dr/r$. Summing over $1 \le m \le n$, we get $II \le C R^d$. In summary, we have bound both terms, $I, II$ in \eqref{fk_cm}, and thus established the USFE, provided that $\nu_i^j$ can be chosen as in the claim above.  

To prove the claim, we can compute explicitly $\left| \partial_i  \left( \left|  \nabla D_{K, \mu}(x) \right|^2 \right)  -  \partial_i  \left( \left|  \nabla D_{K, \nu}(x) \right|^2 \right)  \right|$ as a sum, apply the triangle inequality, and simply estimate each term of the form $|\nabla^j R_{K, \mu}(x) - \nabla^j R_{K, \nu}(x)|$ where $\nabla^j$ is an iterated derivative. Let us consider the simple case, which is when $j=0$.

Let $x \in \Omega$, $r_0= r_0(x) = \delta_E(x)$, and $r_k= r_k(x) = 2^k r_0$ for $k \ge 0$. Let $\phi$ be a fixed smooth bump function so that $0 \le \phi \le 1$, $\phi$ is radial, and $\phi \equiv 1$ on $B(0, 8r_0)$, and $\phi \equiv 0$ outside $B(0, 16 r_0)$. Define $\phi_0 = \phi$, and $\phi_k(x) = \phi(2^{-k}x) - \phi(2^{-k+1}x)$ for $k \ge 1$. Note that $\phi_k$ is supported in $A_k \equiv \overline{B}(0,2^{k+4 }r_0) \setminus B(0, 2^{k+2} r_0)$, and $\sum_{k \ge 0} \phi_k =1$.

Arguing as in \cite{DEMMAGIC} we may choose flat measures $\nu_k = \lambda_k \HD^d|_{P_k}$ where $\lambda_k >0$ and $P_k \in A(n,d)$ so that the following hold:
\begin{align}
\mathcal{D}_{x, 64 r_k}(\mu, \nu_k) \le C\alpha(x, 64r_k)
\end{align}
with constant depending only on $n,d$ and the Ahlfors regularity constant for $\mu$. Moreover, we may choose such measures so that $C^{-1} \le \lambda_k \le C$ and so that $P_k \cap B(x, (3/2) r_k) \ne \emptyset$. Remark also, that if $|x-z| \le \gamma \delta_E(x)$ for $\gamma \in (0, 1/2)$ sufficiently small and fixed, then we have that 
\begin{align*}
\mathcal{D}_{z, 2^{5 + k} \delta_E(z)}(\mu, \nu_k)  \le C \mathcal{D}_{x, 2^{6 + k} \delta_E(x)}(\mu, \nu_k)  \le C \alpha(x, 2^{k+6} \delta_E(x)) \le C \alpha(z, 2^{k+7} \delta_E(z)).
\end{align*}
If $\gamma$ is small enough, we can also guarantee that $P_k \cap B(z, 2\delta_E(z)) \ne \emptyset$ for such $z$. In particular, we can choose the flat measures $\nu_k$ to be constant on $B(x, \gamma \delta_E(x))$ and maintain
\begin{align}
\mathcal{D}_{z, 32 r_k}(\mu, \nu_k) \le C\alpha(z, 128r_k), \; \; P_k \cap B(z, 2 \delta_E(z)) \ne \emptyset
\end{align}
for $z \in B(x, \gamma \delta_E(x))$.

Set $\nu  = \nu(x) = \nu_0$, $P = P(x) = P_0$, and let $z \in B(x, \gamma \delta_E(x))$. We use the $\nu_k$ to estimate $|R_{K, \mu}(z)  - R_{K, \nu}(z)|$.  Without loss of generality, we may assume that $0 \in P \cap B(x, 2r_0)$. A direct computation yields 
\begin{align*}
\left| R_{K, \mu}(z) - R_{K, \nu}(z) \right| & = \left |\sum_{k \ge 0} \int_{A_k} \phi_k(y) \dfrac{K(z-y)}{|z-y|^{d+\alpha}} \; ( d\mu - d\nu) (y) \right|.
\end{align*}
Since $K$ is distance-standard, we have that $\phi_k(y) K(z-y)|z-y|^{d+\alpha}$ is Lipschitz in $y$ with constant at most $C r_{k}^{-d-\beta -1}$ (here, we are using that $\nabla K(w)|w|  \in L^\infty(\R^n)$). Moreover, this function vanishes outside of $B(0, 2^{k+4} r_0) \subset B(0, 2^{k+5} r_0)$. The definition of $\mathcal{D}$ thus gives
\begin{align*}
\left | \int_{A_k} \phi_k(y) \dfrac{K(z-y)}{|z-y|^{d+\alpha}} \; (d\mu - d \nu) (y) \right| \le C r_k^{-\beta} \mathcal{D}_{z, 2^{k+5} r_0}(\mu, \nu).
\end{align*}
By the triangle inequality for $\mathcal{D}$, we have that 
\begin{align*}
\mathcal{D}_{z, 2^{k+5} r_0}(\mu, \nu) \le \mathcal{D}_{z, 2^{k+5} r_0}(\mu, \nu_k) + \sum_{l=1}^k \mathcal{D}_{z, 2^{k+5} r_0}(\nu_l, \nu_{l-1}).
\end{align*}
One argues as in the proof of the equation (5.83) in \cite{DFM19} to obtain $\mathcal{D}_{z, 2^{k+5} r_0}(\nu_l, \nu_{l-1}) \le C \alpha(z, 2^{l+7} r_0)$ since the measures $\nu_i$ are flat, pass near $z$, and approximate $\mu$ well in $B(z, 2^{l+7}r_0)$. It follows that 
\begin{align*}
\left|R_{K, \mu}(z) - R_{K, \nu}(z) \right| & \le C \sum_{k \ge 0 } r_k^{-\beta} \sum_{0 \le l \le k} \alpha(z, 2^{l + 7} r_0) \le C \sum_{l \ge 0} r_l^{-\beta} \alpha(z, 2^{l+7} r_0).
\end{align*}
If $y \in B(z, 16 \delta_E(z))$, then since $\alpha(z, 2^{l+7} r_0) \le C \alpha(y, 2^{l+8} \delta_E(z))$, we have that for all such $y$,
\begin{align*}
\left|R_{K, \mu}(z) - R_{K, \nu}(z) \right| & \le C \sum_{l \ge 0} r_l^{-\beta} \alpha(z, 2^{l+7} r_0) \\ 
& = C r_0^{-\beta} \sum_{l \ge 0} 2^{- \beta l} \alpha(y, 2^{l+8} r_0)   = C \delta_E(x)^{- \beta} \sum_{l \ge 0} 2^{- \beta l} \alpha(y, 2^{l+8} r_0).
\end{align*}
This is the desired estimate for $R_{K, \mu}$, but using the same methods as above, we can show that 
\begin{align*}
\left | \nabla^j R_{K, \mu}(z) - \nabla^j R_{K, \nu}(z) \right| \le C \delta_E(z)^{- \beta - j} \sum_{l \ge 0} 2^{-(\beta + j)l} \alpha(y, 2^{l+8} r_0)
\end{align*}
for each iterated integral with $j = 1,2$, and for $y \in B(z, 16 \delta_E(z))$. Remark as well that since $K$ is distance-standard, we know that $\left| \nabla^j R_{K ,\mu}(z) \right| \le C \delta_E(z)^{-\beta - j}$ for $j = 0,1,2$, just as in the proof of Lemma \ref{lemma:usfeflat}. We can argue as in \cite{DEMMAGIC} to show that $ \left | \nabla^j R_{K ,\nu}(z) \right | \le C \delta_E(z)^{-\beta -j}$ as well, due to the fact that $|y-z| \ge r_0/2$ for $y \in P_0$. 

From here, our estimate follows from the usual process of estimating the terms in 
\begin{align*}
\left| \partial_i  \left( \left|  \nabla D_{K, \mu}(z) \right|^2 \right)  -  \partial_i  \left( \left|  \nabla D_{K, \nu}(z) \right|^2 \right)  \right|
\end{align*}
by brute force i.e. using the bounds $|\nabla^j R_{K, \mu}(z)| \lesssim \delta_E(z)^{-\beta - j}$ and $|\nabla^j R_{K, \nu}(z)| \lesssim \delta_E(z)^{-\beta - j}$, along with the estimates $\left|\nabla^j  R_{K, \mu}(z) -  \nabla^j R_{K, \nu}(z) \right|  \le C \delta_E(z)^{-\beta - j} \sum_{l \ge 0} 2^{-(\beta + j) l} \alpha(y, 2^{l + 8} r_0)$
to show $$\left| \partial_i  \left( \left|  \nabla D_{K, \mu}(z) \right|^2 \right)  -  \partial_i  \left( \left|  \nabla D_{K, \nu}(z) \right|^2 \right)  \right|
\leq C \delta_E(x)^{-1} \sum_{l \ge 0} 2^{-(\beta + 1)l} \alpha(y, 2^8 r_0),$$ with the $\nu$ chosen as in the claim above (this argument works exactly as in the proof of Lemma \ref{lemma:usfeflat}). That concludes our proof of the claim, i.e. that \eqref{thm:usfe_gen_e_ineq} holds with the special choices of $\nu$ described above. The theorem follows. 
\end{proof}

Given Theorem \ref{T1}, it is natural to search for a sufficient condition on $K$ which implies that $D_K$ satisfies \eqref{eqn:good_for_dist} and is easier to verify in practice. In the radial setting this condition is captured by a Dini-type closeness. Recall that this is half of Theorem \ref{thm:rad_usfe} which we restate here for convenience: 

\begin{thm}
Suppose that $K \in C^3(\R^n \setminus \{0\})$ is radial, distance-standard, and $\nabla^3 K(x) |x|^3 \in L^\infty(\R^n)$. Further assume that \begin{align}
\int_0^1 \left(t^{m}  \dfrac{d^m}{dt^m} \left(K(t) - K_0 \right) \right)^2  \; \dfrac{dt}{t} + \int_1^\infty \left(t^{m}  \dfrac{d^m}{dt^m} \left(K(t) - K_\infty \right) \right)^2  \; \dfrac{dt}{t} < \infty \label{cond:rad_usfe}
\end{align}
for some constants $K_0, K_\infty >0$ and for $m=0,1,2$. Then $D_K$ satisfies the USFE in $\Omega = \R^n \setminus \text{spt } \mu$ for any $d$-uniformly rectifiable measure $\mu$. \label{thm:rad_usfe1}
\end{thm}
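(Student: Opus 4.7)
By Theorem \ref{T1} it suffices to verify that $K$ is uniformly good for distances, i.e., $\int_0^\infty \gamma_{K, \lambda, \alpha}(r)^2\, \tfrac{dr}{r} < \infty$ for some $\lambda > 1$. Since $K$ is radial, $R_K$ is rotation invariant, so the supremum over planes in the definition of $\gamma$ is attained at any single $V$; I will fix $V = \mathbb R^d \subset \mathbb R^n$ so that $\delta_V(z) = |z| =: s$ on $V^\perp$. Expanding the integral defining $R_K(z)$ in polar coordinates around $P_V(z) = 0$ and substituting $t = su$ yields the separation of variables
\begin{align*}
R_K(z) = s^{-\alpha} \Psi(s), \qquad \Psi(s) := c_{n,d,\alpha}\int_1^\infty K(su)\,\omega_0(u)\, du, \qquad \omega_0(u) := \tfrac{(u^2-1)^{(d-2)/2}}{u^{d+\alpha-1}}\chi_{[1,\infty)}(u),
\end{align*}
with $\omega_0 \in L^1((1,\infty))$ for every $d \ge 1$ and $\alpha > 0$ (the endpoint $u = 1$ is integrable even when $d = 1$). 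Constant kernels $K \equiv c_\star$ yield $\Psi \equiv c_\star A$ with $A := c_{n,d,\alpha}\|\omega_0\|_1$.

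My plan is to choose the free constant in $\gamma_{K,\lambda,\alpha}(r)^2$ adaptively as $c = K_0 A$ for $r \le 1$ and $c = K_\infty A$ for $r > 1$; writing $\Delta(s) := \Psi(s) - c_\star A$ with $c_\star \in \{K_0, K_\infty\}$, this makes $R_K(z) - c\delta_V(z)^{-\alpha} = s^{-\alpha}\Delta(s)$. Using the pointwise upper bound $|\nabla^m f(z)|^2 \lesssim \sum_{k=0}^m |z|^{-2(m-k)}|g^{(k)}(|z|)|^2$, valid for any radial $f(z) = g(|z|)$, applied to $g(s) = s^{-\alpha}\Delta(s)$, together with polar coordinates on $V^\perp$, will reduce the defining integral of $\gamma^2$ to
\begin{align*}
\gamma_{K,\lambda,\alpha}(r)^2 \lesssim \int_{r/\lambda}^r \bigl(|\Delta(s)|^2 + |s\Psi'(s)|^2 + |s^2\Psi''(s)|^2\bigr)\,\tfrac{ds}{s}.
\end{align*}
Differentiating under the integral defining $\Psi$ and reverting to the variable $t = su$ will then give the uniform representation of each of $\Delta(s)$, $s\Psi'(s)$, $s^2\Psi''(s)$ as $c_{n,d,\alpha}\int_1^\infty \phi(su)\,\omega_0(u)\,du$ with $\phi \in \{K(t) - c_\star,\, tK'(t),\, t^2 K''(t)\}$ respectively.

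To conclude, I will apply Cauchy--Schwarz with respect to the finite weight $\omega_0(u)\, du$, followed by Fubini (swapping the $r$- and $s$-integrations costs only a factor of $\log\lambda$) and the substitution $t = su$, reducing each of the three contributions to a model integral of the form $\int_1^\infty \omega_0(u)\int_{I(u)}|\phi(t)|^2\,\tfrac{dt}{t}\,du$ over a suitable range $I(u) \subset (0,\infty)$. For $\phi \in \{tK'(t),\, t^2K''(t)\}$ the inner integral is dominated by $\int_0^\infty |\phi(t)|^2\,dt/t$, which is finite by \eqref{cond:rad_usfe} with $m = 1,2$ applied to both $K - K_0$ and $K - K_\infty$ (whose derivatives agree with those of $K$). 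For $\phi = K(t) - c_\star$, the inner integral splits into the Dini-finite piece \eqref{cond:rad_usfe} with $m = 0$ near the relevant endpoint plus a contribution from the complementary bounded range of $t$, where distance-standardness of $K$ gives an $L^\infty$ bound on $K - c_\star$; the resulting $\log u$ factor is absorbed by the $u^{-1-\alpha}$ tail of $\omega_0$. The main technical subtlety (more nuisance than genuine obstacle) is the mismatch at $r = 1$ between the two adaptive choices $c_\star \in \{K_0, K_\infty\}$; this is handled by isolating $r \in (0, 1]$ and $r \in (1, \infty)$ in the outer integral and noting that in the bounded transition range $s \in [1/\lambda, \lambda]$ the quantities $\Delta, \Psi', \Psi''$ are uniformly bounded thanks to distance-standardness, so their contribution to the log-scale integral $\int ds/s$ is finite.
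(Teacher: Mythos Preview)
Your proposal is correct and follows essentially the same route as the paper: both reduce to Theorem~\ref{T1} and then verify the uniformly-good-for-distances condition via Cauchy--Schwarz/Jensen against the integrable weight coming from the polar-coordinate representation of $R_K$ on $V^\perp$, followed by Fubini and the Dini hypothesis~\eqref{cond:rad_usfe}. The paper packages this verification as Lemma~\ref{lemma:pert_char} and works directly with $R_K - R_{K_0}$, carrying out the $m=0$ case in detail and declaring the $m=1,2$ cases analogous; your separation-of-variables formulation $R_K(z) = s^{-\alpha}\Psi(s)$ is a clean way to make those derivative cases explicit, reducing all three to the single template $c_{n,d,\alpha}\int_1^\infty \phi(su)\,\omega_0(u)\,du$, and your handling of the $r\approx 1$ transition and the $\log u$ tail is exactly the mechanism the paper uses (their ``$+\,C\|K-K_0\|_\infty^2$'' term).
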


 Theorem \ref{thm:rad_usfe1} follows from the subsequent lemma and Theorem \ref{T1}:
 
\begin{lemma}\label{lemma:pert_char}
Suppose that $K$ is a distance-standard radial kernel, and $K_0, K_\infty$ are positive constants so that  
\begin{align}
\int_0^1 \left(t^{m}  \dfrac{d^m}{dt^m} \left(K(t) - K_0 \right) \right)^2  \; \dfrac{dt}{t} + \int_1^\infty \left(t^{m}  \dfrac{d^m}{dt^m} \left(K(t) - K_\infty \right) \right)^2  \; \dfrac{dt}{t}  \equiv M_m < \infty \label{rmk:suff_for_*m}
\end{align}
for $m=0,1,2$. Then there is a constant $C_\alpha>0$ depending only on $n, d, \alpha > 0$ and the distance-standard constants of $K$ so that for any $\alpha >0$,
\begin{align}
\int_0^\infty \gamma_{K, 2, \alpha}(r)^2 \; \dfrac{dr}{r} \le C_\alpha( 1 + M_0 + M_1+ M_2).
\end{align}
In particular, $K$ is uniformly good for distances for any exponent.
\end{lemma}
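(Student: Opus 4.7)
The plan is to reduce the estimate on $\gamma_{K,2,\alpha}(r)$ to a one-dimensional estimate for a scalar function $\Phi(\rho)$ built from $K$, and then combine the Poincar\'e inequality with Minkowski's integral inequality to obtain $\int_0^\infty \gamma_{K,2,\alpha}(r)^2 \, dr/r \lesssim M_1 + M_2$. Notice that $M_0$ will not actually be used, but this is harmless in the bound claimed by the lemma.

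\textbf{Step 1 (reduction to a scalar function).} By rotation invariance and the radiality of $K$, it suffices to bound the integrand in \eqref{eqn:gamma} for a fixed $V = \R^d \subset \R^n$. Since $R_{K, V, \alpha}$ is then rotationally symmetric on $V^\perp$, we write $R_{K,V,\alpha}(z) = F(\rho)$ with $\rho := |z| = \delta_V(z)$, and by parametrizing in polar coordinates in $V$ about $P_V(z)$ (exactly as in the proof of Theorem \ref{thm:dist_exact_radial}) we obtain
\[ F(\rho) = c_d \, \rho^{-\alpha} \int_1^\infty K(\rho\tau)\, k(\tau)\, d\tau, \qquad k(\tau) := \tau^{1-d-\alpha}(\tau^2-1)^{(d-2)/2}. \]
Since $\alpha > 0$ and $(d-2)/2 > -1$, $k \in L^1([1,\infty), d\tau)$. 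Setting $\Phi(\rho) := \rho^\alpha F(\rho) = c_d \int_1^\infty K(\rho \tau) k(\tau) \, d\tau$, we see that $\Phi$ is constant whenever $K$ is; thus subtracting the constant $c$ in \eqref{eqn:gamma} corresponds to subtracting the constant $c$ from $\Phi$.

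\textbf{Step 2 (radial identities and Poincar\'e).} Putting $U(\rho) := F(\rho) - c\rho^{-\alpha}$, a direct computation yields the linear identities
\[ \rho^\alpha U = \Phi - c, \quad \rho^{\alpha+1} U' = \rho\Phi' - \alpha (\Phi - c), \quad \rho^{\alpha + 2} U'' = \rho^2 \Phi'' - 2\alpha \rho \Phi' + \alpha(\alpha+1)(\Phi - c). \]
Using $|\nabla_z^m u|^2 \lesssim_n \sum_{j\leq m} \rho^{2(j-m)} |U^{(j)}(\rho)|^2$ for a radial function $u(z) = U(|z|)$, and that $\int_{V^\perp \cap W_2(r)} f(|z|) \delta_V(z)^{-n+d} \, d\HD^{n-d}(z) \sim \int_{r/2}^r f(\rho) \,d\rho/\rho$, we conclude
\[ \gamma_{K, 2, \alpha}(r)^2 \lesssim_{n, d, \alpha} \inf_{c\in \R} \int_{r/2}^r \Bigl[(\Phi(\rho) - c)^2 + (\rho\Phi'(\rho))^2 + (\rho^2 \Phi''(\rho))^2 \Bigr]\, \frac{d\rho}{\rho}. \]
Choosing $c = c(r)$ as the logarithmic average of $\Phi$ over $[r/2, r]$ and applying the one-dimensional Poincar\'e inequality in the variable $s = \log\rho$ (the interval $[\log(r/2),\log r]$ has fixed length $\log 2$) absorbs the $m=0$ term into the $m=1$ term. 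By Fubini,
\[ \int_0^\infty \gamma_{K, 2, \alpha}(r)^2 \, \frac{dr}{r} \lesssim_{n,d,\alpha} \int_0^\infty \Bigl[(\rho\Phi'(\rho))^2 + (\rho^2 \Phi''(\rho))^2 \Bigr]\, \frac{d\rho}{\rho}. \]

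\textbf{Step 3 (Minkowski's integral inequality).} Differentiating $\Phi$ under the integral and rearranging factors of $\tau$,
\[ \rho\Phi'(\rho) = c_d \int_1^\infty (\rho\tau) K'(\rho\tau)\, k(\tau)\, d\tau, \qquad \rho^2 \Phi''(\rho) = c_d \int_1^\infty (\rho\tau)^2 K''(\rho\tau)\, k(\tau)\, d\tau. \]
Since $\|f(\,\cdot\, \tau)\|_{L^2(d\rho/\rho)} = \|f\|_{L^2(ds/s)}$ for every fixed $\tau > 0$, Minkowski's integral inequality applied with the fixed weight $k(\tau) \, d\tau$ yields
\[ \|\rho\Phi'\|_{L^2(d\rho/\rho)} \leq c_d\, \|k\|_{L^1(d\tau)}\, \|s K'(s)\|_{L^2(ds/s)} \lesssim \sqrt{M_1}, \]
and analogously $\|\rho^2 \Phi''\|_{L^2(d\rho/\rho)} \lesssim \sqrt{M_2}$; here the constants $K_0, K_\infty$ drop out under differentiation, so the hypothesis \eqref{rmk:suff_for_*m} with $m \geq 1$ indeed gives $\int(sK'(s))^2\, ds/s = M_1$ and $\int (s^2 K''(s))^2\, ds/s = M_2$. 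Combined with Step 2 this produces the desired bound $\int_0^\infty \gamma^2\, dr/r \lesssim M_1 + M_2 \leq 1 + M_0 + M_1 + M_2$. The main technical obstacle is the bookkeeping between the Euclidean gradients on $V^\perp$ and the one-variable derivatives of $\Phi$; once the identities of Step 2 are in hand, Poincar\'e and Minkowski are classical one-line estimates.
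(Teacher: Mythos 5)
Your proof is correct, and it takes a genuinely different route from the paper's. The paper does not optimize over $c$: it fixes the comparison constants $c_0$ (for $r<1$) and $c_\infty$ (for $r>1$) coming from the constant kernels $K_0,K_\infty$, applies Jensen's inequality to the integral representation $R_K(z)-R_{K_0}(z)=\int_V (K(z-y)-K_0)|z-y|^{-d-\alpha}\,d\HD^d(y)$ to pass the square inside, and then uses Fubini and a change of variables to land on $\int_0^1(K(t)-K_0)^2\,dt/t$ plus tail terms bounded by $\|K-K_0\|_\infty^2$; this is where the additive $1$ and the $M_0$ in the stated bound come from, and the $m=1,2$ terms are treated "in the same way.'' You instead exhibit $\rho^{\alpha}R_K$ as a multiplicative convolution $\Phi(\rho)=c_d\int_1^\infty K(\rho\tau)k(\tau)\,d\tau$ with $k\in L^1$, choose the scale-dependent optimal constant $c(r)$, absorb the $m=0$ term into the $m=1$ term by the logarithmic Poincar\'e inequality, and control the remaining terms by Minkowski's integral inequality using the dilation invariance of $L^2(d\rho/\rho)$. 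Your argument yields the sharper conclusion $\int_0^\infty\gamma_{K,2,\alpha}(r)^2\,dr/r\lesssim M_1+M_2$ (no additive constant, no $M_0$), which of course implies the stated bound; the trade-off is that it leans on the radial convolution structure, whereas the paper's Jensen--Fubini scheme transfers with little change to the non-radial perturbative statement of Lemma \ref{lemma:general_suff}. All the individual steps check out: the polar-coordinate identity for $F(\rho)$ matches the computation in Theorem \ref{thm:dist_exact_radial}, $k\in L^1([1,\infty))$ precisely because $d>0$ and $\alpha>0$, the linear identities relating $\rho^{\alpha+m}U^{(m)}$ to $(\Phi-c)$, $\rho\Phi'$, $\rho^2\Phi''$ are correct, differentiation under the integral is justified by the distance-standard bounds, and the single constant $c(r)$ is legitimately used for all three values of $m$ since the infimum in \eqref{eqn:gamma} is taken once per scale.
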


\begin{proof}
Fix $\alpha > 0$. As is often the case we omit in the notation of $R_{K, V, \alpha}$ the dependence on $V$ and $\alpha$ because they are fixed. Furthermore, since $K$ is radial we know that $R_{K,V,\alpha}(x)$ depends only on the $\delta_V(x)$ and, in particular, not on the plane $V \in G(n,d)$. Putting all this together it suffices to estimate: 
\begin{equation}\label{eqn:mod14}
 \sum_{j \in \Z} \inf_{c \in \R} \sum_{m=0}^2 \int_{V^\perp \cap W_4(2^j)}    \left | \delta_{V}(x)^{\alpha + m} \nabla^m \left( R_{K}(x) - c \delta_{V}(x)^{-\alpha} \right) \right|^2 \,  \delta_{V}(x)^{-n+d} \; d\HD^{n-d}(z). 
\end{equation}

We will do the $m= 0$ case, since the other cases follow in the same way. Since $K_0, K_\infty$ are constants, we know that there are constants $c_0, c_\infty >0$ so that $R_{K_0} \equiv c_0 \delta_{V}^{-\alpha}$ and $R_{K_\infty} = c_\infty \delta_{V}^{-\alpha}$. Applying Jensen's inequality and rotational invariance,
\begin{align*}
& \int_{V^\perp \cap B(0,1)} \left | R_{K}(z) - c_0 \delta_{V}(z)^{-\alpha} \right|^2 \delta_{V}(z)^{-n+d+2\alpha }  \; d\HD^{n-d}(z)  \\
& = \int_{V^\perp \cap B(0,1)} \left | R_{K}(z) - R_{K_0}(z) \right|^2 \delta_{V}(z)^{-n+d+2\alpha }  \; d\HD^{n-d}(z)  \\
&  = \int_{V^\perp \cap B(0,1)} \left | \int_{V} \dfrac{K(z-y) - K_0}{|z-y|^{d+\alpha}} \; d\HD^d(y)   \right |^2 \; \delta_{V}(z)^{-n+d+2\alpha }  \; d\HD^{n-d}(z) \\
& \le  \int_{V^\perp \cap B(0,1)} \left ( \int_{V} \dfrac{|K(z-y) - K_0|}{|z-y|^{d+\alpha}} \; d\HD^d(y)   \right )^2 \; \delta_{V}(z)^{-n+d+2\alpha }  \; d\HD^{n-d}(z) \\
& = C \int_{0}^1 \int_{V} \dfrac{(K(z-y) - K_0)^2 }{|z-y|^{d+\alpha}} \; d\HD^d(y) \delta_{V}(z)^{\alpha-1} \;d\delta_V(z) \\
& = C  \int_{0}^1\int_{\rho}^\infty (K(t) - K_0)^2 t^{-d-\alpha+1}\bigl(t^2-\rho^2\bigr)^{\frac{d-2}{2}}\, dt \rho^{\alpha-1}\, d\rho\\
&\leq C\int_0^1 \int_{\rho}^{1}(K(t) - K_0)^2 t^{-d-\alpha+1}\bigl(t^2-\rho^2\bigr)^{\frac{d-2}{2}}\, dt \rho^{\alpha-1}\, d\rho + C\|K - K_0\|^2_{L^\infty}.
\end{align*}
Apply Fubini and a change of variables to get
\begin{align*}
&\int_0^1 \int_{\rho}^{1}(K(t) - K_0)^2 t^{-d-\alpha+1}\bigl(t^2-\rho^2\bigr)^{\frac{d-2}{2}}\, dt \rho^{\alpha-1}\, d\rho  \\
&\leq C\int_0^1 (K(t) - K_0)^2  \int_0^t \Bigl(1-\left(\frac{\rho}{t}\right)^2\Bigr)^{\frac{d-2}{2}}\left(\frac{\rho}{t}\right)^{\alpha} \, \frac{d\rho}{\rho}\, \frac{dt}{t}\\
&=C\int_0^1 (K(t) - K_0)^2 \, \frac{dt}{t},
\end{align*}
since the interior integral in the second line converges for all $d, \alpha > 0$. 

We can estimate  $$\int_{V^\perp\cap B(0,1)^c} \left | R_{K}(z) - c_\infty \delta_{V}(z)^{-\alpha} \right|^2 \delta_{V}(z)^{-n+d+2\alpha }  \; d\HD^{n-d}(z)$$ the same way, and putting all these estimates together we have that $$\begin{aligned} &\sum_{j \in \Z} \inf_{c \in \R} \int_{V^\perp \cap W_4(2^j)}    \left | \delta_{V}(x)^{\alpha }  \left( R_{K}(x) - c \delta_{V}(x)^{-\alpha} \right) \right|^2 \,  \delta_{V}(x)^{-n+d} \; d\HD^{n-d}(z)\\ &\leq C\left(\int_0^1 \left(K(t) - K_0 \right)^2  \; \dfrac{dt}{t} + \int_1^\infty \left(K(t) - K_\infty \right)^2  \; \dfrac{dt}{t} + \|K-K_0\|_{L^\infty}^2 + \|K - K_\infty\|_{L^\infty}^2\right),\end{aligned}$$ for any choice of $K_0, K_\infty$. Notice that each of the integrals above converge by assumption. Moreover, for the integrals to converge it must be the case that $K_0, K_\infty \leq 4 \|K\|_{L^\infty}$ and so we can bound $\|K- K_0\|_{L^\infty}, \|K-K_\infty\|_{L^\infty}$ by the distance-standard constants of $K$. Thus we have completed our proof when $m = 0$ and, as mentioned above, the rest of the argument follows similarly. 
\end{proof}

As was the case for non-tangential limits, our condition for general kernels $K$ is less clean, due to the richness of the family of distance-exact kernels. The following Lemma is proven is much the same way as above, so we omit the argument. 
\begin{lemma}\label{lemma:general_suff}
Suppose that $K$ is distance-standard, and $K_0, K_\infty$ are $(d,\alpha)$-distance-exact kernels. Then there is a constant $C >0$ depending only on $n, d, \alpha$ and the distance-standard constants for $K, K_0$ and $K_\infty$ so that 
\begin{align}
\int_0^\infty \gamma_{K,2, \alpha}(r)^2 \; \dfrac{dr}{r} \le C \left ( 1 + \int_0^1 \theta_{K, K_0}( r)^2 \; \dfrac{dr}{r} + \int_1^\infty \theta_{K, K_\infty}(r)^2  \; \dfrac{dr}{r}) \right)
\end{align}
where for $r \in (0,1)$, we define
\begin{align*}
\theta_{K, K_0}(r)^2 \equiv  \sup_{V \in G(n,d)}  \sum_{m=0}^2 \int_{(V^\perp \cap W_2(r)) \times (B(0,1) \cap V)}  \left| |x|^m \nabla^m (K(x)  \right . & \left . - (K_0)(x)) \right|^2  \\
& \times |x|^{-d-\alpha} \delta_{V}(x)^{-n+d+\alpha} \; dx 
\end{align*}
and for $r > 1$ we define
\begin{align*}
\theta_{K, K_\infty}(r)^2 \equiv  \sup_{V \in G(n,d)}  \sum_{m=0}^2 \int_{(V^\perp \cap W_2(r)) \times V}  \left| |x|^m \nabla^m (K(x) \right . & -  \left . (K_\infty)(x)) \right|^2  \\
& \times  |x|^{-d-\alpha} \delta_{V}(x)^{-n+d+\alpha} \; dx. 
\end{align*}
In particular, if $\int_0^1 \theta_{K, K_0}(r)^2 dr/r  + \int_1^\infty \theta_{K, K_\infty}(r)^2  dr/r < \infty$, then $K$ is uniformly good for distances for any exponent $\alpha >0$.
\end{lemma}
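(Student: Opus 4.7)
The plan is to follow the same template as the proof of Lemma \ref{lemma:pert_char}, with the crucial adaptation that for a general $(d,\alpha)$-distance-exact kernel $K_0$ and a fixed plane $V \in G(n, d)$, distance-exactness produces a constant $c_V^0 \in \R$ with $R_{K_0, V, \alpha}(z) = c_V^0\, \delta_V(z)^{-\alpha}$ (and similarly $c_V^\infty$ for $K_\infty$). This is the feature that made the analysis work ``for free'' in the radial case, since constants are automatically distance-exact. In evaluating the infimum over $c \in \R$ defining $\gamma_{K, 2, \alpha}(r)^2$, I would choose $c = c_V^0$ when $r \leq 1$ and $c = c_V^\infty$ when $r > 1$; this turns the $c\, \delta_V(z)^{-\alpha}$ term into $R_{K_0, V, \alpha}(z)$ or $R_{K_\infty, V, \alpha}(z)$, reducing the task to bounding, for $m = 0, 1, 2$, the quantity
\[
\mathcal{I}_{V, m}(r) := \int_{V^\perp \cap W_2(r)} \bigl|\delta_V(z)^{\alpha + m}\, \nabla^m(R_{K, V, \alpha} - R_{K_\bullet, V, \alpha})(z)\bigr|^2\, \delta_V(z)^{-n + d}\, d\HD^{n-d}(z),
\]
where $\bullet \in \{0, \infty\}$ according to the scale.

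Next I would differentiate under the integral via Leibniz,
\[
\nabla^m(R_K - R_{K_\bullet})(z) = \sum_{k=0}^{m} \binom{m}{k} \int_V \nabla^k(K - K_\bullet)(z - y)\, \nabla^{m-k}\bigl(|z - y|^{-d - \alpha}\bigr)\, d\HD^d(y),
\]
use the pointwise bound $|\nabla^{m-k}(|z-y|^{-d-\alpha})| \lesssim |z-y|^{-d-\alpha-(m-k)}$, and apply Cauchy--Schwarz to each term after the weight-split
\[
\bigl(|z-y|^k\, |\nabla^k(K - K_\bullet)(z - y)| \cdot |z-y|^{-(d + \alpha)/2}\bigr) \cdot |z-y|^{-(d + \alpha)/2 - m}.
\]
The second factor squared integrates against $\HD^d|_V$ to $\lesssim \delta_V(z)^{-\alpha - 2m}$, which combines with the prefactor $\delta_V(z)^{2\alpha + 2m}$ to leave only $\delta_V(z)^{\alpha}$. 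Swapping integrations by Fubini and performing the Lebesgue-preserving change of variables $x = z - y$ (with $z \in V^\perp$ and $y \in V$, noting that $\delta_V(x) = |z| = \delta_V(z)$), the estimate reduces for each $k \le m$ to an integral of the form
\[
\int_{\{x\,:\, \delta_V(x) \in [r/2, r]\}} \int_{V} \bigl(|x|^k\, |\nabla^k(K - K_\bullet)(x)|\bigr)^2\, |x|^{-d-\alpha}\, \delta_V(x)^{-n + d + \alpha}\, d\HD^d(y)\, dx.
\]

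For $r \leq 1$ I would split the $y$-integration into $|y| \leq 1$ and $|y| > 1$. The first regime, summed over $k = 0, 1, 2$, reproduces exactly the definition of $\theta_{K, K_0}(r)^2$. In the second regime, using the distance-standard bounds $|x|^k |\nabla^k(K - K_0)(x)| \lesssim 1$ together with $|z - y| \geq |y| \geq 1$ gives a per-scale contribution bounded by $\lesssim r^\alpha$ (via polar integration in both $V$ and $V^\perp$), hence a finite total contribution $\lesssim 1/\alpha$ after $\int_0^1 dr/r$---this is the source of the ``$+1$'' on the right-hand side. For $r > 1$ I would repeat the same argument with $K_\infty$ in place of $K_0$; here the $\theta_{K, K_\infty}$ definition already integrates $y$ over all of $V$, so no splitting is required. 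The only substantive new feature compared with Lemma \ref{lemma:pert_char} is the mixed Leibniz index $(m, k)$, but this fits cleanly into the $\theta$ functional since it already sums over $m = 0, 1, 2$. The uniform far-field tail estimate for $|y|$ large at small scales is the principal new technical ingredient, and is the only place where the distance-standard property of $K_0$ itself (rather than merely its distance-exactness) enters.
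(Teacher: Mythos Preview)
Your proposal is correct and is precisely the argument the paper has in mind: the authors omit the proof, writing that it ``is proven in much the same way as above'' (i.e., as Lemma \ref{lemma:pert_char}), and your outline is exactly that generalization---choose $c = c_V^0$ or $c = c_V^\infty$ coming from distance-exactness, apply Jensen/Cauchy--Schwarz with the weight $|z-y|^{-d-\alpha}$, change variables $x = z-y$ so that $\delta_V(x)=|z|$, and split the $V$-integration at scale $1$ for small $r$ to produce the ``$+1$'' from the far-field tail. One cosmetic point: after the change of variables your displayed integral should be a single integral over $x \in (V^\perp \cap W_2(r)) \times V$ rather than a double integral with a residual $\int_V \, d\HD^d(y)$, but this is only notational and does not affect the argument.
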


As a final goal of this subsection, we now investigate to what extent the sufficient conditions of Theorems \ref{T1}, \ref{thm:rad_usfe1} and Lemma \ref{lemma:general_suff} are sharp. Due to our inability to make the results of Theorem \ref{thm:dist_exact_radial} quantitative; that is to say, our lack of a theorem which says ``If $D_K$ is quantitatively close to being the distance than $K$ is quantitatively close to being a distance standard kernel", there is little hope for us to find necessary quantitative conditions on the kernels $K$, even in the radial setting. However, we are able to find some necessary conditions on $D_K$ for the USFE to hold outside of planes (and thus uniformly rectifiable sets).

We begin with a simple computation which shows that a condition not so far from being uniformly good for distances (e.g. Definition \ref{defn:gamma}) is necessary for satisfying the USFE outside of all planes: 

\begin{lemma}\label{l:poincare}
If $K$ is a distance standard kernel such that $F_K$ satisfies the USFE outside of all $d$-planes then for each $E\in G(n,d)$ there are constants $c_k$ such that
\begin{align*}
\sum_{k    \in \Z} \int_{A_k}|  |\nabla D_K|(z)  -  c_k|\nabla  \delta_E(z)| |^2 \delta_E(z)^{- n + d} \; d\HD^{n-d}(z) < \infty,
\end{align*}
where $A_k = E^\perp \cap \left( B(0, 2^k) \setminus B(0, 2^{k-1})\right)$.
\end{lemma}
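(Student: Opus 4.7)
\emph{Proof proposal.} The plan is to use the translation invariance of $D_K$ along the plane $E$ to upgrade the USFE into a global $L^2$-integrability statement on $E^\perp$, apply a Poincar\'e inequality on the dyadic annuli $A_k$ to control the oscillation of $g:=|\nabla D_K|^2$, and then extract a square root to pass from $g$ back to $|\nabla D_K|$.

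Since $E\in G(n,d)$ is a plane, $D_K$, $g$, and $F_K$ are all translation invariant in directions parallel to $E$. Centering the USFE ball at $0\in E$ and letting its radius tend to infinity, the Fubini/scaling argument of \eqref{eqn:fk_scale_inv}--\eqref{eqn:fk_scale_inv_2} yields
\begin{equation*}
\int_{E^\perp} F_K(z)^2\,\delta_E(z)^{-n+d}\,d\HD^{n-d}(z) \;<\; \infty.
\end{equation*}
Recalling $F_K=\delta_E|\nabla g|$ and that $\delta_E(z)\simeq 2^k$ on $A_k$, the scaling Poincar\'e inequality on $A_k$ (viewed as an $(n-d)$-dimensional annulus of diameter $\simeq 2^k$) gives, with $\bar g_k$ the (unweighted) average of $g$ on $A_k$,
\begin{equation*}
\int_{A_k}(g-\bar g_k)^2\,d\HD^{n-d} \;\lesssim\; 2^{2k}\int_{A_k}|\nabla g|^2\,d\HD^{n-d} \;\simeq\; \int_{A_k}F_K^2\,d\HD^{n-d}.
\end{equation*}
Multiplying by the essentially constant weight $\delta_E^{-n+d}\simeq 2^{-k(n-d)}$ and summing over $k\in\Z$,
\begin{equation*}
\sum_{k\in\Z}\int_{A_k}(g-\bar g_k)^2\,\delta_E^{-n+d}\,d\HD^{n-d} \;\lesssim\; \int_{E^\perp}F_K^2\,\delta_E^{-n+d}\,d\HD^{n-d} \;<\; \infty.
\end{equation*}

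To finish, set $c_k:=\sqrt{\bar g_k}$ and recall $|\nabla\delta_E|\equiv 1$ on $\Omega$, so the target integrand is $(|\nabla D_K|-c_k)^2$. The algebraic identity $(|\nabla D_K|-c_k)^2(|\nabla D_K|+c_k)^2=(g-\bar g_k)^2$ reduces matters to a lower bound on $|\nabla D_K|+c_k$. The distance-standard estimates readily give the upper bound $|\nabla D_K|\leq C$; what is actually needed, and is the main obstacle, is a \emph{lower} bound $|\nabla D_K|+c_k\geq c_0>0$, ruling out that $|\nabla D_K|$ degenerates on a non-negligible subset of $\Omega$. I would handle this by exploiting the comparability $D_K\simeq\delta_E$ (valid outside any $d$-plane for any distance-standard kernel): along any ray $t\mapsto t\omega$ in $E^\perp$ one has $D_K(t\omega)\simeq t$, so a fundamental theorem of calculus plus Cauchy--Schwarz forces $\bar g_k$ to be uniformly bounded below in $k$, and hence $c_k\geq c_0>0$ uniformly. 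Substituting back, $(|\nabla D_K|-c_k)^2\leq c_0^{-2}(g-\bar g_k)^2$ pointwise, and summing in $k$ together with the bound from the previous step completes the argument.
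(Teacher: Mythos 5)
Your first two steps are precisely the paper's proof: translation invariance along $E$ upgrades the USFE to $\int_{E^\perp}F_K^2\,\delta_E^{-n+d}\,d\HD^{n-d}<\infty$, and then a scaled Poincar\'e inequality on each annulus $A_k$ applied to $g=|\nabla D_K|^2$, with $c_k=\fint_{A_k}g$, gives
\begin{align*}
\sum_{k\in\Z}\int_{A_k}\left|\,|\nabla D_K(z)|^2-c_k\right|^2\,\delta_E(z)^{-n+d}\,d\HD^{n-d}(z)<\infty.
\end{align*}
This squared form is where the paper's proof stops, and it is also the form actually invoked later in Corollary \ref{c:almostconverse} and Theorem \ref{thm:necforusfe}; the unsquared expression in the lemma's statement appears to be a slip. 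So the part of the lemma that the paper proves and uses, you have reproduced faithfully.

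Your additional third step --- converting $|g-\bar g_k|$ into $|\,|\nabla D_K|-\sqrt{\bar g_k}\,|$ --- goes beyond the paper, and you correctly identify that it hinges on a uniform lower bound for $\bar g_k$. But the justification you sketch does not deliver it. The fundamental theorem of calculus along a ray, restricted to a single dyadic annulus, only gives
\begin{align*}
\int_{2^{k-1}}^{2^k}|\nabla D_K(t\omega)|\,dt\;\ge\;D_K(2^k\omega)-D_K(2^{k-1}\omega)\;\ge\;c_1 2^k-C_1 2^{k-1},
\end{align*}
where $c_1\delta_E\le D_K\le C_1\delta_E$; this is negative unless $C_1<2c_1$, which is not guaranteed. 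Integrating from $0$ instead yields a lower bound for the average over the ball $B(0,2^k)\cap E^\perp$, not over the annulus $A_k$, and it is consistent with $D_K\simeq\delta_E$ for $\nabla D_K$ to degenerate on an individual annulus. Chaining over $O(1)$ consecutive annuli only shows that \emph{some} annulus in each block has a non-degenerate average, which is not enough to run the pointwise bound $(|\nabla D_K|-c_k)^2\le c_k^{-2}(g-\bar g_k)^2$ on every $A_k$. So this last step, as written, has a gap --- though it concerns a discrepancy between the lemma's statement and its proof that the paper itself leaves unaddressed, and none of the downstream applications require the unsquared form.
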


Above we have suggestively written $|\nabla  \delta_E(z)|$ instead of 1 with an eye towards future applications in the radial setting. Before we prove the lemma let's make some remarks: 

\begin{rmk}
The condition in Lemma \ref{l:poincare} differs from Definition \ref{defn:gamma} in two main ways. First, it gives no control on the second or 0th order derivatives of $D_K$. While the USFE itself is control on $\nabla |\nabla D_K|$ (i.e. part of the second derivative) and similar arguments to the ones below give a notion of control on the 0th derivative, we were not able to show that control on all the second derivatives of $D_K$ is necessary for the USFE outside of planes. 

Secondly, Definition \ref{defn:gamma} requires control which is uniform over planes whereas Lemma \ref{l:poincare} shows only non-uniform control is necessary. To get uniform control for each kernel $K$ one would need to build a set $E$ which is flat in the ``worst possible" direction for a Carleson prevalent set of Whitney-type regions.
Whether such a set exists for every kernel is an interesting question, but outside the scope of this article. 
\end{rmk}

\begin{proof}[Proof of Lemma \ref{l:poincare}]
  Let $E \in G(n,d)$. Since $F_K$ satisfies the USFE on $\Omega$, and since $F_K$ and $\delta_E$ are translation invariant with respect to $E$, then necessarily
      \begin{align*}
          \int_{E^\perp} F_K(z)^2 \delta_E(z)^{-n+d} \; d\HD^{n-d}(z) < \infty.
      \end{align*}
      With $A_k = B(0, 2^{k+1}) \setminus B(0, 2^k) \cap E^\perp$, we have that 
      \begin{align*}
          \sum_{k \in \Z} \int_{A_k} F_K(z)^2 \delta_E(z)^{-n + d} \; d\HD^{n-d}(z) & = \sum_{k \in \Z} \int_{A_k} |\nabla | \nabla D_K(z)|^2|^2  \delta_E(z)^{-n + d +2 } \; d\HD^{n-d}(z) < \infty. 
      \end{align*}
      With $r_k = 2^{-k}$ and $c_k = \int_{A_k} |\nabla D_K(z)|^2 \; d\HD^{n-d}(z) / |A_k|$,
      \begin{multline*}
          \int_{A_k} | |\nabla D_K(z)|^2 - c_k|^2 \delta_E(z)^{-n + d} \; d\HD^{n-d}(z)  = \int_{A_1} | | \nabla D_K( r_k z)|^2 - c_k|^2 \delta_E(z)^{-n +d} \; d\HD^{n-d}(z) \\
           \le C(n,d) \int_{A_1} | | \nabla D_K( r_k z)|^2 - c_k|^2  \; d\HD^{n-d}(z)   \le C(n,d) \int_{A_1} |\nabla_z |\nabla D_K(r_k z)|^2 |^2 \; d\HD^{n-d}(z)  \\
           = C(n,d) r_k^2 \int_{A_1} |\nabla | \nabla D_K(r_k z)|^2 |^2 \; d\HD^{n-d}(z)  \\ \le C(n,d) \int_{A_k} |\nabla | \nabla D_K(z)|^2|^2 \delta_E(z)^{-n +d + 2} \; d\HD^{n-d}(z),
      \end{multline*}
      where we have employed the Poincar\'e inequality in the second line. This shows that 
      \begin{align*}
          \sum_{k \in \Z} \int_{A_k} | | \nabla D_K(z)|^2 - c_k|^2 \delta_E(z)^{-n + d} \;d\HD^{n-d}(z) < \infty,
      \end{align*}
as desired. 
\end{proof}

Since radial kernels do not depend on the direction (nor the plane itself), we can strengthen Lemma \ref{l:poincare} in the radial setting:

\begin{cor}\label{c:almostconverse}
If $K$ is a radial, distance-standard kernel so that $D_{K, \HD^d|_E}$ satisfies the USFE for each $E \in G(n,d)$, then necessarily we have that 
\begin{align*}
\sum_{k    \in \Z} \int_{A_k}|  \nabla D_K(z)  -  \nabla  c_k\delta_E(z) |^2 \delta_E(z)^{- n + d} \; d\HD^{n-d}(z) < \infty,
\end{align*}
where $A_k = E^\perp \cap \left( B(0, 2^k) \setminus B(0, 2^{k-1})\right)$.
\end{cor}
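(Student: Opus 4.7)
The plan is to derive this corollary from Lemma \ref{l:poincare} by exploiting the scalar structure that the radial symmetry of $K$ imposes on $D_K$ outside of the plane $E$.

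First, I would observe that when $K$ is radial and $E\in G(n,d)$ is a plane, $R_K(x)$ depends only on $\delta_E(x)$ -- this is seen by integrating in polar coordinates about $P_E(x)$, exactly as in the proof of Theorem \ref{thm:dist_exact_radial}. Consequently $D_K(z) = f(\delta_E(z))$ for some $f \in C^2((0,\infty))$, and therefore $\nabla D_K(z) = f'(\delta_E(z))\,\nabla \delta_E(z)$; in particular $\nabla D_K$ is everywhere a scalar multiple of $\nabla \delta_E$. On $E^\perp$, where $|\nabla \delta_E|=1$, this yields the pointwise identity
\begin{equation*}
|\nabla D_K(z) - c_k \nabla \delta_E(z)|^2 = (f'(\delta_E(z)) - c_k)^2,
\end{equation*}
which reduces the corollary to a scalar oscillation estimate for $u(z) := f'(\delta_E(z))$.

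Next, I would re-run the Poincar\'e-inequality step from the proof of Lemma \ref{l:poincare}, but applied to the scalar function $u$ in place of $|\nabla D_K|^2$. Choosing $c_k := \fint_{A_k} u\,d\HD^{n-d}$ and using that $|\nabla u(z)| = |f''(\delta_E(z))|$ together with $\delta_E \asymp 2^k$ on $A_k$, the Poincar\'e inequality on each annulus $A_k \subset \R^{n-d}\cong E^\perp$ gives, after summing,
\begin{equation*}
\sum_{k\in\Z}\int_{A_k} (u-c_k)^2\,\delta_E^{-n+d}\,d\HD^{n-d} \leq C\int_{E^\perp} |f''(\delta_E(z))|^2\,\delta_E(z)^{-n+d+2}\,d\HD^{n-d}(z).
\end{equation*}
To close the argument, I would invoke the USFE via the formula $F_K(z) = 2\delta_E(z)|f'(\delta_E(z))|\,|f''(\delta_E(z))|$, which follows directly from $|\nabla D_K|^2 = (f'(\delta_E))^2$. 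Assuming the uniform lower bound $|\nabla D_K| = |f'(\delta_E)| \gtrsim 1$, the right-hand side above is controlled by a constant multiple of $\int_{E^\perp} F_K^2 \delta_E^{-n+d}\,d\HD^{n-d}$, which is finite by the USFE hypothesis.

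The main obstacle is the uniform lower bound $|\nabla D_K|\gtrsim 1$; this should be extracted from the explicit radial representation $D_K(r) = r G(r)^{-1/\alpha}$ with $G(r) := c_d\int_0^\infty K(r\sqrt{1+u^2})(1+u^2)^{-(d+\alpha)/2}u^{d-1}\,du$, since the distance-standard bounds force $G\asymp 1$ and $|rG'(r)|\leq C$, so that $D_K'(r) = G^{-1/\alpha}\bigl(1 - rG'(r)/(\alpha G(r))\bigr)$ can in principle be shown to be bounded below. Alternatively, one could appeal to Lemma \ref{l:poincare} directly and use the algebraic identity $(\sqrt{a}-\sqrt{b})^2 = (a-b)^2/(\sqrt{a}+\sqrt{b})^2$, paired with the same lower bound, to convert Lemma \ref{l:poincare}'s estimate on $||\nabla D_K|^2 - \tilde c_k|$ into one on $|f'(\delta_E) - c_k|$, with the sign of $c_k = \pm\sqrt{\tilde c_k}$ chosen according to the sign of $f'$ on each annulus.
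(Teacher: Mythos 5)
Your reduction is the same as the paper's: radial symmetry forces $D_K=f(\delta_E)$ with $\nabla D_K=f'(\delta_E)\,\nabla\delta_E$, and the rest is Lemma \ref{l:poincare}, whose annulus-by-annulus Poincar\'e argument you essentially re-run on the scalar $u=f'(\delta_E)$. The one step you correctly single out as ``the main obstacle'' --- the uniform lower bound $|\nabla D_K|=|f'(\delta_E)|\gtrsim 1$, needed either to pass from $F_K=2\delta_E|f'||f''|$ to control of $\int\delta_E^{2}|f''|^2\delta_E^{-n+d}$, or, in your alternative route, to divide by $\sqrt{a}+\sqrt{b}$ --- is not actually established by the argument you sketch. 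From $G\asymp 1$ and $|rG'(r)|\le C$ one gets that $D_K'(r)=G^{-1/\alpha}\bigl(1-rG'(r)/(\alpha G(r))\bigr)$ is bounded \emph{above}, but nothing in the distance-standard bounds forces $|rG'/(\alpha G)|<1$, so the second factor can vanish or change sign; the comparison $D_K\asymp\delta_E$ constrains $f$, not $f'$. As written, the proposal therefore has a genuine hole at exactly the point you flagged.

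The hole is repairable, but not by the pointwise formula for $D_K'$. On any compact range of scales $2^k\in[\epsilon,1/\epsilon]$ there are only finitely many annuli and $f'$ is bounded, so those terms of the sum are harmless; the lower bound on $|f'|$ is needed only near $0$ and near $\infty$, where it does hold under the USFE hypothesis by Theorem \ref{thm:necforusfe} (which gives $\nabla D_K(\lambda x)\lambda^{0}\to a_{(*)}\nabla\delta_E(x)$ with $a_{(*)}>0$ as $\lambda\to 0,\infty$), or equivalently from the convergence $c_k\to\tilde c\neq 0$ established in its proof. To be fair, the paper's own two-line proof of this corollary elides the very same square-root/lower-bound passage (it cites Lemma \ref{l:poincare}, whose proof controls $\bigl||\nabla D_K|^2-c_k\bigr|$ rather than $\bigl||\nabla D_K|-c_k\bigr|$), so you have put your finger on a real soft spot; but your proposed way of closing it does not work, and the correct patch goes through the asymptotics of $D_K'$ at the two ends rather than through a global pointwise bound.
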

\begin{proof}
Lemma \ref{l:poincare} shows that for each $E$ if $F_K$ satisfies the USFE outside of $E$ we have constants $c_k$ such that \begin{align*}
\sum_{k    \in \Z} \int_{A_k}| | \nabla D_K(z)|^2  - c_k|^2 \delta_E(z)^{- n + d} \; d\HD^{n-d}(z) < \infty
\end{align*}
Since $D_K$ is radial, we know that $\nabla D_K(x)$ points the same direction as $\nabla \delta_E(x)$, and that the $c_k$ do not depend on $E$, giving the desired result. 
\end{proof}

\begin{thm}[A partial converse] \label{thm:necforusfe}
 Suppose $E \in G(n,d)$ and $K$ is a distance-standard kernel such that $F_{K, \HD^d|_E}$ satisfies the USFE. Then necessarily there are constants $a_{(*)} > 0$ so that 
\begin{align*}
   \lim\limits_{|x| \ra (*), \;  x \not \in E} |D_K(x) - a_{(*)}\delta(x)| \delta(x)^{-1} \ra 0.
\end{align*} 
where $(*)$ stands for $0$ or $\infty$. 

Furthermore if $\nabla^m K(x) |x|^m \in L^\infty(\R^n)$ for $0 \le m \le k$ we can ensure, for $0 \le m \le k-1$, that
\begin{align*} 
   \lim\limits_{|x| \ra (*), \;  x \not \in E} |\nabla^m D_K(x) - a_{(*)}\nabla^m \delta_E(x)| \delta_E(x)^{1-m} \ra 0.
\end{align*} 

Finally, if $K$ is radial, then we also have that $K_\lambda \ra c_{(*)}$ in $C^{k-1}_{loc}(\R^n \setminus \{0\})$ as $\lambda  \ra (*)$ for $(*) = 0, \infty$ and some positive constants $c_0, c_\infty$.
\end{thm}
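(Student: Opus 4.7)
The plan is to follow a blowup/compactness paradigm analogous to Theorem \ref{thm:rect_ntl}, exploiting the scale invariance of the USFE when $E$ is flat. First I will introduce $K_\lambda(x) := K(\lambda x)$ and use the scaling relations $D_{K_\lambda}(x) = \lambda^{-1} D_K(\lambda x)$, $|\nabla D_{K_\lambda}|(x) = |\nabla D_K|(\lambda x)$, and $F_{K_\lambda}(x) = F_K(\lambda x)$. The distance-standard bounds make $\{K_\lambda\}_{\lambda > 0}$ precompact in $C^1_{loc}(\R^n\setminus\{0\})$ (in $C^{k-1}_{loc}$ under the higher regularity assumption). Because $E$ is a plane through the origin, $F_K, \delta_E, D_K$ are all translation invariant along $E$, so the USFE is equivalent to $\int_{E^\perp} F_K^2 \delta_E^{-n+d}\,d\HD^{n-d} < \infty$, an integral that is itself invariant under $K \mapsto K_\lambda$.

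Given any sequence $\lambda_j \to (*)$, Arzela-Ascoli yields a subsequence along which $K_{\lambda_j} \to K^\infty$ in $C^1_{loc}(\R^n\setminus\{0\})$, and Lemma \ref{kernellemma} gives $D_{K_{\lambda_j}} \to D_{K^\infty}$ in $C^1_{loc}(\R^n \setminus E)$. The USFE tail $\int_{\{|z| > R\} \cap E^\perp} F_K^2 \delta_E^{-n+d}\, d\HD^{n-d}$ vanishes as $R \to (*)$; combining this with Fatou applied to the pointwise convergence $F_{K_{\lambda_j}} \to F_{K^\infty}$ forces $F_{K^\infty} \equiv 0$ on $\R^n \setminus E$. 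Hence $|\nabla D_{K^\infty}|$ is constant, and by Corollary 3.2 of \cite{DEMMAGIC}, $D_{K^\infty}(x) = c^\infty \delta_E(x)$ for some $c^\infty > 0$. With the additional regularity assumption the same argument runs in $C^{k-1}_{loc}$, which will eventually yield the higher-derivative statement once uniqueness is established.

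The main obstacle will be to show $c^\infty = a_{(*)}$ is independent of the subsequence. I plan to use the sequence of constants $c_k$ from Lemma \ref{l:poincare}: a weighted Poincar\'e argument on overlapping dyadic annuli $A_k \cup A_{k+1}$ yields $|c_k - c_{k+1}|^2 \lesssim \beta_k^2 + \beta_{k+1}^2 \to 0$, where $\beta_k^2 = \int_{A_k \cap E^\perp} F_K^2 \delta_E^{-n+d}$. Each subsequential limit of $c_k$ along $\lambda_j = 2^{k_j}$ is precisely the squared distance-exact constant of the corresponding blowup $K^\infty$. Continuity of $\lambda\mapsto K_\lambda$ makes the $\omega$-limit set of $c(\lambda)$ a compact connected interval of $\R$, and the crux is to combine the scale-to-scale estimate with the Carleson finiteness $\int F_K^2\delta_E^{-n+d} < \infty$ to force this interval to collapse to a single point.

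Once $a_{(*)}$ is unique, the first limit follows from $D_K(\lambda e)/\lambda = D_{K_\lambda}(e) \to D_{K^\infty}(e) = a_{(*)} \delta_E(e)$ for $e \notin E$, together with the translation invariance of $D_K$ along $E$; the analogous argument in $C^{k-1}_{loc}$ gives the higher-derivative statement. Finally, in the radial case every blowup $K^\infty$ inherits radial symmetry, hence is constant by Theorem \ref{thm:dist_exact_radial}; the explicit formula $D_c = (c I)^{-1/\alpha}\delta_E$ for a constant kernel $c$ (with $I = I(n,d,\alpha)$) bijectively relates constants $c$ to their distance-exact constants, so $c^\infty$ determines the limiting constant $c_{(*)}$ uniquely and $K_\lambda \to c_{(*)}$ in $C^{k-1}_{loc}(\R^n\setminus \{0\})$.
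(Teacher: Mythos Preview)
Your blowup/compactness scheme, the bootstrap to higher derivatives, and the treatment of the radial case are exactly how the paper proceeds. Two points deserve attention.

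\textbf{(1) The $F_{K^\infty}\equiv 0$ step.} Asserting ``pointwise convergence $F_{K_{\lambda_j}}\to F_{K^\infty}$'' requires $D_{K_{\lambda_j}}\to D_{K^\infty}$ in $C^2_{loc}$, and hence (via Lemma~\ref{kernellemma}) $K_{\lambda_j}\to K^\infty$ in $C^2_{loc}$. The distance-standard hypothesis only gives uniform bounds on $\nabla^m K_\lambda$ for $m\le 2$, so Arzel\`a--Ascoli yields precompactness in $C^1_{loc}$ but not $C^2_{loc}$; you would need $\nabla^3 K(x)|x|^3\in L^\infty$, which is not assumed in the first assertion of the theorem. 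The paper avoids this entirely: rather than showing $F_{K^\infty}=0$, it works with $f_N(z)=2^N D_K(2^{-N}z)$, extracts a subsequential $C^1_{loc}$ limit $g$, and uses the scaled Lemma~\ref{l:poincare} estimate
\[
\int_{A_k}\bigl||\nabla f_N|^2-c_{k-N}\bigr|^2\,d\HD^{n-d}\longrightarrow 0,
\]
valid for the \emph{full} sequence $N\to\infty$, to deduce that $|\nabla g|^2$ is constant on each $A_k$. This uses only first derivatives of $D_K$ and is the cleaner route under the minimal hypotheses.

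\textbf{(2) Uniqueness of $a_{(*)}$.} Your proposed mechanism --- $|c_k-c_{k+1}|^2\lesssim\beta_k^2+\beta_{k+1}^2$ with $\sum_k\beta_k^2<\infty$, together with connectedness of the $\omega$-limit set --- does not force convergence. In the abstract, a bounded sequence can satisfy $\sum_k|c_k-c_{k+1}|^2<\infty$ and still oscillate between two values (take increments of order $1/|k|$); equivalently, a bounded $h$ on $(0,1)$ with $\int_0^1 t|h'(t)|^2\,dt<\infty$ can fail to have a limit at $0$ if its oscillations take place over super-exponentially separated scales. So ``scale-to-scale estimate plus Carleson finiteness'' is not enough on its own; you need some further structural input. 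The paper takes a different route here: it ties the \emph{fixed} sequence $(c_m)_{m\in\Z}$ directly to the blowups via the full-sequence decay above, valid simultaneously for every $k$, and then combines this with the subsequential $C^1$ convergence $|\nabla f_{N_j}|^2\to|\nabla g|^2$ to read off that $\tilde c=\lim_{k\to-\infty}c_k$ exists and equals $a_0^2$. Once $a_0$ is pinned down at the $m=0$ level, the higher-derivative and radial conclusions follow by the bootstrap you outline (and as the paper does): any subsequential kernel limit $K^\infty$ must satisfy $D_{K^\infty}=a_0\delta_E$, so all subsequential limits of $\nabla^m D_{K_\lambda}$ agree and full convergence follows.
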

\begin{proof}
           
      Define the functions $f_N(x) : E^{\perp} \setminus \{ 0 \} \ra \R$ by $f_N(z) = D_K(2^{-N} z) 2^{N}$. Since $K$ is a distance-standard kernel, we have that 
      \begin{enumerate}[(a)]
          \item $|\nabla f_N(z)| \le C$ for all $z \in (B(0,1) \cap E^{\perp}) \setminus \{0\}$,
          \item $f_N(0) = 0$ for each $N \in \N$,
          \item $|f_N(z)|$ is uniformly bounded on $B(0,1) \cap E^{\perp}$. 
      \end{enumerate}
      In particular, by Arzela-Ascoli, we may choose a subsequence (still labeled $f_N$) such that $f_N \ra f$ uniformly on $B(0,1) \cap E^{\perp}$. Now let $E_k = B(0, 2^k) \setminus B(0, 2^{-k}) \cap E^{\perp}$. Note that for each fixed $k$, we may choose a constant $C$ so that for each $N$ and for any $x \in E_k$,
      \begin{enumerate}[(a)]
          \item $|f_N(z)| \le C \delta_E(z) \le C 2^k$,
          \item $|\nabla f_N(z)| \le C$, 
          \item $|\nabla^2 f_N(z)| \le C \delta_E(z)^{-1} \le C 2^k$.
      \end{enumerate}
This allows us to extract a subsequence of $f_N$ for which $f_N \ra g_k$ in $C^1(E_k)$. By a diagonalization argument, we may then as well assume that $f_N \ra g$ in $C^1_{loc}(E^\perp \setminus \{0\})$ for some function $g \in C^1(E^{\perp} \setminus \{0\})$. 
      
      We now show that $|\nabla g|$ is constant. For each fixed $k$, note that
      \begin{align*}
          \int_{A_k} ||\nabla f_N(z)|^2 - c_{k-N}|^2  \; d\HD^{n-d}(z) & = \int_{A_k} | |\nabla D_K(2^{-N} z)|^2 - c_{k-N}|^2 \; d\HD^{n-d}(z) \\
          & = 2^{N(n-d)} \int_{A_{k - N}} | | \nabla D_K(z)|^2 - c_{k-N}|^2 \; d\HD^{n-d}(z) \\
          & \le C(k, n ,d) \int_{A_{k-N}} | |\nabla D_K(z)|^2 - c_{k-N}|^2 \delta_E(z)^{-n + d} \; d\HD^{n-d}(z).
      \end{align*}
      Since 
      \begin{align*}
          \sum_{k    \in \Z} \int_{A_k}| | \nabla D_K(z)|^2  - c_k|^2 \delta_E(z)^{- n + d} \; d\HD^{n-d}(z) < \infty,
      \end{align*}
      sending $N \ra \infty$, we see that 
      \begin{align*}
          \lim_{N \ra \infty }\int_{A_k} ||\nabla f_N(z)|^2 - c_{k-N}|^2  \; dz  = 0. 
      \end{align*}
      Finally, using that $f_N \ra g$ in $C^1_{loc}(E^{\perp} \setminus \{0\})$, we see that this implies both that $\tilde{c} = \lim_{k \ra -\infty} c_k$ exists, and that $|\nabla g(z)| = \tilde{c}$ on $A_k$. Since $k$ is arbitrary we see that $|\nabla g(z)| = \tilde{c}$ on $E^\perp \setminus \{0\}$. 
      
      Recall now that $ (1/C) \delta_E(z) \le f_N(z) \le C \delta_E(z)$, so that in particular, $(1/C) \delta_E(z) \le g(z) \le C \delta_E(z)$. It follows that, $g(0) =0$, $g(z) > 0$ for each $z \in E^{\perp} \setminus \{0\}$, and $\tilde{c} \ne 0$. Since $g \in C^1(E^\perp \setminus \{0\})$, Section 3 in \cite{DEMMAGIC} implies that necessarily, $g(z) = a_0 \delta_E(z)$ for some constant $a_0 > 0$. 
      
      Now since $f_N \ra g$ uniformly on $B(0,1) \cap E^\perp$, we see that this implies that for each $\epsilon >0$, if $N$ is large enough, then for any $|z| \le 1$, $z \in E^\perp$,
      \begin{align*}
          |f_N(z) - a_0 \delta_E(z)| < \epsilon,
      \end{align*}
      i.e.,
      \begin{align*}
          |D_K(2^{-N} z) 2^N - a_0 \delta_E(z)| & = 2^N |D_K(y) - a_0 \delta_E(y)| < \epsilon
      \end{align*}
      where $y = 2^{-N} z$. It follows that $\lim_{|z| \ra 0}  \delta_E(z)^{-1}|D_K(z) - a_0 \delta_E(z)| = 0$. This proves the claim for $(*) = 0$, but the proof for $(*) = \infty$ is essentially the same.

To prove convergence for the higher derivatives we again restrict ourselves to $(*) = 0$, since the argument for $(*) = \infty$ is almost identical. Let $\lambda_i>0$ be any sequence converging to zero. For $\lambda >0$, define $K_\lambda(x) = K(\lambda x)$. Now define the sequence of functions $g_i(x)  = D_{K_{\lambda_i}}(x)$ for $x \not \in E$. By Arzela-Ascoli, we may assume that up to a subsequence, $K_{\lambda_i} \ra K_\infty$ in $C^{k-1}_{loc}(\R^n \setminus \{0\})$ for some function $K_\infty \in C^{k-1}(\R^n \setminus \{0\})$ which is distance standard. By Lemma \ref{kernellemma}, it follows that $D_{K_i} \ra D_{K_\infty}$ in $C^{k-1}_{loc}(\R^n \setminus E)$. In particular, we have that $D_{K_{\lambda_i}} \ra D_{K_\infty}$ uniformly in $C^{k-1}(E^\perp \cap \overline{ (B(0,2) \setminus B(0,1))})$.

As in the computations following Lemma \ref{kernellemma}, we have that 
\begin{align*}
D_{K_{\lambda_i}}(x) & = D_K(\lambda_i x) / \lambda_i
\end{align*}
and thus
\begin{align*}
\nabla^m D_{K_{\lambda_i}}(x) & = \nabla^m D_K(\lambda_i x) \lambda_i^{m-1}
\end{align*}
for $ 0 \le m \le k-1$.  When $m = 0$, we know by above that the right-hand side necessarily converges to $a_0 \delta_E(x)$ as $i \ra \infty$ uniformly for $x \in E^\perp \cap \overline{ (B(0,2) \setminus B(0,1))}$. It follows that $D_{K_\infty}(x) \equiv a_0 \delta_E(x)$ on $E^\perp \cap \overline{ (B(0,2) \setminus B(0,1))}$. Thus we have that
\begin{align*}
\nabla^m D_K(\lambda_i x) \lambda_i^{m-1} \ra \nabla^m a_0 \delta(x)
\end{align*}
uniformly in $E^\perp \cap \overline{ (B(0,2) \setminus B(0,1))}$ as $i \ra \infty$. Since $\lambda_i$ was arbitrary, then necessarily we have 
\begin{align*}
\lim_{\lambda \da 0} \sup_{x \in E^\perp \cap \overline{ (B(0,2) \setminus B(0,1))}} | \nabla^m D_K(\lambda x) \lambda^{m-1} - \nabla^m a_0 \delta_E(x)| = 0.
\end{align*}
Unraveling the definition of the limit above gives the desired result, since $D_K$ and $\delta_E$ are translation invariant with respect to $E$. 

To prove the last claim, remark that we have showed that for any sequence $\lambda_i \ra (*)$, there is a subsequence $\lambda_{i_j}$ so that $K_{\lambda_{i_j}} \ra K_\infty$ in $C^{k-1}_{loc}(\R^n \setminus \{0\})$ and $K_\infty$ is such that $D_{K_\infty} \equiv a_{(*)} \delta_E$. Since $K$ is radial, then so is $K_\infty$. But then Theorem \ref{thm:dist_exact_radial} implies that $K_\infty$ is a positive constant (and the constant is determined by $a_{(*)}$). Hence the claim is proved.
\end{proof}

\subsection{USFE imply uniform rectifiability}
\label{sec:usfeimpliesur}

The goal of this subsection is to understand for which distance-standard kernels $K$, the USFE for $D_{K, \mu}$ implies that $\mu$ is uniformly rectifiable. As with non-tangential limits this will be true for essentially all distance-standard kernels. We follow the techniques from \cite{DEMMAGIC}, but first let us introduce some notation.

\begin{defn}
If for each $\epsilon >0$, the set 
\begin{align*}
    Z(\epsilon) = \{ x \in \Omega \; ; \; F_K(x) > \epsilon \}
\end{align*} is a Carleson set, then we say that $F_K$ satisfies the weak USFE. 
\end{defn}

By Chebyshev's inequality, it is not hard to see that the USFE imply weak USFE. Instead of proving uniform rectifiability directly, we will show 
that the weak USFE imply that $E$ satisfies the Bilateral Weak Geometric Lemma (BWGL). Along with Ahlfors regularity, this condition characterizes uniform rectifiability \cite{DSUR}. 

In what follows, there is one additional assumption needed on $K$ to ensure the blow-ups of $F_K$ are well-behaved. Namely, in addition to assuming that $K$ is distance-standard, we assume throughout the entirety of this section that $\nabla^3 K(x)|x|^3$ is bounded in order to have control on the blowups of $F_K$.

\begin{lemma}
Let $n \ge 2$, let $d < n$, and assume $E \subset \R^n$ supports a $d$-Ahlfors regular measure $\mu$. Also, assume that $K$ is a distance-standard kernel with the additional assumption that $K \in C^{3} (\R^n \setminus \{0\})$ with $\|\nabla^3K(x) |x|^3\|_\infty < \infty$. If $M \ge 1$ and $x \in \Omega$, define 
\begin{align*}
    W(x) = W_M(x) = \{ y \in \Omega \cap B(x, M\delta(x)) \; ; \; \delta(y) \ge M^{-1}\delta(x) \}.
\end{align*}
Define the bad sets $\mathcal{B}(\eta) = \mathcal{B}_M(\eta)$ by
\begin{align*}
    \mathcal{B}_M(\eta) & = \{ x \in \Omega \; ; \; F_K(y) 
    \ge \eta \text{ for some } y \in W_M(x)\}.
\end{align*}
If $Z(\epsilon)$ is a Carleson set, then for each $M$ sufficiently large, $\mathcal{B}_M(3 \epsilon)$ is a Carleson set as well. \label{whitneylemma}
\end{lemma}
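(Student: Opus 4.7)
The plan is to use the extra smoothness assumption on $K$ to derive the Lipschitz-type bound $|\nabla F_K(x)| \lesssim \delta_E(x)^{-1}$ throughout $\Omega$, and then to run a Whitney-packing/Fubini argument that transfers the Carleson condition from $Z(\epsilon)$ to its Whitney thickening $\mathcal{B}_M(3\epsilon)$.

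For the gradient bound, since $K$ is distance-standard with $\|\nabla^3K(x)|x|^3\|_\infty<\infty$, the same dyadic-shell estimates that produce $|\nabla^j R_{K,\mu}(x)| \lesssim \delta_E(x)^{-j-\alpha}$ for $j = 0,1,2$ extend to $j = 3$; the chain rule applied to $D_K = R_K^{-1/\alpha}$ then yields $|\nabla^j D_{K,\mu}(x)| \lesssim \delta_E(x)^{1-j}$ for $0\le j\le 3$. Differentiating $F_K(x) = \delta_E(x)\,|\nabla|\nabla D_K(x)|^2|$ gives $|\nabla F_K(x)| \le C_1\delta_E(x)^{-1}$ for a constant $C_1$ depending only on $n$, $d$, $\alpha$, and the distance-standard constant of $K$. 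In particular, if $F_K(y)\ge 3\epsilon$ and $r := c(\epsilon)\delta_E(y)$ where $c(\epsilon) = \min\{\epsilon/C_1,\,1/4\}$, then for every $y'\in B(y,r)$ we have $F_K(y') \ge 3\epsilon - C_1 r/\delta_E(y) \ge 2\epsilon$, and hence $B(y,c(\epsilon)\delta_E(y)) \subset Z(\epsilon)$.

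For the packing, fix $x\in\mathcal{B}_M(3\epsilon)$ and a witness $y=y(x)\in W_M(x)$ with $F_K(y)\ge 3\epsilon$. Since $\delta_E(y)\ge \delta_E(x)/M$, the shrunken ball $B(y,c(\epsilon)\delta_E(x)/M)$ lies inside $Z(\epsilon)$; the inclusions $|z-x|\le |z-y|+|y-x|\le (M+1)\delta_E(x)$ and $\delta_E(z)\ge \delta_E(y) - c(\epsilon)\delta_E(x)/M \ge \delta_E(x)/(2M)$ show that this ball is also contained in $W_{2M}(x)$. Therefore $|W_{2M}(x)\cap Z(\epsilon)| \gtrsim (c(\epsilon)/M)^n \delta_E(x)^n$, giving the pointwise domination
\begin{equation*}
\chi_{\mathcal{B}_M(3\epsilon)}(x) \;\le\; C(\epsilon,M)\,\delta_E(x)^{-n}\,|W_{2M}(x)\cap Z(\epsilon)|.
\end{equation*}
Fixing $Q\in E$ and $R>0$, Fubini combined with the fact that, for each fixed $y$, the set $\{x : y\in W_{2M}(x)\}$ is contained in $B(y,(2M)^2\delta_E(y))$ and satisfies $\delta_E(x)\simeq \delta_E(y)$ (with constants depending on $M$) yields
\begin{equation*}
\int_{B(Q,R)\cap\mathcal{B}_M(3\epsilon)} \delta_E(x)^{-n+d}\,dx \;\le\; C(\epsilon,M) \int_{Z(\epsilon)\cap B(Q,(M+1)R)} \delta_E(y)^{-n+d}\,dy \;\le\; C'(\epsilon,M)\,R^d,
\end{equation*}
where the last step uses the Carleson hypothesis on $Z(\epsilon)$ along with $|y-Q|\le |y-x|+|x-Q|\le (M+1)R$.

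The main technical point is the verification of $|\nabla F_K|\lesssim \delta_E^{-1}$, which is precisely why the hypothesis $\|\nabla^3K(x)|x|^3\|_\infty<\infty$ is imposed. Once this Lipschitz estimate is in hand, the remainder is a routine Whitney-region packing argument; in fact it works for every $M\ge 1$, with constants that deteriorate as $M\to\infty$ and as $\epsilon\to 0$.
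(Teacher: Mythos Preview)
Your argument is correct, though it proceeds differently from the paper. The paper does not compute a pointwise Lipschitz bound on $F_K$; instead it establishes the qualitative scale-invariant continuity
\[
|F_K(x)-F_K(x')|\le\epsilon\quad\text{whenever }|x-x'|\le 4\tau\delta_E(x)
\]
via a blow-up/compactness argument (rescale, pass to subsequences $K_i\to K_\infty$, $\mu_i\to\mu_\infty$, invoke Lemma~\ref{kernellemma}), and then defers the packing step entirely to \cite{DEMMAGIC}. Your route is more direct and quantitative: you extract the Lipschitz constant from the $C^3$ hypothesis by hand and then carry out the Whitney/Fubini argument explicitly. This has the advantage of being self-contained and of making the dependence of the Carleson norm on $\epsilon$ and $M$ transparent; the paper's compactness argument gives no such quantitative information but avoids computing derivatives of $F_K$.

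Two small points to clean up. First, $F_K(x)=\delta_E(x)\,|\nabla|\nabla D_K|^2|$ is not classically $C^1$ (both $\delta_E$ and the outer absolute value fail to be differentiable), so the statement ``$|\nabla F_K(x)|\le C_1\delta_E(x)^{-1}$'' should be phrased as a local Lipschitz estimate; your subsequent use of it only needs $|F_K(y')-F_K(y)|\le C_1|y'-y|/\delta_E(y)$ on $B(y,\tfrac14\delta_E(y))$, which follows immediately from $|\nabla^jD_K|\lesssim\delta_E^{1-j}$ for $0\le j\le 3$ and the $1$-Lipschitz property of $\delta_E$. Second, in the final display the enlargement factor should be $(2M+1)R$ rather than $(M+1)R$, since you passed to $W_{2M}$ in the pointwise domination; this is immaterial for the conclusion.
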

\begin{proof}
In \cite{DEMMAGIC} this lemma is proven for $F_1$. But the only property of $F$ used there is the following continuity assumption: for any $\epsilon >0$, there is a $\tau > 0$ small enough so that 
\begin{align}
    |F(x)- F(x')| \le \epsilon \text{ whenever } x,x' \in \Omega \text{ with } |x - x'| \le 4 \tau \delta(x). \label{scaleuc}
\end{align}
It thus suffices to check this condition, which we do by contradiction.

Suppose that there is an $\epsilon_0>0$ and a sequence $\tau_i \da 0$ so that for each $i$, one can find points $x_i, y_i \in \Omega$ so that $|x_i - y_i| \le 4 \tau_i \min \{\delta(x_i), \delta(y_i) \}$, but $|F_{K, \mu}(x_i) - F_{K, \mu}(y_i)| \ge \epsilon_0$. Let $Q_i \in E$ be such that $|x_i - Q_i| = \delta(x_i) \equiv R_i$, let $X_i = (x_i - Q_i)/R_i$, and let $Y_i = (y_i - Q_i)/ R_i$. Define $E_i = (E -Q_i)/ R_i$, and $\mu_i$ as in the remarks following Lemma \ref{kernellemma}. Up to a subsequence, we may as well assume that $\mu_i \rightharpoonup \mu_\infty$ and $E_i \ra E_\infty$ locally in the Hausdorff metric. Since $|X_i| = 1$ for each $i$, we may also assume $X_i \ra X_\infty$ for some $|X_\infty| = 1$.

Next, we set $K_i(x) = K(R_i x)$. Remark that since $K$ is distance-standard, we know that $K_i, \nabla K_i$ are equicontinuous and uniformly bounded on compact subsets of $\R^n \setminus \{0\}$. Since $\nabla^3 K$ exists and $\| \nabla^3K(x)|x|^3\|_\infty < \infty$, it follows that $\nabla^2 K_i$ are also equicontinuous and uniformly bounded on compact subsets of $\R^n\setminus \{0\}$. By Arzela-Ascoli, we may assume up to a subsequence that $K_i \ra K_\infty$ in $C^2_{loc}(\R^n \setminus \{0\})$ for some distance-standard kernel $K_\infty \in C^{2} (\R^n \setminus \{0\})$. Using \eqref{dkscale} we have  
\begin{align*}
    F_{K_i, \mu_i}(X_i)  = F_{K, \mu} (x_i) \quad\mbox{and} \quad 
    F_{K_i, \mu_i}(Y_i)  = F_{K, \mu} (y_i).
\end{align*}
In view of Lemma \ref{kernellemma}, we know that $D_{K_i, \mu_i} \ra D_{K_\infty, \mu_\infty}$ in $C^2_{loc}(\R^n \setminus \{0\})$, and thus $F_{K_i, \mu_i} \ra F_{K_\infty, \mu_\infty}$ uniformly on compact subsets of $\R^n \setminus \{0\}$.  Recall now that 
\begin{align*}
    \epsilon_0 & \le |F_{K, \mu}(x_i) - F_{K, \mu}(y_i)|   = |F_{K_i, \mu_i} (X_i) - F_{K_i, \mu_i}(Y_i) |.
\end{align*}
Since $|X_i - Y_i| = R_i^{-1}|x_i - y_i| \le 4 \tau_i$, and $X_i \ra X_\infty$, we have that $Y_i \ra X_\infty$ as well. Since $F_{K_i, \mu_i} \ra F_{K_\infty, \mu_\infty}$ uniformly on compact subsets of $\R^n \setminus \{0\}$, we see that 
\begin{align*}
    \limsup_{i \ra \infty}|F_{K_i, \mu_i}(X_i) - F_{K_i, \mu_i}(Y_i)| = 0,
\end{align*}
a contradiction. It follows that the function $F_K$ satisfies (\ref{scaleuc}), and so one may argue exactly as in \cite{DEMMAGIC} to prove the lemma.
\end{proof}

We may now follow the blowup argument to obtain the main lemma in this section.
\begin{lemma}
    Assume $K$ is as in Lemma \ref{whitneylemma}. For each choice of $0 < d < n$, $\alpha >0$, an Ahlfors regularity constant $C_0$, and constants $\eta >0$ and $N \ge 1$, we can find $M \ge 1$ and $\epsilon >0$ such that if $\mu$ is Ahlfors regular (of dimension $d$, constant $C_0$, and support $E \subset \R^n$), and if $x \in \Omega \setminus \mathcal{B}_M(3 \epsilon)$, then $d$ is an integer and there is a $d$-plane $P$ such that $d_{x, N\delta(x)}(E, P) \le \eta$. 
\end{lemma}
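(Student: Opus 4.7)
The plan is to argue by contradiction using a blow-up/compactness argument analogous to the one in \cite{DEMMAGIC}, combined with the convergence properties established in Lemma~\ref{kernellemma} and the continuity-under-rescaling obtained in the proof of Lemma~\ref{whitneylemma}.

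Suppose the conclusion fails: then there exist $\eta_0 > 0$, $N_0 \geq 1$, sequences $M_i \uparrow \infty$, $\epsilon_i \downarrow 0$, Ahlfors regular measures $\mu_i$ with constant $C_0$ and supports $E_i$, and points $x_i \in \Omega_i \setminus \mathcal{B}_{M_i}(3\epsilon_i)$ such that
\begin{equation*}
d_{x_i, N_0 \delta_i(x_i)}(E_i, P) > \eta_0 \quad \text{for every $d$-plane } P,
\end{equation*}
where $\delta_i = \delta_{E_i}$. Choose $Q_i \in E_i$ with $|x_i - Q_i| = \delta_i(x_i) =: r_i$, and perform the standard translation/dilation: set $\tilde{\mu}_i(A) = r_i^{-d}\mu_i(r_i A + Q_i)$, with support $\tilde{E}_i = (E_i - Q_i)/r_i$, let $X_i = (x_i - Q_i)/r_i$ (so $|X_i| = 1 = \delta_{\tilde{E}_i}(X_i)$), and define the rescaled kernels $K_i(x) = K(r_i x)$. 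The $\tilde{\mu}_i$ are uniformly $d$-Ahlfors regular, and by the distance-standard hypothesis together with $\|\nabla^3 K(x)|x|^3\|_\infty < \infty$, the $K_i$ are uniformly bounded in $C^2_{\mathrm{loc}}(\R^n \setminus \{0\})$.

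Passing to a subsequence, Arzel\`a--Ascoli and standard compactness yield $\tilde{\mu}_i \rightharpoonup \mu_\infty$, $\tilde{E}_i \to E_\infty$ in the local Hausdorff sense, $X_i \to X_\infty$ with $|X_\infty| = 1$, and $K_i \to K_\infty$ in $C^2_{\mathrm{loc}}(\R^n \setminus \{0\})$ for some distance-standard $K_\infty$. By Lemma~\ref{kernellemma}, $D_{K_i, \tilde{\mu}_i} \to D_{K_\infty, \mu_\infty}$ and $F_{K_i, \tilde{\mu}_i} \to F_{K_\infty, \mu_\infty}$ in $C_{\mathrm{loc}}(\R^n \setminus E_\infty)$. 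The scaling identity \eqref{dkscale} (and its analog for $F_K$) shows that $x_i \notin \mathcal{B}_{M_i}(3\epsilon_i)$ translates to the assertion $F_{K_i, \tilde{\mu}_i}(Y) < 3\epsilon_i$ for every $Y \in W_{M_i}(X_i)$. Since $M_i \uparrow \infty$, for any fixed compact $\mathcal{K} \subset \R^n \setminus E_\infty$ and any $Y \in \mathcal{K}$, we have $Y \in W_{M_i}(X_i)$ for all $i$ sufficiently large; passing to the limit yields $F_{K_\infty, \mu_\infty} \equiv 0$ on $\R^n \setminus E_\infty$.

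Since $F_{K_\infty, \mu_\infty} = \delta_{E_\infty}(x)|\nabla |\nabla D_{K_\infty,\mu_\infty}|^2| \equiv 0$, $|\nabla D_{K_\infty, \mu_\infty}|$ is locally constant on $\R^n \setminus E_\infty$, and since $D_{K_\infty, \mu_\infty} \simeq \delta_{E_\infty}$ (with the bound inherited uniformly from the $C_0$-Ahlfors regularity and the distance-standard constants), this constant is strictly positive. Invoking Corollary~3.2 of \cite{DEMMAGIC} (exactly as in the proof of Theorem~\ref{thm:ntl_rect}), we conclude that $d$ must be an integer, $E_\infty$ is a $d$-plane, and $\mu_\infty$ is a constant multiple of $\mathcal{H}^d|_{E_\infty}$. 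But convergence of $\tilde{E}_i$ to $E_\infty$ locally in Hausdorff distance implies $d_{X_i, N_0}(\tilde E_i, E_\infty) \to 0$, which by scaling is the same as $d_{x_i, N_0 \delta_i(x_i)}(E_i, E_\infty) \to 0$; this contradicts the standing hypothesis and completes the argument. The only subtle point is ensuring the blow-up $F_{K_\infty, \mu_\infty}$ truly vanishes on all of $\R^n \setminus E_\infty$, which requires both the monotone exhaustion of $\R^n \setminus E_\infty$ by the Whitney regions $W_{M_i}(X_i)$ and the uniform $C^2_{\mathrm{loc}}$ control on $K_i$ afforded by the $C^3$ assumption on $K$ — everything else is a routine extraction of subsequences.
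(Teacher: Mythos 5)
Your proposal is correct and follows essentially the same route as the paper's proof: the same contradiction setup with $M_i\uparrow\infty$, $\epsilon_i\downarrow 0$, the same blow-up of $\mu_i$, $E_i$, $x_i$ and rescaling of $K$, the same use of Lemma \ref{kernellemma} and the exhaustion of $\R^n\setminus E_\infty$ by the Whitney regions $W_{M_i}(X_i)$ to get $F_{K_\infty,\mu_\infty}\equiv 0$, and the same appeal to Corollary 3.2 of \cite{DEMMAGIC} to conclude $E_\infty$ is a $d$-plane, contradicting the Hausdorff convergence. No substantive differences.
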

\begin{proof}
Again, following \cite{DEMMAGIC}, we argue by contradiction. That is, assume that there is such a choice of $0 < d < n$, $\alpha >0$, Ahlfors regular constant $C_0$ and parameters $\eta >0$ and $N \ge 1$ so that for each $M_i = 2^i$ and $\epsilon_i = 2^{-i}$, there is a $d$-Ahlfors regular measure $\mu_i$ (with constant at most $C_0$) with support $E_i$ and point $x_i \in \Omega_i \setminus \mathcal{B}_{M_i}(3\epsilon_i)$ that violate the hypotheses of the lemma. We now proceed with a blow-up argument as in Lemma \ref{whitneylemma}. 

Let $Q_i$ be a point that attains $\delta_{E_i}(x_i)$, and define $R_i \equiv \delta_{E_i}(x_i)$ with $X_i  \equiv (x_i - Q_i)/R_i$. Arguing as in the previous lemma, define $\tilde{E_i} \equiv (E_i - Q_i)/R_i$, with supporting $d$-Ahlfors measure $\tilde{\mu}_i(A) \equiv \mu_i(R_i A + Q)/R_i^d$ which has Ahlfors regularity constant bounded by some constant time $C_0$. Without loss of generality, we may assume that $\tilde{\mu_i} \rightharpoonup \mu_\infty$, and $\tilde{E}_i \ra E_\infty$ locally in the Hausdorff metric. For the rescaled kernels $K_i(x) \equiv K(R_i x)$, we have up to a subsequence that $K_i \ra K_\infty$ in $C^2_{loc}(\R^n \setminus \{0\})$. By Lemma \ref{kernellemma}, we thus have that $F_{K_i, \tilde{\mu}_i} \ra F_{K_\infty, \mu_\infty}$ uniformly on compact subsets of $\Omega_\infty$. 
 
 Recall that $x_i \not \in \mathcal{B}_{M_i}(3 \epsilon_i)$, and thus for each $i$,
 \begin{align*}
     F_{K, \mu_i}(y) \le  3(2^{-i}) \text{ for all } y \in \Omega_i \cap B(x_i, 2^i \delta_{E_i}(x_i)) \text{ with } \delta_{E_i}(y) \ge 2^{-i} \delta_{E_i}(x_i).
 \end{align*}
 Computing as in the previous lemma, we have for $ y \in \Omega_\infty$ and $i$ large enough so that $y \in \tilde{\Omega}_i,$
 \begin{align*}
     F_{K_i, \tilde{\mu}_i} ( y) = F_{K, \mu_i}(R_i y + Q_i).
 \end{align*}
 Recalling that $R_i = \delta_{E_i}(x_i)$, we see that for $i$ sufficiently large, $R_i y + Q_i \in B(x_i, 2^i \delta_{E_i}(x_i))$. Moreover, since $\delta_{E_i}(R_i y + Q) = R_i \delta_{\tilde{E}_i}(y)$, we have again for $i$ sufficiently large that $\delta_{E_i}(R_i y + Q_i) \ge 2^{-i} \delta_{E_i}(x_i)$. In particular, for all $i$ sufficiently large, one has that 
 \begin{align*}
     F_{K_i, \tilde{\mu}_i} (y) & = F_{K, \mu_i}(R_i y + Q_i)  \le 3 (2^{-i}). 
 \end{align*}
 Letting $i \ra \infty$, we see that $F_{K_\infty, \mu_\infty} (y)= 0$ for each $y \in \Omega_\infty$. In particular, we have that $|\nabla D_{K_\infty, \mu_\infty}|$ is constant on each connected component of $\Omega_\infty$. Since $D_{K_\infty, \mu_\infty}$ is comparable to $\delta_{E_\infty}$, this constant must be nonzero. \cite[Corollary 3.2]{DEMMAGIC} implies that $d$ is an integer and $E_\infty$ is a $d$-plane.
 
 We have thus obtained that $E_i$ converges, in the Hausdorff distance sense, to $E_\infty$ which is a $d$-plane. Hence, for $i$ sufficiently large, we know that $d_{0, N}(E_i, E_\infty) \le \eta$, which contradicts our starting assumption. The lemma is thus proven. 
\end{proof}

With these lemmata in hand, proof that the (weak) USFE implies uniform rectifiability now proceeds precisely as in the proof of \cite[Theorem 4.1]{DEMMAGIC}. We restate the theorem: 

\begin{thm}
Let $K$ be a $C^3(\R^n \setminus \{0\})$ distance-standard kernel such that $\|\nabla^3 K(x)|x|^3\|_{\infty} < \infty $. Let $n \ge 1$ be an integer, and let $0 < d < n$ be given. Let $\mu$ be a $d$-Ahlfors regular measure with support $E$, and let $\alpha >0$ be given. If for each $\epsilon >0$, the set $Z(\epsilon)$ is a Carleson set, then $d$ is an integer and $E$ is uniformly rectifiable. \label{thm:weakusfe_ur}
\end{thm}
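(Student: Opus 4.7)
The plan is to deduce Theorem \ref{thm:weakusfe_ur} as a consequence of the two preceding lemmas (Lemma \ref{whitneylemma} and the unnumbered blow-up lemma immediately following it), together with the Bilateral Weak Geometric Lemma (BWGL) characterization of uniform rectifiability due to David and Semmes. Concretely, the strategy is to show that the sets of ``bad Whitney scales'' (where $E$ is not close to a $d$-plane at the given location and scale) are Carleson sets, and then invoke the fact that an Ahlfors regular set satisfying BWGL must be uniformly rectifiable.

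First, I would fix the BWGL parameters $\eta>0$ and $N\ge 1$, and apply the blow-up lemma to produce the corresponding constants $M\ge 1$ and $\epsilon>0$. By hypothesis, the set $Z(\epsilon)=\{x\in\Omega:F_K(x)>\epsilon\}$ is a Carleson set, so Lemma \ref{whitneylemma} applies (its hypothesis $K\in C^3$ with $\|\nabla^3 K(x)|x|^3\|_\infty<\infty$ is precisely the standing assumption of Theorem \ref{thm:weakusfe_ur}) and yields that $\mathcal B_M(3\epsilon)$ is also a Carleson set. For every $x\in\Omega\setminus\mathcal B_M(3\epsilon)$, the blow-up lemma produces a $d$-plane $P=P(x)$ with $d_{x,N\delta_E(x)}(E,P)\le\eta$. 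Hence the ``BWGL bad set''
\[
\mathcal G(\eta,N):=\{x\in\Omega:\text{no $d$-plane $P$ satisfies } d_{x,N\delta_E(x)}(E,P)\le\eta\}
\]
is contained in $\mathcal B_M(3\epsilon)$, and therefore is a Carleson set. Since $\eta,N$ were arbitrary, this is exactly the BWGL for $E$.

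Since $\mu$ is $d$-Ahlfors regular and $E$ satisfies BWGL, the David--Semmes characterization (Theorem 2.4 and the discussion in Part I of \cite{DSUR}) implies that $E$ is $d$-uniformly rectifiable, which in particular requires $d$ to be an integer. Alternatively, the conclusion that $d$ must be an integer is already encoded in the blow-up lemma: any blow-up limit $E_\infty$ is forced to be a $d$-plane, which is only possible when $d\in\mathbb N$.

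The main obstacle, conceptually, is the verification that $F_K$ behaves well under blow-ups and satisfies the scale-invariant uniform continuity estimate \eqref{scaleuc} needed to upgrade $Z(\epsilon)$-Carleson to $\mathcal B_M$-Carleson; that work is already done in Lemma \ref{whitneylemma} using the extra hypothesis on $\nabla^3 K$, which ensures that $D_{K_i,\mu_i}\to D_{K_\infty,\mu_\infty}$ in $C^2_{\mathrm{loc}}$ via Lemma \ref{kernellemma}. Once those two lemmas are in hand, the present theorem is a formal consequence, and the remaining step is merely the extraction of the BWGL and its known equivalence (for Ahlfors regular sets) to uniform rectifiability, exactly as in the proof of Theorem 4.1 of \cite{DEMMAGIC}.
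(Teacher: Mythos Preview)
Your proposal is correct and follows essentially the same approach as the paper: the paper does not give an independent proof but simply states that, with Lemma \ref{whitneylemma} and the subsequent blow-up lemma in hand, the argument ``proceeds precisely as in the proof of \cite[Theorem 4.1]{DEMMAGIC},'' which is exactly the BWGL route you outline. Your unpacking of that reference (containment of the BWGL-bad set in $\mathcal B_M(3\epsilon)$ and the David--Semmes characterization) is faithful to the intended proof.
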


\appendix

\section{Proofs of Theorems \ref{thm:dist_exact_ur_usfe} and \ref{thm:dist_exact_ntl_rect}}\label{appendix}

\subsection{Proof of Theorem \ref{thm:dist_exact_ur_usfe}}\label{proof:dist_exact_ur_usfe}
\begin{proof}
One direction of the statement is proved by Theorem \ref{thm:ntl_rect}, since the theorem is stated and proved for general distance-standard kernels. 

For the other direction, we proceed exactly as in the proof of Theorem \ref{thm:rect_ntl}. Using the notation as in this proof, the key difference is that up to a subsequence, we have that $K_{R_i} = K(R_i \, \cdot)$ converges in $C^1_{loc}(\R^n \setminus \{0\})$ to a kernel $K_\infty$ that is in $C^1(\R^n \setminus \{0\}) \cap L^\infty(\R^n)$ so that $\nabla K(x) |x| \in L^\infty(\R^n)$. Since $K_{R_i}$ is $(d, \alpha)$-distance-exact for each $R_i$, using Lemma \ref{kernellemma}, we see that $K_\infty$ is distance-exact. In particular, the crux of the argument, that $|\nabla D_{K_\infty, \mu_\infty}|$ is a positive constant outside $\Omega_\infty$, is still true, and the remainder of the proof holds true with the kernel $c_\infty$ replaced with $K_\infty$. 
\end{proof}

\subsection{Proof of Theorem \ref{thm:dist_exact_ntl_rect}} \label{proof:dist_exact_ntl_rect}
\begin{proof}
As above, one direction is proved by Theorem \ref{thm:weakusfe_ur} and the preceding lemmas, since the statements and proofs are applicable to general distance-standard kernels $K$ satisfying $\nabla^3K(x) |x|^3 \in L^\infty(\R^n)$. 

As for the other direction, this follows from Theorem \ref{T1} since $(d,\alpha)$-distance exact kernels (obviously) satisfy the uniformly good for distances condition, since $\gamma_{K, \lambda, \alpha} \equiv 0$. 
\end{proof}

\section{Computations for radially-invariant distance exact kernels}\label{appendix:homog_ker}

Here we provide justification to the claim mentioned in Section \ref{sec:class} that depending on the choice of parameters $n,d$ and $\alpha$, examples of continuous 0-homogeneous $(d,\alpha)$-distance exact kernels may exist or may be shown to not exist. Throughout the rest of this section, we will assume $K \in C(\R^n \setminus \{0\})$ is 0-homogeneous and abuse notation and use $K$ to refer to both the kernel on all of $\mathbb R^n \setminus \{0\}$ and to the kernel's restriction to $\mathbb S^{n-1}$ (which, by homogeneity, completely determines the function).

For a $d$-plane $E \subset \R^n$, let $P_E: \R^n \ra E$ be the orthogonal projection onto $E$. In addition, for $x \not \in E$, we define the half $d$-arc induced by $E$ and $x$ to be the subset $H^d(E, x) \subset \sphere^{n-1}$ defined by 
\begin{align*}
      H^d(E, x) & = \{ y/|y| \; : \; y \in E - x \}. 
\end{align*}
Geometrically, $H^d(E, x)$ is where the vectors $y-x$ for $y \in E$ intersect $\sphere^{n-1}$.

\begin{lemma}[A useful change of variables] Let $n,d \in \N$ with $1 \le d < n$ and let $\alpha >0$. Let $E \in G(n,d)$ and suppose $K \in L^\infty(\sphere^{n-1})$. If $\delta_E(x_0) = 1$, then
\begin{align*}
    \int_{E} \dfrac{K((x_0-y)/|x_0-y|)}{|x_0-y|^{d + \alpha}} \; d \HD^d  (y)& = \int_{H^d(E, x_0)} K(-w) (w \cdot w_0)^{\alpha -1 }\; d \HD^d(w)
\end{align*}
where $H^d(E, x_0) \subset \sphere^{n-1}$ is the half $d$-arc induced by $E$ and $x_0$ as above, and 
$$w_0 = w_0(x_0) = (P_E(x_0) -x_0)/|P_E (x_0) - x_0|.$$   \label{cov}
\end{lemma}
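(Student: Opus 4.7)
The identity is essentially a change-of-variables computation: pull back the integral on the affine space $E$ to the half-arc $H^d(E,x_0)\subset \mathbb{S}^{n-1}$ via the radial projection $y\mapsto (y-x_0)/|y-x_0|$. To set this up, first parametrize $E$ by writing $y = P_E(x_0) + v$, where $v$ ranges over $E$ viewed as a linear subspace. Then $y-x_0 = w_0 + v$ with $|w_0|=1$, $w_0\perp v$, and $d\HD^d(y) = d\mathcal{L}^d(v)$ under this parametrization.

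Next, make the substitution $w := (y-x_0)/|y-x_0| = \lambda(w_0+v)$, where $\lambda := (1+|v|^2)^{-1/2}$. This is a smooth diffeomorphism from $E \cong \R^d$ onto $H^d(E,x_0)$, and since $w_0\cdot v =0$ and $|w_0|=1$, two key identities follow at once:
\begin{equation*}
|y-x_0| = \sqrt{1+|v|^2} = \lambda^{-1}, \qquad w\cdot w_0 = \lambda = \frac{1}{|y-x_0|}.
\end{equation*}

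The one step requiring care is the Jacobian of $v\mapsto w$ as a parametrization of a $d$-surface in $\R^n$. Using an orthonormal basis $\{e_1,\dots,e_d\}$ of $E$, a short calculation gives $\partial_{i} w = \lambda(e_i - \lambda v_i w)$, and since $e_i\cdot w = \lambda v_i$, the Gram matrix reduces to
\begin{equation*}
G_{ij} = \partial_i w \cdot \partial_j w = \lambda^2\bigl(\delta_{ij} - \lambda^2 v_i v_j\bigr).
\end{equation*}
The matrix determinant lemma then gives $\det G = \lambda^{2d}(1-\lambda^2|v|^2) = \lambda^{2d+2}$, so that the Jacobian is $J(v) = \lambda^{d+1} = (w\cdot w_0)^{d+1}$. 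I expect this is the only nontrivial computational step.

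Putting the pieces together, the integrand on the left becomes
\begin{equation*}
\frac{K((x_0-y)/|x_0-y|)}{|x_0-y|^{d+\alpha}} = K(-w)\,(w\cdot w_0)^{d+\alpha},
\end{equation*}
while $d\mathcal{L}^d(v) = J(v)^{-1}\,d\HD^d(w) = (w\cdot w_0)^{-(d+1)}\,d\HD^d(w)$. Multiplying, the exponents collapse to $(w\cdot w_0)^{\alpha-1}$ and the claimed formula drops out; the sign flip between $(x_0-y)$ and $(y-x_0)$ is absorbed into the $K(-w)$ on the right. For $\alpha \in (0,1)$ the factor $(w\cdot w_0)^{\alpha-1}$ is singular on $\partial H^d(E,x_0)$, but the left-hand side is finite for $K\in L^\infty$ by a standard dyadic estimate on $E$, so the change of variables is justified by restricting to $|v|\le R$ and sending $R\to\infty$ via monotone convergence.
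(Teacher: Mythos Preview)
Your proof is correct and follows essentially the same change-of-variables strategy as the paper: both compute the Jacobian of the radial projection $y\mapsto (y-x_0)/|y-x_0|$ from $E$ onto $H^d(E,x_0)$ and track how the factor $|x_0-y|^{-(d+\alpha)}$ transforms. The only organizational difference is that the paper parametrizes both $E$ and $H^d(E,x_0)$ through the unit ball $U^d\subset\R^d$ (computing two separate Jacobians $V(D\Psi)=(1-|x|^2)^{-d/2-1}$ and $V(D\Phi)=(1-|x|^2)^{-1/2}$, then combining them), whereas you parametrize $H^d(E,x_0)$ directly by $v\in E$ and compute a single Gram determinant via the matrix determinant lemma; your $\lambda$ is the paper's $\sqrt{1-|x|^2}$ under the obvious identification $v_i=x_i/\sqrt{1-|x|^2}$, and the final exponent $\alpha-1$ emerges identically.
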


\begin{proof}
      By translation invariance, assume that $x_0=0$. After pre-composing with a harmless rotation we may also assume that $E$ is given by the plane $\{(y_1, y_2, \dotsc, y_d, 0, 0, \dotsc, 0, -1)\} \subset \R^n$. Consider the map $\Psi$ from the open $d$-ball in $\R^n$,
      \begin{align*}
          U^d & = \{ x = (x_1, x_2, \dotsc, x_d, 0) \; : \; x \in \R^n, |x| < 1 \}
      \end{align*}
      to the plane $E$ given by
      \begin{align*}
          (x_1, x_2, \dotsc, x_d, 0) & \ra \dfrac{\tilde{x}}{\sqrt{1 - |x|^2}}, \\
          \tilde{x} & = (x_1, x_2, \dotsc, x_d, 0, \dotsc, 0, - \sqrt{1 - |x|^2}).
      \end{align*}
Notice that $\tilde{x} \in H^d(E, x)$ since $\tilde{x} \in \sphere^{n-1}$ and $\lambda \tilde{x} + x \in E$ for $\lambda = \left( \sqrt{1-|x|^2} \right)^{-1}$. Moreover, it is easy to check that this map $\Psi$ is a bijective mapping from $U^d$ to $E$. Viewing $E$ as a parametrized $d$-manifold, we can compute $R_K(0)$ as an integral over $U^d$. To do this though, we need to compute
\begin{align*}
V(D\Psi(x)) & \equiv \left |\det \left( D\Psi(x)^T D\Psi(x) \right) \right|^{1/2}.
\end{align*}
One can easily check that $V(D \Psi(x))$ is identically equal to $|\det A(x)|$ for the $d \times d$ matrix $A = (a_{ij}(x))$ given by
\begin{align*}
a_{ij}(x) & = \left( \dfrac{\delta_{ij}}{(1 - |x|^2)^{1/2}} + \dfrac{x_i x_j}{(1 - |x|^2)^{3/2}} \right) \\
    & =\dfrac{1}{(1 - |x|^2)^{3/2}} \left( \delta_{ij}(1 - |x|^2) + x_i x_j  \right)   \equiv \dfrac{1}{(1 - |x|^2)^{3/2}}\, b_{ij}(x).
\end{align*}
In addition, $B(x)= (b_{ij}(x))$ is diagonalizable with eigenvalue $1$ with multiplicity $1$, and eigenvalue $(1-|x|^2)$ with multiplicity $d-1$. Hence 
\begin{align*}
V(D \Psi(x)) & = \left | \det A(x) \right| = \left | \det \dfrac{1}{(1 - |x|^2)^{3/2}} \,B(x) \right | \\
& = (1 - |x|^2)^{-d(3/2)} (1- |x|^2)^{d-1}  = (1 -|x|^2)^{-d/2 - 1}.
\end{align*}
 Note if $\gamma$ is the angle between $\tilde{x}$ and $-e_n$, then $ \cos(\gamma)  = \tilde{x} \cdot (-e_n) = \sqrt{1 - |x|^2}$. Thus, viewing $E$ as a parametrized manifold, we have that 
      \begin{align*}
          \int_E \dfrac{K(0-y)}{|0-y|^{d+\alpha}} \; d\HD^d (y) & = \int_{U^d} K(-\tilde{x}) \left( \sqrt{1 - |x|^2}\right)^{d+\alpha} V(D \Psi(x)) \; d \HD^d(x) \\
              & = \int_{U^d} K(-\tilde{x}) \left( \sqrt{1 - |x|^2} \right)^{\alpha - 2} \; d\HD^d (x) \\
          & = \int_{U^d} K(-\tilde{x}) ( \tilde{x} \cdot (-e_n))^{\alpha -2 } \; d \HD^d(x).
       \end{align*}
      Finally, we rewrite this last integral as the integral over the parametrized manifold $H^d(E, 0)$. Recall that in this case the map $\Phi: U^d \ra H^d(E, x)$ that takes $x \ra \tilde{x}$ has
\begin{align*}
    V(D \Phi (x))     = 1/ \sqrt{ 1 - |x|^2}   = (\tilde{x} \cdot (-e_n))^{-1}.
\end{align*}
Altogether we have that 

      \begin{align*}
\int_E \dfrac{K(0-y)}{|0-y|^{d+\alpha}} \; d\HD^d (y) & =\int_{U^d} K(-\tilde{x}) ( \tilde{x} \cdot (-e_n))^{\alpha -2 } \; d \HD^d(x) \\
& = \int_{U^d} K(-\tilde{x}) ( \tilde{x} \cdot (-e_n))^{\alpha - 1 } V(D\Phi(x))  \; d \HD^d(x) \\
& = \int_{H^d(E, x)} K(-z) \left( z \cdot (-e_n) \right)^{\alpha - 1} \; d\HD^d(z). 
      \end{align*}
\end{proof}

With this change of variables, we obtain a more concrete characterization of $(d, \alpha)$-distance-exact kernels in the scale-invariant setting. Geometrically, the following statement says that the distance-exact kernels (outside $E$) are precisely those functions on the sphere whose weighted average over a certain collection of half $d$-arcs is constant.

\begin{thm}
Let $n, d \in \N, \alpha >0$ with $d < n$, and let $E \in G(n,d)$. Then a kernel $K \in L^\infty(\sphere^{n-1})$ is $(d,\alpha)$-distance-exact for $E$ if and only if 
\begin{align}
   \int_{H^d(E, x_0)}K(-A w)(w \cdot w_0)^{\alpha - 1} d\HD^d(w)= \int_{H^d(E, x_0)}K(-w)(w \cdot w_0)^{\alpha - 1} d \HD^d(w) \label{ainv}
\end{align} 
for any $x_0 \not \in E$ with $\delta_E(x_0) =1$, and every orthogonal transformation $A \in O^n$ such that $A(E) = E$. Here, as above, $w_0 = (x_0 - P_E(x_0))/|x_0-P_E(x_0)|$. \label{thm:wnda}
\end{thm}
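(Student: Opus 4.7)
The plan is to unwind both sides of \eqref{ainv} as values of $R_K$ at two related points via Lemma \ref{cov}, and then exploit the natural symmetries of $R_K$ (translation invariance along $E$ and homogeneity).

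First, by Lemma \ref{cov} applied at $x_0$, the right-hand side of \eqref{ainv} is exactly $R_K(x_0)$. For the left-hand side, I would perform the change of variables $w \mapsto A w$, which preserves $\HD^d$ because $A$ is an isometry. Since $A(E) = E$ and $A$ is linear, one checks that
\[
A \cdot H^d(E, x_0) = H^d(AE, A x_0) = H^d(E, A x_0) \qquad \text{and} \qquad A w_0(x_0) = w_0(A x_0),
\]
where the second identity uses $P_E \circ A = A \circ P_E$ on $\R^n$ (a consequence of $A(E) = E$ and $A(E^\perp) = E^\perp$). Noting that $\delta_E(A x_0) = 1$ as well, another application of Lemma \ref{cov} identifies the transformed integral as $R_K(A x_0)$. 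Hence \eqref{ainv} is equivalent to the statement that $R_K(A x_0) = R_K(x_0)$ for every such $x_0$ and every $A \in O(n)$ with $A(E) = E$.

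Second, I would record two automatic symmetries of $R_K$ which hold for any $0$-homogeneous $K \in L^\infty(\sphere^{n-1})$: (i) by the defining integral and translation invariance of $\HD^d|_E$, $R_K$ is invariant under translations in directions parallel to $E$; (ii) since $K$ is $0$-homogeneous, a change of variables shows $R_K$ is $(-\alpha)$-homogeneous on $\R^n \setminus E$. Together these two properties imply that $K$ is $(d,\alpha)$-distance-exact for $E$ if and only if $R_K$ is constant on the cylinder $\{x : \delta_E(x) = 1\}$.

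Finally, the subgroup of $O(n)$ fixing $E$ contains $O(E^\perp)$, which acts transitively on $E^\perp \cap \sphere^{n-1}$. Combined with the translation invariance along $E$ from the previous paragraph, one checks that any two points $x_0, x_0'$ at distance $1$ from $E$ can be linked by first translating within $E$ and then applying some $A \in O(E^\perp)$ to match the perpendicular component. Hence the condition $R_K(A x_0) = R_K(x_0)$ for all $x_0 \in \{\delta_E = 1\}$ and all $A$ fixing $E$ is equivalent to $R_K$ being constant on $\{\delta_E = 1\}$, which by the previous step is equivalent to distance-exactness for $E$. There is no substantive obstacle; the only point requiring care is the bookkeeping in the change of variables, namely the two identities relating $H^d(E, x_0)$ and $w_0$ under $A$.
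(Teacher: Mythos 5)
Your proposal is correct and follows essentially the same route as the paper's proof: apply Lemma \ref{cov}, use the identities $A_\sharp H^d(E,x_0)=H^d(E,Ax_0)$ and $Aw_0(x_0)=w_0(Ax_0)$ to rewrite the left side as $R_K(Ax_0)$, and then combine translation invariance along $E$ with homogeneity and the transitivity of the stabilizer of $E$ on the unit cylinder. The only cosmetic difference is that you make the transitivity step fully explicit where the paper compresses it into ``we need only consider such $x$.''
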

\begin{proof}

     Fix the parameters $n,d,\alpha, E,$ as in the statement of the theorem and let $x_0 \not \in E$ be a point with $\delta_E(x_0) = 1$. Recall that since $E$ is flat we have $D_{K, E}$ is $1$-homogeneous. In particular, this implies that $K$ is $(d,\alpha)$-distance-exact for $E$ if and only if for any point $x \not \in E$ with $\delta_E(x) = \delta_E(x_0)$, we have $D_{K,E}(x) = D_{K, E}(x_0)$, or equivalently, $R_{K, E}(x) = R_{K, E}(x_0)$. By virtue of Lemma \ref{cov}, this is true if and only if 
\begin{align}
    \int_{H^d(E, x_0)} K(-w) (w \cdot w_0(x_0))^{\alpha - 1} \; d\HD^d(w) & = \int_{H^d(E, x)} K(-w) (w \cdot w_0(x))^{\alpha - 1} \; d\HD^d(w) \label{condrota}
\end{align}
for every such $x$.

Now, since $E$ is flat, $R_{K,E}$ is translation invariant with respect to vectors parallel to $E$. Hence we need only consider such $x$ with $\delta_E(x) = \delta_E(x_0)$ and such that there is an orthogonal transformation $A$ for which $Ax_0 = x$ and $A(E) = E$. Since $A_\sharp H^d(E, x_0) = H^d(E,x)$, changing variables in the right-hand side of (\ref{condrota}) (and using that $A$ is orthogonal), we see that $K$ is $(d,\alpha)$-distance-exact for $E$ if and only if 
\begin{align*}
      \int_{H^d(E, x_0)} K(-A w) (w \cdot w_0(x_0))^{\alpha - 1} \; d\HD^d(w) & = \int_{H^d(E, x_0)} K(-w) (w \cdot w_0(x_0))^{\alpha - 1} \; d\HD^d(w)
\end{align*}
for any orthogonal transformation $A$ that fixes $E$.
\end{proof}

We end with several observations on the (non-)existence of homogeneous distance exact kernels in higher co-dimension. First we observe that there are many of them in co-dimension 1. 

\begin{cor}
All even kernels give rise to $(n-1, \alpha)$-distance exact kernels for any $\alpha > 0$. Furthermore, if $\alpha = 1$ the constant $c_E$ can be taken independent of the plane $E$. \label{rmk:dist_exact_codim_1}
\end{cor}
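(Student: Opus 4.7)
The plan is to apply the characterization in Theorem \ref{thm:wnda} directly. Fix a hyperplane $E\in G(n,n-1)$ and a point $x_0\notin E$ with $\delta_E(x_0)=1$; by a rotation we may normalize $E=\{x_n=0\}$ and $x_0=e_n$, so that $w_0=-e_n$ and $H^{n-1}(E,x_0)$ is the open lower hemisphere $H=\{w\in\sphere^{n-1}:w\cdot e_n<0\}$. Any orthogonal transformation $A$ with $A(E)=E$ must also preserve $E^\perp=\mathbb R e_n$, so it splits as $A=(R,\varepsilon)$ on $E\oplus E^\perp$ with $R\in O(n-1)$ and $\varepsilon\in\{+1,-1\}$. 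I will verify condition \eqref{ainv} for each choice of $\varepsilon$ separately.

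When $\varepsilon=+1$, the map $A$ fixes $x_0$, $w_0$, and the hemisphere $H$. Writing the left-hand side of \eqref{ainv} and changing variables $v=Aw$, orthogonality gives $A^{-1}v\cdot w_0=v\cdot Aw_0=v\cdot w_0$, so the identity follows with no use of evenness. When $\varepsilon=-1$, $A$ sends $H$ to the opposite hemisphere $-H$ and $w_0$ to $-w_0$. Changing variables $v=Aw$ and then $u=-v$, together with the assumption that $K$ is even (so $K(-Aw)=K(v)$ and $K(-u)=K(u)$), converts the left-hand side of \eqref{ainv} into the right-hand side. The main care here is bookkeeping the two sign flips (one from $\varepsilon=-1$ and one from the $-Aw$ in the statement of \eqref{ainv}), but they combine to preserve the integrand on $H$.

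For the final assertion, specialize to $\alpha=1$: the weight $(w\cdot w_0)^{\alpha-1}\equiv 1$, so
\[
R_{K,E,1}(x_0)=\int_{H^{n-1}(E,x_0)}K(-w)\,d\HD^{n-1}(w)=\int_{H'}K(u)\,d\HD^{n-1}(u),
\]
where $H'=-H^{n-1}(E,x_0)$ is a hemisphere of $\sphere^{n-1}$. Because $K$ is even, the integral of $K$ over any hemisphere equals $\tfrac12\int_{\sphere^{n-1}}K\,d\HD^{n-1}$, which manifestly does not depend on the choice of $E$. Hence $c_E=(R_{K,E,1}(x_0))^{-1}$ is a constant independent of $E$, as claimed.

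The only genuinely delicate step is the second case above; it is not really a calculation but a careful tracking of where the minus signs go, and this is where I expect the bulk of the write-up to live.
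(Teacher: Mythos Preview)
Your proof is correct and follows the same approach as the paper: both apply Theorem \ref{thm:wnda} and observe that in codimension one the condition \eqref{ainv} reduces to
\[
\int_{H} K(-w)(w\cdot w_0)^{\alpha-1}\,d\HD^{n-1}(w)=\int_{H} K(w)(w\cdot w_0)^{\alpha-1}\,d\HD^{n-1}(w),
\]
which holds automatically for even $K$, and for $\alpha=1$ the hemisphere integral is visibly independent of $E$. The paper states this reduced identity directly rather than splitting into your $\varepsilon=\pm1$ cases, but the content is the same; note that in your $\varepsilon=-1$ step the two substitutions already give $K(-Aw)=K(u)$ identically (since $u=-v=-Aw$), so evenness is in fact used only once, when matching with the right-hand side.
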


\begin{proof} 
When $d = n-1$, the condition \eqref{ainv} is equivalent to
     \begin{align*}
         \int_{H^{n-1}(E, x_0)} K(-w) (w \cdot w_0)^{\alpha-1} \; d\HD^{n-1}(w) & = \int_{H^{n-1}(E, x_0)}K(w) (w \cdot w_0)^{\alpha-1} \; d\HD^{n-1}(w).
     \end{align*}
     In particular, any \textit{even} kernel $K \in L^\infty(\sphere^{n-1})$ satisfies this condition, and thus any such kernel will be $(d,\alpha)$-distance-exact. Moreover, in the case $\alpha = 1$, the integral is independent of the particular $E$ chosen.
\end{proof}

On the other hand, in higher co-dimension there do not exist non-trivial distance-orthogonal kernels which are $0$-homogeneous. The authors would like to thank Dmitriy Bilyk for pointing out the connection of the integral conditions on distance-exact kernels on $\sphere^2$ to the Funk Transform, which gives rise to the following result.

\begin{cor}
In $n =3$, every $0$-homogeneous $(1, 1)$-distance orthogonal kernel is identically equal to zero. \label{rmk:dist_exact_codim_2}
\end{cor}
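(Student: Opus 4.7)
The plan is to reduce the $(1,1)$-distance-orthogonality of a $0$-homogeneous $K$ on $\R^3$ to a vanishing condition for integrals of $K$ over half great circles of $S^2$, and then to invoke the classical injectivity of the Funk transform.

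First, I would translate the orthogonality hypothesis into a condition on $S^2$. For any affine line $E \subset \R^3$ with $\delta_E(0) = 1$, write $E = \{a + tv : t \in \R\}$ with $a = P_E(0)$ and $v$ a unit direction of $E$, so $a \perp v$ and $|a| = |v| = 1$. Since $d + \alpha = 2$, the $0$-homogeneity of $K$ together with the substitution $t = \tan\theta$ yields
\begin{equation*}
0 \; = \; R_{K,E,1}(0) \; = \; \int_{-\infty}^{\infty} \frac{K(-(a+tv))}{1+t^2}\, dt \; = \; \int_{-\pi/2}^{\pi/2} K\bigl(-(\cos\theta\, a + \sin\theta\, v)\bigr)\, d\theta.
\end{equation*}
The map $\theta \mapsto -(\cos\theta\, a + \sin\theta\, v)$ traces a half great circle of $S^2$, and as $(a,v)$ ranges over all orthonormal pairs in $\R^3$ every half great circle arises in this way. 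Hence the hypothesis is equivalent to
\begin{equation*}
\int_H K\, d\HD^1 \;=\; 0 \qquad \text{for every half great circle } H \subset S^2.
\end{equation*}

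Next, I would extract two consequences of this. Since each great circle $C$ decomposes as the disjoint union of two half great circles, $\int_C K = 0$ for every great circle $C$; that is, the Funk transform of $K$ vanishes identically. Separately, fix a great circle $C$, parametrize it by arc length $c:[0,2\pi]\to C$, and set $f(\theta) := K(c(\theta))$; the hypothesis says that $F(\alpha) := \int_\alpha^{\alpha+\pi} f(\theta)\, d\theta \equiv 0$ in $\alpha$. Differentiating in $\alpha$ gives $f(\alpha+\pi) = f(\alpha)$, and since $c(\alpha+\pi) = -c(\alpha)$, this says that $K$ restricted to $C$ is even. Since every point of $S^2$ lies on some great circle, $K$ is even on $S^2$. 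A continuous even function on $S^2$ whose Funk transform vanishes must be identically zero, by Theorem 4.1 of \cite{HELGASON} referenced in the acknowledgments. Hence $K \equiv 0$.

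The conceptual observation, and the only real hurdle, is that $(1,1)$-distance-orthogonality of a $0$-homogeneous kernel is strictly stronger than mere vanishing of the Funk transform: it yields vanishing on every half of every great circle, which is exactly what is required to kill the odd part of $K$ (the odd functions being precisely the kernel of the Funk transform). Once this is recognized, the change of variables above is the only calculation, and each remaining step is essentially one line.
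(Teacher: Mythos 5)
Your proof is correct and follows essentially the same route as the paper's: reduce $(1,1)$-distance-orthogonality to the vanishing of $\int_H K$ over every half great circle (the paper does this via its general change-of-variables Lemma \ref{cov}, you via the equivalent direct $t=\tan\theta$ substitution), deduce that $K$ is even and that its Funk transform vanishes, and conclude by the injectivity of the Funk transform on even continuous functions. No gaps.
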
 

\begin{proof}
Suppose that $K \in C(\sphere^2), \alpha =1$, and $K$ is $(1, 1)$-distance-orthogonal. Then Theorem \ref{thm:wnda}  implies that
\begin{align}
\int_{H^1(E,x_0)} K(w) \; d\HD^1(w) = 0 \label{half_1_arcs}
\end{align}
for each half $1$-arc $H^1(E, x_0) \subset \sphere^2$, from which one can deduce that $K$ is necessarily even. To see this, simply restrict the half $1$-arcs we consider to be contained in the same great circle of $\sphere^2$. Then abusing notation and writing $K$ for the restriction of $K$ to this circle in polar coordinates, we obtain that for each $\theta \in \R$,
\begin{align*}
\int_\theta^{\pi + \theta} K(\omega) d \omega = 0.
\end{align*}
It then follows that (again, abusing notation), $K(\theta) = K(\pi + \theta)$  for each $\theta$. Lifting this to our original kernel implies that $K$ is even. 

Now from here, using \eqref{half_1_arcs} applied to the half $1$-arcs $H^1(E, x_0)$ and $-H^1(E, x_0)$ implies 
\begin{align*}
\int_C K(w) \; d\HD^d(w) = 0
\end{align*}
for each great circle $C \subset \sphere^2$. If $\mathcal{F}$ is the Funk transform on $\sphere^2$, then this says exactly that $\mathcal{F}(K) \equiv 0$. Since this transform is invertible on even continuous functions (see for example, Chapter 2 in \cite{HELGASON} for references and the definition of the Funk transform), we therefore conclude that $K \equiv 0$.
\end{proof}

\bibliographystyle{alpha}
\bibliography{bibl}
\end{document}